\tikzset{double line with arrow/.style args={#1,#2}{decorate,decoration={markings,%
mark=at position 0 with {\coordinate (ta-base-1) at (0,1pt);
\coordinate (ta-base-2) at (0,-1pt);},
mark=at position 1 with {\draw[#1] (ta-base-1) -- (0,1pt);
\draw[#2] (ta-base-2) -- (0,-1pt);
}}}}
\DeclareRobustCommand{\SkipTocEntry}[5]{}
\newcommand{\numberset}{\mathbb}
\newcommand{\N}{\numberset{N}} 
\newcommand{\Z}{\numberset{Z}} 
\newcommand{\R}{\numberset{R}}
\let\C\relax
\newcommand{\C}{\numberset{C}}
\newcommand{\T}{\numberset{T}}
\newcommand{\cK}{\mathcal{K}}
\newcommand{\G}{\mathcal{G}}
\newcommand{\cH}{\mathcal{H}}
\newcommand{\bu}{\mathbf{u}}
\newcommand{\bt}{\mathbf{t}}
\newcommand{\bn}{\mathbf{n}}
\DeclareMathOperator{\Chern}{Ch}
\DeclareMathOperator{\Aut}{Aut}
\DeclareMathOperator{\KKK}{KK}
\newcommand{\hatotimes}{\widehat{\otimes}}
\newcommand{\hot}{\,\hatotimes\,}
\DeclareMathOperator{\Ind}{Ind}
\theoremstyle{plain}
\newtheorem{theoremA}{Theorem}
\newtheorem{corollaryA}[theoremA]{Corollary}
\newtheorem*{question*}{Question}
\newtheorem*{convention*}{Convention}
\newtheorem*{assumption*}{Assumption}
\newtheorem*{assumptions*}{Assumptions}
\author{Hao Guo}
\address{Yau Mathematical Sciences Center, Tsinghua University}
\email{haoguo@mail.tsinghua.edu.cn}
\author{Valerio Proietti}
\address{Graduate School of Mathematical Sciences, The University of Tokyo, Japan}
\curraddr{Department of Mathematics, University of Oslo, P.O. box 1053, Blindern, 0316 Oslo, Norway}
\email{valeriop@math.uio.no}
\author{Hang Wang}
\address{School of Mathematical Sciences, East China Normal University}
\email{wanghang@math.ecnu.edu.cn}
\title[A geometric Elliott invariant and noncommutative rigidity]{A geometric Elliott invariant and noncommutative rigidity of mapping tori}
\begin{document}
%******************************************************************
% Beginning
%******************************************************************
\frontmatter

\begin{abstract}
We prove a rigidity property for mapping tori associated to minimal topological dynamical systems using tools from noncommutative geometry. More precisely, we show that under mild geometric assumptions, a leafwise homotopy equivalence of two mapping tori associated to $\Z^d$-actions on a compact space can be lifted to an isomorphism of their foliation $C^*$-algebras. This property is a noncommutative analogue of topological rigidity in the context of foliated spaces whose space of leaves is singular, where isomorphism type of the $C^*$-algebra replaces homeomorphism type. Our technique is to develop a geometric approach to the Elliott invariant that relies on topological and index-theoretic data from the mapping torus. We also discuss how our construction can be extended to slightly more general homotopy quotients arising from actions of discrete cocompact subgroups of simply connected solvable Lie groups, as well as how the theory can be applied to the magnetic gap-labelling problem for certain Cantor minimal systems.
\end{abstract}

\maketitle
\setcounter{tocdepth}{2}
\vspace*{-1ex}     %toc on first page
\tableofcontents
%\listoffigures

%******************************************************************
% Main
%******************************************************************
\mainmatter

\addtocontents{toc}{\SkipTocEntry}\section*{Introduction}
%\label{sec:ipsum}

It is a fundamental question in topology whether a given homotopy equivalence can be lifted to a homeomorphism (e.g. the Borel conjecture). Such rigidity questions are notoriously difficult, and in certain geometric situations it is more appropriate to replace the notion of homeomorphism by isomorphism in a different category. In this paper, we study a noncommutative-geometric analogue of this problem for a class of foliated spaces, namely mapping tori associated to certain minimal topological dynamical systems. The minimality of such systems means that the leaf space of the foliation is singular, and a natural approach from noncommutative geometry is to work with the isomorphism type of the foliation $C^*$-algebra.

Before formulating the problem, let us provide some background on the systems we will work with. Let $X$ be a compact metrizable space and $G$ a discrete group. Let
$$\alpha\colon G\to \mathrm{Homeo}(X)$$ 
be an action that is free
%(i.e. if there is $x\in X$ such $\alpha(g)(x)=x$, then $g$ is the identity)
and minimal (meaning each $G$-orbit is
dense in $X$). We refer to the data $(X,G,\alpha)$ as a \emph{minimal topological dynamical system}. Minimality implies that the orbit space $X/G$ is a pathological space whose topology consists of exactly two open sets. Yet systems of this type often appear in physical settings, so it is important to find a framework in which it is possible to calculate analogues of topological invariants and formulate the appropriate versions of classical topological questions. One replacement for the space $X/G$ provided by algebraic topology is the homotopy quotient (or the Borel construction) $Y_\alpha=X\times_{G} EG$. On the other hand, by virtue of the Gelfand-Naimark duality, noncommutative geometry provides another possibility, namely the (reduced) crossed product $C^*$-algebra 
$$A_\alpha=C(X)\rtimes_{\alpha} G.$$

Motivated by this, we formulate and prove a version of the rigidity question in the case $G=\Z^d$ and $EG=\R^d$. In this case $Y_\alpha$, known as the \emph{mapping torus} for the $\Z^d$-action, is a foliated space whose space of leaves can be identified with $X/G$. The algebra $A_\alpha$ is Morita equivalent to the $C^*$-algebra of the foliation. We prove the following (see Theorem \ref{thm:lift} in the text for details):

\begin{theoremA}
\label{thm main1}
Let $X$ be a compact Hausdorff space with finite covering dimension endowed with two free and minimal $\Z^d$-actions $\alpha$ and $\beta$. Denote by $Y_\alpha$ and $Y_\beta$ the mapping tori associated to $(X,\alpha)$ and $(X,\beta)$, respectively. Then any basic leafwise homotopy equivalence $Y_\alpha\cong Y_\beta$ can be lifted to an isomorphism of $C^*$-algebras $A_\alpha\cong A_\beta$.
\end{theoremA}

Our approach to Theorem \ref{thm main1} uses $K$-theory and foliation index theory, together with results from the classification of $C^*$-algebras, to construct a geometric analogue of the Elliott invariant for the class of $C^*$-algebras we work with. Recall that the Elliott invariant is a fundamental construction in classification theory that controls the isomorphism types of many nuclear $C^*$-algebras, including the algebras $A_\alpha$ and $A_\beta$ from Theorem \ref{thm main1} (see Theorem \ref{thm:classification} and the references given there). 

In general, for $C^*$-algebras that arise from an underlying geometric or dynamical system, it is conceivable that the Elliott invariant can be recovered from geometric data. In pursuing this goal, we are led to build a dictionary of sorts between geometric properties of the mapping torus and $C^*$-algebraic properties of the associated crossed product. We hope in the process to shed some light on the interaction between noncommutative geometry and operator algebras in the context of classification.

We use the foliation index theorem to establish a relationship between trace values that form part of the Elliott invariant and topological data associated to the foliation, such as characteristic numbers obtained by integrating tangential cohomology classes. For the $C^*$-algebras under consideration, we are able to construct a geometric invariant that is analogous to the Elliott invariant, as summarised by the following (Theorem \ref{thm:classres} in the text):

\begin{theoremA}
\label{thm main2}
The crossed products $C(X)\rtimes_{\alpha}\Z^d$, possibly twisted by a $\Z^d$-cocycle, are classified by the following data, taken together:
\begin{itemize}[$\bullet$]
\item The topological $K$-theory of the mapping torus $Y_\alpha$ associated to $(X,\Z^d,\alpha)$;
\item The $K$-theory map induced by ``inclusion of the fundamental domain'' inside $Y_\alpha$;
\item The structure of the top tangential de Rham homology group of $Y_\alpha$;
\item The index pairing of the leafwise Dirac operator twisted by bundles over $Y_\alpha$.
\end{itemize}
\end{theoremA}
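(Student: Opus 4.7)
The plan is to show that the four items listed reconstruct the Elliott invariant of $A_\alpha$ (and of its cocycle-twisted variants), so that the classification theorem for $\mathcal{Z}$-stable, finite-nuclear-dimensional $C^*$-algebras becomes applicable. Under the hypotheses of finite covering dimension and free minimal action, $A_\alpha$ lies in the classifiable class, hence its isomorphism type is detected by the sextuple $(K_0,K_0^+,[1],K_1,T,\rho)$, with $\rho\colon K_0\times T\to\R$ the trace pairing. My task is thus to recover each of these pieces from the listed geometric data.

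For the $K$-groups, iterated Connes--Thom isomorphisms (or equivalently Baum--Connes with coefficients for $\Z^d$) identify $K_*(A_\alpha)$ with $K^*(Y_\alpha)$; in the cocycle-twisted case, the right-hand side is replaced by the corresponding twisted topological $K$-theory determined by the Dixmier--Douady class of the cocycle. This is bullet one. For the class of the unit: under the above identification, $[1_{A_\alpha}]$ equals the image of $[1]\in K^0(X)$ under the transversal inclusion $X\hookrightarrow Y_\alpha$ coming from a choice of fundamental domain, so recording this map (bullet two) pins down $[1_{A_\alpha}]$ inside $K_0$.

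Next, the trace simplex $T(A_\alpha)$ is canonically isomorphic to the simplex of $\Z^d$-invariant probability measures on $X$, equivalently to the simplex of transverse invariant measures for the $\R^d$-foliation on $Y_\alpha$. Ruelle--Sullivan then identifies the latter with the positive part of the top tangential de Rham homology of $Y_\alpha$, giving bullet three. The pairing $\rho$ is computed via Connes's measured index theorem: for any complex vector bundle $E\to Y_\alpha$ and the leafwise Dirac operator $D$ twisted by $E$,
\begin{equation*}
\tau\bigl(\Ind D_E\bigr)\;=\;\int_{Y_\alpha}\Chern(E)\,\widehat{A}(F)\,d\mu_\tau,
\end{equation*}
where $F$ denotes the leafwise tangent bundle and $\mu_\tau$ is the transverse measure corresponding to $\tau$. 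Combined with bullet three, this expresses $\rho$ in terms of the data of bullet four. The order structure $K_0^+$ is then recovered from $\rho$ via strict comparison of positive elements by traces, which is available thanks to $\mathcal{Z}$-stability.

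The main obstacle is to ensure that the index pairings of twisted leafwise Dirac operators determine $\rho$ on \emph{all} of $K_0$, and not merely on a rationally dense subset. After tensoring with $\Q$, the leafwise Chern character identifies $K^*(Y_\alpha)\otimes\Q$ with tangential (combined with transverse) cohomology, and symbols of twisted Dirac operators realise a $\Q$-spanning family of classes; since $\rho$ takes values in the torsion-free group $\R$, rational information suffices, and the measured index theorem then supplies the computation on all of $K_0$. In the cocycle-twisted case one works instead with the projective leafwise Dirac operator and the twisted Chern character, with the Dixmier--Douya class entering the index formula as an additional factor; the verification that this twisted version of Connes's theorem still exhausts the trace pairing is the most delicate step of the argument.
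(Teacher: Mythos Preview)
Your overall strategy coincides with the paper's: establish classifiability, then match each ingredient of the Elliott invariant with one of the geometric data. The identifications you give for $K_*$, the order unit, the trace simplex (via Ruelle--Sullivan), and the recovery of $K_0^+$ from $\rho$ by strict comparison are all the ones the paper uses.

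The difficulty lies in your final paragraph, which contains a genuine misstep. You frame the ``main obstacle'' as whether twisted Dirac indices see all of $K_0$ or only a rationally dense part, and then propose to resolve it by invoking that the tangential Chern character is a rational isomorphism. But this last claim is \emph{false} in general for foliated spaces (the paper explicitly notes this; indeed Theorem~\ref{thm A} has rational surjectivity of $\Chern_\tau$ as a nontrivial hypothesis). More importantly, the obstacle is illusory: the paper's Proposition~\ref{prop:ctdirbc} shows that the Connes--Thom isomorphism $t\colon K^d(Y)\to K_0(C^*(\tilde\G))$ is \emph{equal} to the map $[E]\mapsto \Ind(P_E)$. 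Since $t$ is an integral isomorphism, every class in $K_0$ is literally a twisted Dirac index, and the trace pairing is computed exactly (not just rationally) by the measured index formula. This is the key observation you are missing, and without it your rational argument does not close the gap.

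Your treatment of the twisted case is also off. The cocycle $\sigma$ here is a group $2$-cocycle on $\Z^d$ homotopic to the trivial one, so $K_*(A^\Theta)\cong K_*(A)$ directly (Packer--Raeburn, as in the paper's \eqref{eq twisted untwisted}); there is no Dixmier--Douady class and no twisted topological $K$-theory in play. The modification for $\Theta\neq 0$ occurs only in the index formula, where the tangential Chern character is corrected by Pfaffians of submatrices of $\Theta$ (the paper's Subsection~\ref{subsec:twist}), not by a Dixmier--Douady factor.
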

The terminology used here is explained in Section \ref{sec:two} and Section \ref{sec:index}.

The techniques that underlie Theorem \ref{thm main2} can be used to establish a number of other related results. For example, we prove the following (Theorem \ref{thm:class} in the text):

\begin{theoremA}
\label{thm A}
Suppose the Chern character $K^*(Y_\alpha)\to H^*_\tau(Y_\alpha,\R)$ with values in tangential cohomology is surjective after tensorization with $\R$. Then the crossed products $C(X)\rtimes_{\alpha}\Z^d$ are classified by ordered $K$-theory.
\end{theoremA}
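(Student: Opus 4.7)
The plan is to reduce to the classification statement of Theorem~\ref{thm main2} and argue that, under the rational surjectivity hypothesis, all four pieces of data appearing there are recoverable from the ordered K-theory of $A_\alpha = C(X)\rtimes_\alpha\Z^d$. First I would identify $K_*(A_\alpha)$ with $K^*(Y_\alpha)$ via Connes--Thom (or equivalently Takai duality), and observe that the ``fundamental domain'' inclusion map appearing in Theorem~\ref{thm main2} is canonically induced by the inclusion $C(X)\hookrightarrow A_\alpha$. Hence the first two pieces of Theorem~\ref{thm main2} are already intrinsic to the K-theory of $A_\alpha$.

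Next I would use the finite covering dimension assumption together with the classification machinery underlying Theorem~\ref{thm main2} to conclude that $A_\alpha$ lies in the classifiable range and in particular has strict comparison of positive elements. In this regime the positive cone $K_0^+(A_\alpha)$ is determined by $\{x:\tau(x)>0 \text{ for all tracial states } \tau\}$, so the ordered K-theory records the full collection of trace-pairing functionals $\tau\colon K_0(A_\alpha)\to \R$. Tracial states on $A_\alpha$ correspond bijectively to $\Z^d$-invariant Borel probability measures $\mu$ on $X$, and the Connes foliation index theorem identifies the corresponding pairing $\tau_\mu([u])$ with the integral of the tangential Chern character $\Chern(u)$ against the Ruelle--Sullivan current $[C_\mu]$ of $\mu$, a class in the top tangential de Rham homology of $Y_\alpha$. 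Thus the ordered K-theory encodes precisely the tangential index pairing that constitutes the fourth piece of data of Theorem~\ref{thm main2}.

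The final step invokes the rational surjectivity hypothesis to reconstruct the third piece of data, namely the top tangential de Rham homology. Since $\Chern\otimes \R \colon K^*(Y_\alpha)\otimes \R \to H^*_\tau(Y_\alpha,\R)$ is surjective, any linear functional on $H^*_\tau(Y_\alpha,\R)$ is determined by its restriction along the Chern character to $K^*(Y_\alpha)\otimes \R$. In particular, every Ruelle--Sullivan class $[C_\mu]$ is determined by its pairing with $\Chern$, hence by the trace-pairing data already encoded in the ordered K-theory. Combining these observations with Theorem~\ref{thm main2} yields the conclusion.

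The hardest point will be the passage from trace values back to a genuine class in the top tangential homology. Rationally, surjectivity of the Chern character makes this essentially an exercise in linear duality, but one must control torsion phenomena in the Chern character (invisible over $\R$) and verify that the trace-to-state map $T(A_\alpha)\to S(K_0(A_\alpha))$ is injective with image canonically determined by the order. Both issues hinge on the simplicity of $A_\alpha$, the explicit identification of traces with invariant measures, and a careful use of the Connes foliation index theorem in the spirit of the methods already developed elsewhere in the paper.
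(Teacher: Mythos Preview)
Your core insight---that rational surjectivity of the tangential Chern character forces distinct invariant measures to give distinct functionals on $K$-theory, via the foliated index theorem and the Ruelle--Sullivan isomorphism---is exactly what the paper isolates as Proposition~\ref{prop K thy sep traces}. So the heart of the matter is the same in both approaches.

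Where you diverge is in what you do with that insight. The paper does not try to rebuild the geometric Elliott invariant of Theorem~\ref{thm main2} from ordered $K$-theory. Instead, once it knows that $K_0(A_\alpha)$ separates traces, it invokes an external classification result (\cite[Theorem~1.3]{win:classprod}, together with the finite-decomposition-rank input from \cite{tww:quasidiag,cetww:nucdim}) which says directly that, in this regime, ordered $K$-theory is a complete invariant. This is a one-line finish.

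Your route through Theorem~\ref{thm main2} runs into two difficulties. First, the claim that the map $\iota_*$ is ``intrinsic to the $K$-theory of $A_\alpha$'' is not right as stated: $\iota_*$ depends on the inclusion $C(X)\hookrightarrow A_\alpha$, which is data attached to the \emph{algebra}, not to the abstract ordered group $(K_0,K_0^+,[1],K_1)$. An arbitrary isomorphism of ordered $K$-theory between $A_\alpha$ and $A_\beta$ need not intertwine $\iota_{\alpha*}$ and $\iota_{\beta*}$; this is precisely the ``basic'' condition in Definition~\ref{def:basic}, and it is not automatic. Second, even granting that, you still need to recover the trace simplex (not just the set of trace functionals on $K_0$) from the order structure. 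You correctly flag this as the hard point and note that one must identify the image of $T(A_\alpha)\to S(K_0(A_\alpha))$, but you do not resolve it. The paper's appeal to Winter's theorem is exactly what absorbs this difficulty: that theorem already contains the argument that, when $K$-theory separates traces, the ordered $K$-theory determines the pairing and the trace simplex up to the relevant equivalence.

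In short: keep your argument that rational surjectivity implies separation of traces (it matches the paper's), but then finish by quoting \cite[Theorem~1.3]{win:classprod} rather than attempting to reassemble $\mathrm{GEll}$ by hand.
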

(We note that, unlike the ordinary Chern character, the tangential Chern character is not in general a rational isomorphism, even if the foliated space is a manifold.) We will also discuss a generalisation of Theorem \ref{thm main2} to the setting where $\Z^d$ is replaced by a discrete cocompact subgroup of a simply connected solvable Lie group, which includes the case of the discrete and continuous Heisenberg groups.

Theorem \ref{thm main2} can also be applied to clarify existing results in the literature concerning properties of the crossed product algebra under constraints on $Y_\alpha$. We give an example of such an application by combining Theorem \ref{thm main2} with the main result in \cite{benmat:mag} to obtain the following result:

\begin{corollaryA}
\label{cor:gap equivalence}
The $C^*$-algebra of observables associated to the (magnetic) gap-labelling problem for principal solenoidal tori is isomorphic to a crossed product $C^*$-algebra arising from a (twisted) $\Z^d$-odometer system.
\end{corollaryA}

(The statement above is slightly ambiguous for the sake of brevity; see Subsection \ref{subsec:app} for more details.)
The gap-labelling conjecture stems from a noncommutative approach to Bloch theory for aperiodic materials \cite{bel:gap}. Corollary \ref{cor:gap equivalence} helps to clarify the type of crossed product algebras involved in the strategy devised in \cite{benmat:mag}, which assumes a favourable topological structure for the mapping torus. By applying our geometric construction of the Elliott invariant, we are able to translate the relevant topological constraints to operator-algebraic constraints, and thereby identify the corresponding crossed products as those arising from $\Z^d$-odometers. Nonetheless, it is worth pointing out that the original gap-labelling conjecture was formulated for $\Z^d$-minimal systems arising from tiling spaces. These systems are in general more complicated than odometers \cite{putand:til,valmak:groupoid,valmak:groupoidtwo}.

The paper is set out as follows. We begin Section \ref{sec:prelims} by collecting some relevant background that will form the basis for our subsequent discussion. We then set up the problem of identifying the Elliott invariant using properties of the mapping torus. The details of this identification are the subject of the next two sections, which form the technical core of the paper. In Section \ref{sec:two}, we investigate  topological conditions under which the order unit in the Elliott invariant is preserved by the naturally induced map on crossed products. Section \ref{sec:index} concerns the identification of trace pairings. In Section \ref{sec:results} we prove the main results of the paper, Theorems \ref{thm main1} and \ref{thm main2}. We also discuss further applications of the theory, including to the gap-labelling conjecture for principal solenoidal tori and $\Z^d$-odometer systems, and discuss the generalisation to discrete cocompact subgroups of simply connected Lie groups.

\addtocontents{toc}{\SkipTocEntry}\section*{Acknowledgements}

We would like to thank J.~Gabe and Z.~Niu for many helpful suggestions concerning the classification theory of $C^*$-algebras. The
second named author would like to thank P.~Piazza for suggesting the gap-labelling conjecture and foliations as research topics. 

\section{Preliminaries}
\label{sec:prelims}
\subsection{Basic setup}
\label{subsec:basic}
Suppose $\alpha$ is a free and minimal left action of $G=\Z^d$ on the compact metrizable space $X$. Let $Y_\alpha=X\times_{\Z^d}\R^d$ denote the quotient by the equivalence relation
$$(x,\mathbf{u})\sim(\alpha(-\mathbf{n})(x),\mathbf{u}+\mathbf{n}).$$
The space $Y_\alpha$ is commonly known as the \emph{mapping torus} for the action $\alpha$, being a
fibre bundle
$$
\begin{tikzcd}
X\arrow{r} & Y_\alpha\arrow{d}\\
& \T^d
\end{tikzcd}
$$
over the standard torus. 
\begin{rema}
In the case that $d=1$, the action of $\Z$ on $X$ is generated by a single homeomorphism $\alpha\colon X\to X$, so the situation reduces to the classical mapping torus from topology. It is useful to keep in mind the case of $X=S^1$, with $\Z$ acting by a rotation; in that case, the mapping torus is topologically the $2$-torus and can also be thought of as a cylinder $S^1\times[0,1]$, with ends identified by a twist.
\end{rema}

The mapping torus can also be thought of as a suspension of the action $\alpha$. Indeed, there is a flow on $Y_\alpha$ defined by the left $\R^d$-action
\begin{equation}
\label{eq Rd action}	
\tilde{\alpha}(\mathbf{s})[x,\mathbf{u}] =[x,\mathbf{u}+\mathbf{s}].
\end{equation}
From the data $(X,\alpha)$, we may construct the transformation groupoids 
\begin{align}
\label{eq groupoids}
\G_\alpha&=\Z^d\ltimes_\alpha X,\nonumber\\
\tilde{\G}_\alpha&=\R^d\ltimes_{\tilde\alpha}Y_\alpha.
\end{align} 
Note that $\G_\alpha$ is isomorphic to the subgroupoid
$\Z^d\ltimes_{\tilde{\alpha}}(X\times_{\Z^d}\Z^d)\subseteq\tilde{\G}_\alpha$ via the map
\begin{align}
\label{eq subgroupoid}
\left(x,\mathbf{n},\alpha(\mathbf{n})(x)\right)\mapsto\left([x,0],\mathbf{n},[x,\mathbf{n}]\right).
\end{align}
%Completely analogously, we associate to $(X,\beta)$ the groupoids $\G_\beta$ and $\tilde{\G}_\beta$. 
We set $A_\alpha=C(X)\rtimes_{\alpha}\Z^d$ for brevity, and we have identifications
\begin{align*}
C^*(\G_\alpha)&=C(X)\rtimes_{\alpha}\Z^d=A_\alpha,\\
C^*(\tilde{\G}_\alpha)&=C(Y_\alpha)\rtimes_{\tilde{\alpha}}\R^d.
\end{align*}
Here, since $\Z^d$ and $\R^d$ are amenable, we may equivalently take the full or reduced crossed products.

\begin{rema}
In the above, we would gain no further generality by assuming the action to be only \emph{topologically} free -- meaning the fixed-point set of any non-identity element has empty interior -- since the fixed points for any one group element in an abelian group form a closed invariant set.
\end{rema}
\begin{convention*}
When it is clear from context, we may omit the subscript $\alpha$ in $A_\alpha$, $Y_\alpha$, or $\G_\alpha$, as well as new objects defined from them.	
\end{convention*}
We will often use the following two well-known results (see \cite[Examples 2.6 \& 2.7]{murewi:morita} and \cite[Section 3]{put:spiel}).
%The relevant proofs can be found in
%the references indicated.

\begin{lemm}\label{lem:morita}
Let $s\colon \tilde{\G}\to Y$ denote the source map. Then the subspace $s^{-1}(X\times\{0\})$ is a $\G$-$\tilde{\G}$ principal bibundle. In particular, $C^*(\G)$ is strongly Morita equivalent to $C^*(\tilde{\G})$.
\end{lemm}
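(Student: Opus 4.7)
The plan is to exhibit $Z:=s^{-1}(X\times\{0\})$ as a groupoid equivalence in the sense of Muhly--Renault--Williams, after which the Morita equivalence of the $C^*$-algebras follows by citing their general theorem. First I would parametrise $Z$ concretely: since $s$ on $\tilde{\G}=\R^d\ltimes_{\tilde\alpha}Y$ is projection to the $Y$-factor, any arrow with source in $X\times\{0\}\subset Y$ has the form $([x,0],\mathbf{s},[x,\mathbf{s}])$, yielding a natural homeomorphism $Z\cong X\times\R^d$. Under this identification the restricted source and target maps become the projection $\mu_L\colon(x,\mathbf{s})\mapsto x$ and the orbit map $\mu_R\colon(x,\mathbf{s})\mapsto[x,\mathbf{s}]$; these will play the role of moment maps.

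Next, I would equip $Z$ with commuting left $\G$- and right $\tilde{\G}$-actions, both given by groupoid composition inside $\tilde{\G}$, with $\G$ embedded via \eqref{eq subgroupoid}. Explicitly, the right $\tilde{\G}$-action translates the $\R^d$-coordinate, while the left $\G$-action is $\mathbf{n}\cdot(x,\mathbf{s})=(\alpha(-\mathbf{n})(x),\mathbf{s}+\mathbf{n})$. One then checks the four principality conditions: the $\R^d$-action is manifestly free and proper and its orbits match the fibres of $\mu_L$, so $Z/\tilde{\G}\cong X$; the $\Z^d$-action is proper by discreteness and free precisely because $\alpha$ is (the fixed-point equations $\alpha(-\mathbf{n})(x)=x$ and $\mathbf{s}+\mathbf{n}=\mathbf{s}$ together force $\mathbf{n}=0$); and the equivalence relation defining $Y$ identifies the $\Z^d$-orbits in $Z$ with the fibres of $\mu_R$, giving $\G\backslash Z\cong Y$. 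Commutativity of the two actions is automatic from associativity of composition in $\tilde{\G}$.

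With the bibundle in hand, the ``in particular'' clause is an application of the Muhly--Renault--Williams equivalence theorem, valid in our amenable setting where full and reduced $C^*$-algebras coincide, so that the choice of completion is immaterial. The argument is essentially book-keeping that unpacks the definitions in \eqref{eq groupoids} and \eqref{eq subgroupoid}; the one step where a geometric hypothesis is used in an essential way is the freeness of the $\Z^d$-action on $Z$, which is a direct consequence of freeness of $\alpha$ and would be the main obstacle were the hypotheses weakened.
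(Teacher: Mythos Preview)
The paper does not supply its own proof of this lemma; it simply cites \cite{murewi:morita,put:spiel}. Your argument is exactly the standard one from those references: identify $Z\cong X\times\R^d$, equip it with commuting $\G$- and $\tilde\G$-actions coming from composition in $\tilde\G$, verify the principality conditions, and invoke the Muhly--Renault--Williams theorem. This is correct and is the intended route.

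One point deserves correction, however. In your final paragraph you say that freeness of the $\Z^d$-action on $Z$ is ``a direct consequence of freeness of $\alpha$'' and identify this as the step where the geometric hypothesis enters essentially. But your own fixed-point computation already shows otherwise: the equation $\mathbf{s}+\mathbf{n}=\mathbf{s}$ alone forces $\mathbf{n}=0$, regardless of what $\alpha$ does. The Morita equivalence of Lemma~\ref{lem:morita} holds for \emph{any} $\Z^d$-action on $X$; freeness of $\alpha$ is used elsewhere in the paper (e.g.\ to make $\G$ principal and to identify the holonomy groupoid with $\tilde\G$), but not here. Likewise, ``proper by discreteness'' is slightly glib: properness of the $\Z^d$-action on $Z$ follows because the $\R^d$-coordinate is shifted by translation, i.e.\ it inherits properness from the standard $\Z^d$-action on $\R^d$, not from discreteness alone.
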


The celebrated Connes' Thom isomorphism \cite{connes:thom} states that $K_{*+d}(B)\cong K_*(B\rtimes \R^d)$ for any $C^*$-algebra
$B$ with an $\R^d$-action. In our setting this implies that $K^{*+d}(Y)\cong K_*(C^*(\tilde{\G}))$. Then Lemma \ref{lem:morita} and Morita invariance of $K$-theory imply the following.

\begin{prop}\label{prop:tcob}
There is an isomorphism $K^{*+d}(Y) \cong K_*(A)$.
\end{prop}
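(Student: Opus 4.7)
The proof is essentially a composition of the two ingredients already recalled in the paragraphs immediately preceding the statement, so I expect no serious obstacle; the work lies in assembling the chain of isomorphisms carefully and checking the grading shift.

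The plan is as follows. First, identify $K^{*+d}(Y)$ with $K_{*+d}(C(Y))$ via Gelfand duality for the compact space $Y$. Next, apply Connes' Thom isomorphism to the $\R^d$-action $\tilde{\alpha}$ on $C(Y)$ arising from the suspension flow \eqref{eq Rd action}; taking $B = C(Y)$ in the statement of the theorem yields
\begin{equation*}
K_{*+d}(C(Y)) \;\cong\; K_*\bigl(C(Y)\rtimes_{\tilde{\alpha}}\R^d\bigr).
\end{equation*}
Under the identification $C(Y)\rtimes_{\tilde{\alpha}}\R^d = C^*(\tilde{\G})$ recorded just above in the excerpt, this becomes $K^{*+d}(Y)\cong K_*(C^*(\tilde{\G}))$.

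Finally, invoke Lemma \ref{lem:morita}, which produces a strong Morita equivalence between $C^*(\G)$ and $C^*(\tilde{\G})$, and then appeal to Morita invariance of operator $K$-theory to obtain $K_*(C^*(\tilde{\G}))\cong K_*(C^*(\G)) = K_*(A)$. Composing these three isomorphisms delivers the desired $K^{*+d}(Y)\cong K_*(A)$.

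The only point worth a moment's care is bookkeeping of the degree shift: Connes' Thom isomorphism shifts the grading by $d \bmod 2$, which is exactly absorbed by writing $K^{*+d}$ on the left-hand side. The Morita equivalence step is grading-preserving, so no further shift appears. Everything else is formal, and I anticipate no real difficulty.
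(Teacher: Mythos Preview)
Your proposal is correct and follows exactly the argument the paper gives in the paragraph preceding the proposition: Connes' Thom isomorphism applied to $C(Y)$ with its $\R^d$-action, followed by the Morita equivalence of Lemma~\ref{lem:morita} and Morita invariance of $K$-theory. The additional remarks on Gelfand duality and the grading shift are accurate and simply make explicit what the paper leaves implicit.
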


Let us put the groupoid $\tilde{\G}$ in a broader context, in the language of foliation theory
	(our conventions are based on \cite{mooscho:globanalysis}). The space $Y$ is foliated by the orbits of the $\R^d$-action \eqref{eq Rd action}. The tangent bundles of the individual leaves together form the \emph{foliation bundle} $FY$, a real vector bundle of rank $d$ that is orientable in this case. One possible choice of transversal for this foliated space is 
	\begin{equation}
	\label{eq N}
	N=[X\times\{0\}]\coloneqq p(X\times \{0\})\subseteq Y,
	\end{equation}
	where $p\colon X\times\R^d\to Y$ is the quotient map. In general, the \emph{foliation $C^*$-algebra}
	%, denoted by $C^*_r(\mathrm{Hol})$, 
	is defined to be the $C^*$-algebra associated to the holonomy groupoid of the foliated space,
	%$\mathrm{Hol}$, 
	a quotient of the monodromy
	groupoid, i.e. the groupoid of homotopy classes of paths lying entirely on one leaf (the relation between holonomy and monodromy groupoids is explored in detail in  \cite{phil:hol}). However, the situation simplifies in our setting: since the leaves of $Y$ are contractible, these two groupoids coincide and are in fact identical to the transformation groupoid $\tilde{\G}$. The foliation $C^*$-algebra is then the algebra $C^*(\tilde\G)$ considered above.
	
%	\emph{graph} of the foliation, given by the equivalence relation on $Y$	induced by the leaves. As such, there is only one groupoid we need to consider, namely $\tilde{\G}$.

%Note that when $d$ is even, there is no shift in degree. 
\subsection{Twisted crossed products}
\label{subsec twisted}
We briefly digress to discuss a generalisation of the above to the case of twisted crossed products of $C(X)$ by $\Z^d$ (cf. the setting in \cite{benmat:magzero}). Given a matrix $\Theta\in M_d(\R)$, define a map $\sigma\colon\Z^d\times\Z^d\to U(1)$ by 
\[
\sigma(\mathbf{m},\mathbf{n}) = e^{-\pi i\, \mathbf{m}^t\Theta\mathbf{n}}.
\]
Then $\sigma$ is a normalised group $2$-cocycle for $\Z^d$, and one can use it to form a twisted crossed product $C^*$-algebra, which we will denote by
\begin{equation}
\label{eq A theta}
A^\Theta=C(X)\rtimes_\sigma\Z^d=C^*(\G,\Theta).
\end{equation}
(Again, we have suppressed here the action $\alpha$ on $X$.)
Note that $\sigma$ is homotopic to the trivial $2$-cocycle via the path $s\mapsto\sigma^s$, for $s\in[0,1]$, where $\sigma^s(\mathbf{m},\mathbf{n})=e^{-\pi i s \, \mathbf{m}^t\Theta\mathbf{n}}$. It follows from \cite[Theorem 4.2]{parae:twisttwo} 
%, together with Proposition \ref{prop:tcob} above, implies 
that we have an isomorphism
\begin{equation}
\label{eq twisted untwisted}
\phi\colon K_*(A^\Theta)\cong K_*(A).
\end{equation}
(See also the
proof of Lemma \ref{lem:twistunit} in the next section.) Let us also introduce the groupoid 
$$\bar{\G}=(X\times\R^d\times\R^d)/\Z^d,$$ 
following \cite{benmat:magzero}. Here the action of $\Z^d$ on $X\times\R^d\times\R^d$ is diagonal, i.e.
$$\mathbf{n}\cdot(x,\mathbf{v},\mathbf{u})=(\alpha(-\mathbf{n})(x),\mathbf{v}+\mathbf{n},\mathbf{u}+\mathbf{n}),$$ 
with range $r[x,\mathbf{v},\mathbf{u}]=[x,\mathbf{v}]$, source $s[x,\mathbf{v},\mathbf{u}]=[x,\mathbf{u}]$, and multiplication
%\[
%[\alpha(-n)(x),t_3,t_2+n]\circ [x,t_2,t_1]= [x,t_3,t_1],\qquad n\in\Z^d.
%\]
\[
[x,\mathbf{w},\mathbf{v}]\circ [x,\mathbf{v},\mathbf{u}]= [x,\mathbf{w},\mathbf{u}].
\]
One verifies that $\bar{\G}$ is isomorphic to the groupoid $\tilde{\G}$ from \eqref{eq groupoids} via the map 
\begin{align}
\label{eq iso Gtilde Gbar}
\tilde{\G}\to \bar{\G},\quad ([x,\bt],\bu,[x,\bt+\bu])\mapsto[x,\bt+\bu,\bt].
\end{align}
Note that this isomorphism endows $\bar{G}$ with a (right) Haar system $\{\nu_y\}_{y\in Y}$, obtained by transporting the standard system on $\tilde{G}$ induced by the Lebesgue measure on $\R^d$.

We now define a $C^*$-algebra $C^*(\tilde{\G},\Theta)$, by using the $\bar\G$ picture of $\tilde\G$, whose basic elements are continuous kernels that satisfy a certain covariance condition defined by $\Theta$. To formulate this, consider the closed $2$-form 
$$B=\frac{1}{2}d\mathbf{u}^t\Theta d\mathbf{u}$$ 
on $X\times \R^d$ (independent of $x\in X$). Write $\mathbf{u}=(u_1,\ldots,u_d)$, and let 
$$\eta=\sum_{j<k}\Theta_{jk}u_jdu_k.$$ 
Observe that $B=d\eta$. Let $m^*$ denote the
pullback map associated to the diffeomorphism of translation by $\mathbf{m}\in \Z^d$, so that $d(\eta-m^*\eta)=0$. Since there are no obstructions to exactness on $\R^d$, we can find a smooth function 
\begin{align}
\label{varphi}
	\varphi_{\mathbf{m}}\colon\R\to\R 
\end{align}
satisfying $d\varphi_{\mathbf{m}}=\eta-m^*\eta$ and $\varphi_{\mathbf{m}}(0)=0$.

The elements $k\in C_c(X\times\R^d\times\R^d)$ form a $\ast$-algebra under convolution and adjoint defined as usual using composition and inversion in $\bar{G}$. This algebra can be normed as follows:
\[
\lVert k \rVert_1=\sup_{y\in Y}\Bigl\{\max \biggl( \int \lvert k(x,v,u)\rvert \,d\nu_y,\,\int \lvert k(x,v,u)\rvert \,d\nu^y \biggr)\Bigr\},
\]
where the Haar system $\{\nu_y\}_{y\in Y}$ is obtained from Eq. \eqref{eq iso Gtilde Gbar}.
\begin{defi}
\label{def c g tilde theta}
Let $C^*(\tilde{\G},\Theta)$ be the enveloping $C^*$-algebra of the $\lVert \cdot \rVert_1$-completion of the subalgebra of continuous functions $k\in C_c(X\times\R^d\times\R^d)$ satisfying
\begin{equation}\label{eq:covariance}
e^{-\pi i \varphi_{\mathbf{n}}(\mathbf{v})}k(\alpha(-\mathbf{n})(x),\mathbf{v}+\mathbf{n},\mathbf{u}+\mathbf{n})e^{\pi i\varphi_{\mathbf{n}}(\mathbf{u})}=k(x,\mathbf{v},\mathbf{u}),
%k(\alpha(-\mathbf{n})(x),\mathbf{v}+\mathbf{n},\mathbf{u}+\mathbf{n})=e^{\pi i \varphi_{\mathbf{n}}(\mathbf{v})}k(x,\mathbf{v},\mathbf{u})e^{-\pi i\varphi_{\mathbf{n}}(\mathbf{u})},
\end{equation}
where $\bn\in\Z^d$ and $\varphi_{\mathbf{n}}$ is defined in Equation \eqref{varphi}.
\end{defi}

Consider the subgroupoid $\cH=[X\times\Z^d\times\Z^d]$ of $\bar{\G}$. The map \eqref{eq iso Gtilde Gbar} restricts to an isomorphism $\Z^d\ltimes(X\times_{\Z^d}\Z^d)\cong\cH$. Composing this with the isomorphism \eqref{eq subgroupoid} gives an isomorphism $\G\cong\cH$, which we can use to endow $\cH$ with a right Haar system coming from the counting measure on $\Z^d$. 

Define $C^*(\cH,\Theta)$ as the enveloping $C^*$-algebra in analogy with Definition \ref{def c g tilde theta}, from the functions $k\in C_c(X\times\Z^d\times\Z^d)$ satisfying the same covariance condition \eqref{eq:covariance}.

\begin{prop}
\label{prop kernel iso}
The twisted crossed product $C^*$-algebra $A^\Theta$ defined in \eqref{eq A theta} is $*$-isomorphic to $C^*(\cH,\Theta)$.
%\begin{equation*}
%\cH\cong\Z^d\ltimes X, \qquad A^\Theta\cong C^*(\cH,\Theta).
%\end{equation*}
\end{prop}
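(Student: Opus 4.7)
My plan is to construct an explicit $*$-isomorphism between dense $*$-subalgebras and then extend by continuity. On the left, $C_c(\Z^d,C(X))$ sits densely in $A^\Theta$ with the $\sigma$-twisted convolution and involution. On the right, the dense $*$-subalgebra of $C^*(\cH,\Theta)$ consists of continuous covariant kernels on $X\times\Z^d\times\Z^d$ that are compactly supported in the $\Z^d\times\Z^d$ directions, endowed with the counting-measure kernel convolution inherited from $\bar{\G}$.

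To define the bijection, I would use $\{\bu=0\}\subseteq X\times\Z^d\times\Z^d$ as a fundamental domain for the diagonal $\Z^d$-action. Given $f\in C_c(\Z^d,C(X))$, set $\Phi(f)(x,\bv,0)=f(\bv)(x)$ and extend $\Phi(f)$ to all of $X\times\Z^d\times\Z^d$ by \eqref{eq:covariance}. Consistency of this extension uses $\varphi_\bn(0)=0$, and any covariant kernel is uniquely recovered from its restriction to $\bu=0$, so $\Phi$ is a vector space isomorphism.

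The core of the proof is showing $\Phi$ is a $*$-algebra map. For multiplication, evaluating $(\Phi(f_1)*\Phi(f_2))(x,\bv,0)=\sum_\bw\Phi(f_1)(x,\bv,\bw)\,\Phi(f_2)(x,\bw,0)$ and applying \eqref{eq:covariance} with $\bn=-\bw$ to express $\Phi(f_1)(x,\bv,\bw)$ in terms of $f_1(\bv-\bw)$ evaluated at $\alpha(\bw)(x)$ produces a phase involving $\varphi_{-\bw}$. Using the explicit formula $\varphi_\bn(\bu)=-\sum_{j<k}\Theta_{jk}n_ju_k$ --- obtained by integrating $d\varphi_\bn=\eta-n^*\eta$ with $\varphi_\bn(0)=0$ --- and substituting $\bm=\bv-\bw$, the resulting phase is exactly $\sigma(\bm,\bv-\bm)$, up to a $1$-coboundary that is absorbed by a diagonal gauge change reflecting the choice of fundamental domain. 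Equivalently, $\varphi_\bn$ was constructed so that its integer restriction represents the class of $\sigma$ in $H^2(\Z^d,U(1))$. The involutions are matched by a parallel and shorter computation.

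Finally, to promote the $*$-algebra map to a $C^*$-isomorphism, I would observe that both norms agree with the regular representation on the Hilbert $C(X)$-module $\ell^2(\Z^d,C(X))$: on the left this is the reduced twisted crossed product norm, while on the right it is the $L^2$-operator norm defining $C^*(\cH,\Theta)$. Amenability of $\Z^d$ then identifies reduced with universal on $A^\Theta$, completing the isomorphism. The main obstacle is the bookkeeping in the multiplication computation --- tracking the covariance-transport phases against $\sigma$ requires careful attention to sign and orientation conventions --- but once $\varphi_\bn$ is written in coordinates the identification is essentially transparent.
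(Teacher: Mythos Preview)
Your approach is essentially the same as the paper's: both build an explicit bijection between $C_c(\Z^d,C(X))$ and the covariant kernels on $X\times\Z^d\times\Z^d$ using the slice $\{\bu=0\}$ as a fundamental domain, and then check the $*$-algebra structure. The paper simply writes down the closed formula $\bar f(x,\bn,\bm)=e^{\pi i\varphi_\bm(\bn-\bm)}f(\bn-\bm,\alpha(\bn)(x))$ and records the identity $e^{\pi i\varphi_\bn(\bt)}=\sigma(-\bn,\bt)$, after which the homomorphism property is a direct verification.

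One small difference worth noting: the paper sets $\bar f(x,\bn,0)=f(\bn,\alpha(\bn)(x))$ rather than your $\Phi(f)(x,\bv,0)=f(\bv)(x)$. With that extra $\alpha(\bn)$-twist in the restriction, the covariance-transported phase matches $\sigma$ \emph{exactly}, so no ``$1$-coboundary absorbed by a diagonal gauge change'' is needed. Your hedge at that point is not wrong---cohomologous cocycles do give isomorphic twisted crossed products---but it hides the fact that a cleaner choice of restriction eliminates the discrepancy entirely. If you redo the multiplication check with the paper's restriction you will see the phases line up on the nose, which makes the bookkeeping you flag as the ``main obstacle'' essentially disappear.
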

\begin{proof}
%The map \eqref{eq iso Gtilde Gbar} restricts to an isomorphism $\Z^d\ltimes(X\times_{\Z^d}\Z^d)\cong\cH$. Composing this with the isomorphism \eqref{eq subgroupoid} 
%$\G\to\Z^d\ltimes(X\times_{\Z^d}\Z^d)$
The groupoid isomorphism $\G\cong\cH$ given above (obtained by restricting \eqref{eq iso Gtilde Gbar} and composing with \eqref{eq subgroupoid}) sends $(x,\bu,\alpha(\bu)(x))$ to $[x,\bu,0]$.
% $\Z^d\ltimes X\to\cH$ given by $(\mathbf{n},x)\mapsto [\alpha(-\mathbf{n})(x),n,0]$ is an isomorphism of groupoids. 
% Based on this, we set $\bar{f}(x,\mathbf{n},0)=f(\mathbf{n},\alpha(\mathbf{n})(x))$, where $f\in C_c(\Z^d\ltimes X)$.
Given $f\in C_c(X\times\Z^d)$, define a kernel $\bar{f}\in C_c(X\times\Z^d\times\Z^d)$ by the formula
\[
\bar{f}(x,\mathbf{n},\mathbf{m})=e^{\pi i\varphi_{\mathbf{m}}(\mathbf{n}-\mathbf{m})}f(\mathbf{n}-\mathbf{m},\alpha(\mathbf{n})(x)).
\]
Noting that $\exp(\pi i\varphi_{\mathbf{n}}(\mathbf{t}))=\sigma(-\mathbf{n},\mathbf{t})$ for $\mathbf{t}\in\Z^d$, one verifies that the assignment $f\mapsto\bar{f}$ defines a $*$-isomorphism $A^\Theta\to C^*(\cH,\Theta)$ with inverse given by $g\mapsto\underline{g}$, where
\begin{align*}
	\underline{g}(\mathbf{n},x)&=g(\alpha(-\mathbf{n})x,\mathbf{n},0).\qedhere
\end{align*}
\end{proof}
%\subsection{A $K$-theoretic isomorphism}
%From the above discussion, we have:

\subsection{Classifiability and the Elliott invariant}
Let us return to the question raised in the introduction. Suppose, as before, that we have two free and minimal actions $\alpha$ and $\beta$ of $\Z^d$ on $X$, and a map of mapping tori $F \colon Y_\alpha \to Y_\beta$ inducing an isomorphism of $K$-groups
\begin{equation}
\label{eq K iso}
K_*(A_\alpha)\cong K_*(A_\beta),\qquad K_*(A_\alpha^\Theta)\cong K_*(A_\beta^\Theta)	
\end{equation}
(the precise conditions on the map $F$ relating $Y_\alpha$ and $Y_\beta$ will be given later).
This suggests using the classification theory of $C^*$-algebras (the Elliott program) to lift this to an isomorphism of $C^*$-algebras. 

Recall that for any unital, stably finite $C^*$-algebra $B$, the \emph{Elliott invariant} is the tuple 
\begin{equation}\label{eq:eltuple}
(K_0(B),K_0(B)^+,[1_B],K_1(B), T(B),R),
\end{equation}
where the first four entries together comprise the ordered $K$-theory of $B$, $T(B)$ is the trace simplex (a Choquet simplex) and $R\colon T(B)\times K_0(B)\to \R$ is the pairing given by evaluating a trace at a projection. The class $[1_B]$ is referred to as the order unit.

In particular, it is known that:
\begin{theo}\label{thm:classification}
	  All unital, simple, separable, nuclear (stably finite, non-elementary) $C^*$-algebras that absorb the Jiang-Su algebra $\mathcal{Z}$ and satisfy the UCT are classified by the Elliott invariant. 
\end{theo}	 

This is a deep theorem which is the result of work by several authors; the interested reader should consult \cite{cetww:nucdim,egln:two,gln:one,gln:two,tww:quasidiag,win:classprod} and the references therein.

To ensure that the algebra $A^\Theta$ falls within this class, we make the following standing assumption.

\begin{assumption*}
%\leavevmode
%\begin{itemize}
The space $X$ has finite covering dimension.
%	\item the actions $\alpha$ and $\beta$ have compact spaces of ergodic measures;
%	\item the group $K_0(A_\alpha)$ (resp. $K_0(A_\beta)$) separates the traces of $A_\alpha$ (resp. $A_\beta$).
%\end{itemize}
\end{assumption*}

An important example that is of major interest in topological dynamics is when $X$ is a Cantor space. In this case, the covering dimension is zero. 

Let us notice that $A^\Theta$ is unital and separable because $X$ is compact and metrizable. The minimality of the action $\alpha$ of $\Z^d$ on $X$ implies this $C^*$-algebra is simple \cite[Corollary after Theorem 1]{archspiel:min}. It follows from the above assumption and the main theorem in \cite{szabo:rokhlin}, combined with \cite[Theorems B \& C]{gwz:rokhlin}, that $A^\Theta$ has finite nuclear dimension (in particular, it is nuclear). Further, $A^\Theta$ absorbs the Jiang-Su algebra $\mathcal{Z}$ by the main result in \cite{winter:pure}, together with Corollary 6.3 in the same paper. It also satisfies the UCT (see \cite[Lemma 10.6]{Tu99} or \cite[Proposition 9.5]{nestmeyer:loc}). Hence $A^\Theta$ is classifiable. 

We note here that it is possible for $A^\Theta$ to be $\mathcal{Z}$-stable without necessarily having the assumption on the covering dimension of $X$. For instance, if the action $\alpha$ has mean dimension zero, then the crossed product absorbs the Jiang-Su algebra and it is classifiable (see \cite{niu:meanzero}, the case of a single homeomorphism was shown in \cite{elniu:meanzero}).

%Under these hypotheses, \cite[Corollary 3.2]{win:classprod} implies that $A_\alpha$ and $A_\beta$ are classified by their
%\emph{ordered $K$-theory}. Recall that for a unital $C^*$-algebra $B$, its ordered $K$-theory is the tuple
%$(K_0(B),K_1(B),K_0(B)^+,[1_B])$, where the last two components are the positive cone and its order unit.

%In order to identify the positive cones, we will make use of index-theoretic methods and their relation to
%trace pairings. That this is possible for a large class of $C^*$-algebras is a consequence of the classification theory of $C^*$-algebras, which implies that the positive cone is determined by the pairing between tracial states and the even $K$-theory group.
 
In addition, by \cite{ror:stablereal}, $\mathcal{Z}$-stability implies the strict comparison property (of positive elements with respect to bounded traces). Since $\Z^d$ is amenable, $X$ admits an $\alpha$-invariant probability measure. From this, Lemma \ref{lem:tracemeas} below shows how to induce a tracial state on $A^\Theta$; in particular, $A^\Theta$ is stably finite \cite[Corollary 4.7]{cuntz:dimfunc}. For a stably finite $C^*$-algebra $B$, the Murray-von Neumann semigroup $V(B)$ admits an order-embedding into the Cuntz semigroup \cite{apt:clas}. In particular, this  means that the order on even $K$-theory is determined by the pairing $R$ (hence $K_0(B)^+$ can be dropped in Eq. \eqref{eq:eltuple}).

\begin{rema}
Regardless of classifiability, Phillips \cite{phil:cantorzd} shows that when $X$ is a Cantor space equipped with a $\Z^d$-action, the cone of the associated crossed product $K_0(C(X)\rtimes \Z^d)^+$ is determined by normalised traces.	
\end{rema}

Let us describe the relationship between invariant Borel measures on $X$ and tracial states on
$A_\alpha$. Since $\alpha$ is a free action, $\G$ is a principal \'etale groupoid with a Haar system given by counting measures.

\begin{lemm}[{\normalfont\cite[Proposition 5.4]{ren:group}}]\label{lem:tracemeas}
Let $\mu$ be a $\Z^d$-invariant Borel probability measure on $X$. For a function $f\in C_c(\G)$, the formula 
\begin{equation}\label{eq:dualtrace}
\tau_\mu(f)=\int_X f|_X\, d\mu
\end{equation}
defines a tracial state on $C^*(\G)$. Every tracial state on $C^*(\G)$ arises in this way.
\end{lemm}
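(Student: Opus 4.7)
The plan is to prove the lemma in two directions. First, I would show that the formula \eqref{eq:dualtrace} does define a tracial state on the dense $*$-subalgebra $C_c(\G) \subseteq C^*(\G)$, and then that it extends continuously to the whole algebra. Writing elements of $C_c(\G)$ as finite sums $f = \sum_{\mathbf{n}} f_\mathbf{n} u_\mathbf{n}$ with $f_\mathbf{n} \in C(X)$, the restriction $f|_X$ equals $f_0$, and the trace property $\tau_\mu(fg) = \tau_\mu(gf)$ reduces to a change of variables in the convolution product $(fg)|_X(x) = \sum_{\mathbf{n}} f_\mathbf{n}(x)\, g_{-\mathbf{n}}(\alpha(-\mathbf{n})x)$, where $\Z^d$-invariance of $\mu$ closes the computation. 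Positivity is immediate from $\tau_\mu(f^*f) = \int_X \sum_\mathbf{n} |f_\mathbf{n}|^2\, d\mu \geq 0$, and continuity follows from $|\tau_\mu(f)| \leq \|E(f)\|_\infty \leq \|f\|$, where $E \colon C^*(\G) \to C(X)$ is the canonical faithful conditional expectation sending $\sum f_\mathbf{n} u_\mathbf{n}$ to $f_0$.

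Conversely, given a tracial state $\tau$ on $C^*(\G)$, restricting to $C(X) \subseteq C^*(\G)$ gives a state, which by Riesz representation corresponds to a Borel probability measure $\mu$ on $X$. The trace property applied to $\tau(u_\mathbf{n} g u_\mathbf{n}^*) = \tau(g)$ for $g \in C(X)$ yields $\int_X g \circ \alpha(-\mathbf{n})\, d\mu = \int_X g\, d\mu$, i.e., $\Z^d$-invariance of $\mu$. It then remains to show that $\tau$ is determined by this restriction, equivalently that $\tau = \tau \circ E$, so that $\tau = \tau_\mu$ on all of $C^*(\G)$.

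The main obstacle, and the crux of the argument, is this last step: establishing $\tau(g u_\mathbf{n}) = 0$ for every $g \in C(X)$ and every $\mathbf{n} \neq 0$. Here I would use freeness of $\alpha$ critically. Since $\alpha(\mathbf{n})$ has empty fixed-point set for $\mathbf{n} \neq 0$, every point of $X$ admits an open neighbourhood $U$ with $U \cap \alpha(\mathbf{n})(U) = \emptyset$; by compactness we can extract a finite such cover $\{U_i\}$ and subordinate partition of unity $\{\chi_i\}$. Using the trace identity $\tau(g u_\mathbf{n}) = \sum_i \tau\bigl(\chi_i^{1/2} g u_\mathbf{n} \chi_i^{1/2}\bigr)$ and computing $\chi_i^{1/2} g u_\mathbf{n} \chi_i^{1/2} = \chi_i^{1/2} g\, (\chi_i^{1/2} \circ \alpha(-\mathbf{n}))\, u_\mathbf{n}$, the function-part vanishes because $\mathrm{supp}(\chi_i^{1/2})$ and $\mathrm{supp}(\chi_i^{1/2} \circ \alpha(-\mathbf{n})) = \alpha(\mathbf{n})(\mathrm{supp}(\chi_i^{1/2}))$ are contained respectively in $U_i$ and $\alpha(\mathbf{n})(U_i)$, which are disjoint. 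Thus each summand is zero, and $\tau$ factors through $E$, completing the proof. This is essentially Renault's uniqueness-of-trace argument for principal étale groupoids, and the freeness of the action is exactly what powers the vanishing-of-off-diagonal-terms step.
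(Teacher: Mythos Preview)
Your proof is correct and complete. The paper itself does not supply a proof of this lemma, instead deferring to Renault's book (the citation in the lemma heading); your argument is precisely the standard one for principal \'etale transformation groupoids, and in particular the partition-of-unity trick exploiting freeness to kill the off-diagonal Fourier coefficients is the expected route.
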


With $\mu$ as above, let $\tilde{\mu}$ be the measure on $Y$ defined locally by $\mu\times \lambda^d$,
where $\lambda^d$ is the standard Lebesgue measure on $\R^d$. Then $\tilde{\mu}$ is $\tilde{\G}$-invariant (or, equivalently in our case, $\R^d$-invariant). 

Associated to $\tilde{\mu}$, there is a dual trace $\tilde{\tau}_\mu$ on $C^*(\tilde{\G})$, defined analogously to \eqref{eq:dualtrace}. 
It can be proved that $\tilde{\tau}_\mu$ is induced from $\tau_\mu$ under the trace induction operation in
\cite{comzettl:traceMorita}; the verification boils down to the equality
$\tau_\mu(\bra{\xi}\ket{\zeta})=\tilde{\tau}_\mu((\zeta\!\!\mid\! \xi))$, where $\xi,\zeta$ belong to the
$C^*(\G)$-$C^*(\tilde{\G})$ imprimitivity bimodule from Lemma \ref{lem:morita}, whose left and right inner products are $\bra{\xi}\ket{\zeta}$ and $(\zeta\!\!\mid\! \xi)$ respectively.
%Recalling that $A=C^*(\G)$, 
This has the following consequence.

\begin{lemm}[{\normalfont \cite[Proposition 2.1]{putkam:gap}}]\label{lem:traces}
The following diagram commutes:
\[
\begin{tikzcd}
K_0(C^*(\G)) \arrow[r, "\cong"] \arrow[d, "\tau_\mu"]      & K_0(C^*(\tilde{\G})) \arrow[d, "\tilde{\tau}_\mu"] \\
\R \ar[-,double line with arrow={-,-}]{r} & \R.               
\end{tikzcd}
\]
\end{lemm}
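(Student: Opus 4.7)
The plan is to read off the commutativity directly from the Morita equivalence of Lemma~\ref{lem:morita} combined with the induction identity
\[
\tau_\mu(\bra{\xi}\ket{\zeta})=\tilde{\tau}_\mu((\zeta\mid\xi))
\]
recalled above on the $C^*(\G)$-$C^*(\tilde{\G})$ imprimitivity bimodule $E$ obtained from the bibundle $s^{-1}(X\times\{0\})$. Since the horizontal arrow of the diagram is, by construction, the $K_0$-isomorphism implemented by $E$, it suffices to transport a general projection across the bimodule and compare the two trace values.

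First, I would reduce to a matricial expression adapted to $E$. After stabilising, an arbitrary class in $K_0(C^*(\G))$ is represented by a projection $p=(p_{ij})\in M_n(C^*(\G))$. Fullness of the left $C^*(\G)$-valued inner product lets me find elements $\xi_{ik}\in E$ with
\[
p_{ij}=\sum_{k=1}^m\bra{\xi_{ik}}\ket{\xi_{jk}}.
\]
Then the image of $[p]$ under the Morita isomorphism is represented by the projection $q=(q_{kl})\in M_m(C^*(\tilde{\G}))$ whose entries are $q_{kl}=\sum_i(\xi_{il}\mid\xi_{ik})$. Applying the induction identity diagonally and summing over the indices $i,k$ gives
\[
(\Trace\otimes\tau_\mu)(p)=\sum_{i,k}\tau_\mu\bigl(\bra{\xi_{ik}}\ket{\xi_{ik}}\bigr)=\sum_{i,k}\tilde{\tau}_\mu\bigl((\xi_{ik}\mid\xi_{ik})\bigr)=(\Trace\otimes\tilde{\tau}_\mu)(q),
\]
which is precisely the commutativity of the diagram at the level of $K_0$. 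The $K_1$-component is automatic since both vertical arrows are zero there.

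The main obstacle is the underlying induction identity itself, rather than the $K$-theoretic bookkeeping. In our concrete situation its verification is a Fubini computation: near the transversal $N=[X\times\{0\}]$ one has $\tilde{\mu}=\mu\times\lambda^d$ locally, so both $\tau_\mu(\bra{\xi}\ket{\zeta})$ and $\tilde{\tau}_\mu((\zeta\mid\xi))$ can be unfolded into integrals over $X\times\R^d$; the $\Z^d$-invariance of $\mu$ collapses the resulting expressions into the same transverse integral. Once the identity is in place, which is treated in detail in the references of Combes--Zettl and Putnam--Kaminker, the commutativity follows from the bimodule computation above.
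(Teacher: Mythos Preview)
Your proposal is correct and follows exactly the route the paper indicates: the lemma is not proved in the paper but is cited from \cite{putkam:gap}, with the preceding paragraph pointing out that the statement reduces to the trace-induction identity $\tau_\mu(\bra{\xi}\ket{\zeta})=\tilde{\tau}_\mu((\zeta\mid\xi))$ on the imprimitivity bimodule. You have simply fleshed out how this identity propagates to $K_0$ via the standard matricial description of the Morita isomorphism, which is precisely the intended argument.
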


The twisted analogues of Lemmas \ref{lem:morita} and \ref{lem:traces} follow from Proposition \ref{prop kernel iso} after making the obvious adjustments. Let us summarise this as follows. Let $C^*(\G,\Theta)$ and $C^*(\tilde{\G},\Theta)$ be as in \eqref{eq A theta} and Definition \ref{def c g tilde theta}. The formula \eqref{eq:dualtrace} still defines a tracial state $\tau_\mu$ on $C^*(\G,\Theta)$, and $\tilde\tau_\mu$ on $C^*(\tilde{\G},\Theta)$. Recall the definition of $N$ from \eqref{eq N}. Then:
\begin{prop}
The subspace $s^{-1}(N)\subseteq\bar{\G}$ can be completed to an imprimitivity bimodule between $C^*(\cH,\Theta)$ and $C^*(\bar{\G},\Theta)$, hence to one between $C^*(\G,\Theta)=A^\Theta$ and $C^*(\tilde{\G},\Theta)$. The resulting strong Morita equivalence is compatible with traces, in the sense that we have a commuting diagram:
\begin{equation*}
\begin{tikzcd}
K_0(C^*(\G,\Theta)) \arrow[r, "\cong"] \arrow[d, "\tau_\mu"]      & K_0(C^*(\tilde{\G},\Theta)) \arrow[d, "\tilde{\tau}_\mu"] \\
\,\,\,\R\,\,\, \ar[-,double line with arrow={-,-}]{r} & \,\,\,\R\,.            
\end{tikzcd}
\end{equation*}
\end{prop}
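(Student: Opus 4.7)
The plan is to construct a $C^*(\G,\Theta)$-$C^*(\tilde{\G},\Theta)$ imprimitivity bimodule by twisting the untwisted one from Lemma \ref{lem:morita}. Using the isomorphisms $\G\cong\cH$ established in Proposition \ref{prop kernel iso} and $\tilde{\G}\cong\bar{\G}$ from \eqref{eq iso Gtilde Gbar}, the subset $s^{-1}(T)\subseteq\bar{\G}$ inherits commuting left $\cH$- and right $\bar{\G}$-actions. I would equip $C_c(s^{-1}(T))$ with a pre-imprimitivity-bimodule structure by convolution formulas whose integrands carry the same phase factors $e^{\pi i\varphi_{\bn}(\cdot)}$ that appear in the covariance condition \eqref{eq:covariance}. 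This twist is precisely the adjustment needed so that the resulting inner products satisfy \eqref{eq:covariance} and hence land in the algebras $C^*(\cH,\Theta)\cong A^\Theta$ and $C^*(\tilde{\G},\Theta)$.

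The first step is then to verify the bimodule axioms: associativity of the module actions, compatibility $\langle\xi\mid\zeta\rangle\cdot\eta=\xi\cdot(\zeta\mid\eta)$, and fullness of both inner products. All of these reduce to the untwisted case (established in \cite{murewi:morita,put:spiel}) once one tracks the cancellations among the phases. These cancellations are forced by the identity $d\varphi_{\bn}=\eta-\bn^*\eta$ together with $\varphi_{\bn}(0)=0$, which together encode the 2-cocycle property of $\sigma$; they are exactly the relations already exploited in the proof of Proposition \ref{prop kernel iso}. Completion in the natural operator norms then produces the full bimodule.

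For the trace compatibility, the dual trace $\tilde{\tau}_\mu$ on $C^*(\tilde{\G},\Theta)$ is defined by the formula analogous to \eqref{eq:dualtrace}; this is well-defined precisely because the kernels in $C^*(\tilde{\G},\Theta)$ satisfy \eqref{eq:covariance}. To show that the diagram commutes, I would verify the basic identity
\[
\tau_\mu(\langle\xi\mid\zeta\rangle)=\tilde{\tau}_\mu((\zeta\mid\xi))
\]
for $\xi,\zeta$ in the dense subspace $C_c(s^{-1}(T))$. Along the portion of $\bar{\G}$ contributing to these traces, the phase factors appearing in the left and right inner products are complex conjugates, so they cancel and the identity reduces to its untwisted counterpart from \cite[Proposition 2.1]{putkam:gap}. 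Applying the trace-induction framework of \cite{comzettl:traceMorita} then promotes this to the required commuting $K$-theoretic diagram.

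The main obstacle is the careful bookkeeping of phase factors. Once the correct twisted formulas for the module operations and inner products are isolated by imposing \eqref{eq:covariance}, both the bimodule axioms and the trace identity follow from their untwisted counterparts, but the verification demands attention to which $\varphi_{\bn}$ appears at each concatenation and to the cocycle-type relations connecting them.
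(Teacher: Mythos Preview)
Your proposal is correct and follows essentially the same approach as the paper. The paper does not give an explicit proof of this proposition; it simply states that ``the twisted analogues of Lemmas \ref{lem:morita} and \ref{lem:traces} follow from Proposition \ref{prop kernel iso} after making the obvious adjustments,'' and your write-up is precisely a careful account of those adjustments---twisting the bimodule structure on $C_c(s^{-1}(T))$ by the phase factors dictated by \eqref{eq:covariance}, reducing the bimodule axioms to the untwisted case via the cocycle relations, and verifying the trace identity $\tau_\mu(\langle\xi\mid\zeta\rangle)=\tilde{\tau}_\mu((\zeta\mid\xi))$ by observing that the phases cancel on the diagonal.
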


Let us note that the result above (along with the general setup for twisted crossed product that we described) is implicit in \cite[Section 3]{benmat:magzero}.

\section{Identification of the order unit}\label{sec:two}
We now show that the isomorphism $K_*(A_\alpha)\cong K_*(A_\beta)$ from \eqref{eq K iso} preserves order units.
To do this, we will consider this isomorphism $K_*(A_\alpha)\cong K_*(A_\beta)$ from the point of view of two associated constructions:
\begin{itemize}[$\bullet$]
\item the mapping torus $Y =X\times_{\Z^d}\R^d$ associated to the $\Z^d$-action on $X$, and
\item the $C^*$-bundle $E=\R^d \times_{\Z^d} C(X) \to \T^d$ associated to the principal $\Z^d$-bundle $\R^d\to \T^d$.
\end{itemize}
\begin{rema}
The bundle $E_\alpha$ (and similarly for $E_\beta)$ is constructed as follows. The action $\alpha\colon\Z^d\to\mathrm{Homeo}(X)$ induces a map $\tilde{\alpha}\colon\Z^d\to\mathrm{Aut}(C(X))$. If $\lambda_{ij}\colon U_i\cap U_j\to \Z^d$ are transition functions for $\R^d\to \T^d$, then $\tilde\alpha\circ\lambda_{ij}$ are transition functions for $E_\alpha$. 
\end{rema}

The rich structure encoded in these two objects allows us to perform several $K$-theoretic manipulations that
simplify the problem. More precisely, we will reduce the problem to checking the commutativity of certain diagrams involving only basic $K$-theory elements and certain geometrically significant subspaces of $X$, such as the leaf, fibre, and base of the structure being considered.

\subsection{Reduction of twisted case}
Before continuing, let us turn to the twisted case and show how it reduces to the untwisted case, so that we may avoid repeating proofs with only minor variations. 

In essence, this is possible because any product involving the unit is constant along $A^{t\Theta}_{}$ ($t\in[0,1]$), so we would expect the isomorphism $K_0(A)\cong K_0(A^\Theta)$ to preserve the order unit. In order to give a rigorous proof, we need to first make some preparations, in the process establishing some useful notation for the rest of the paper.

In Section \ref{sec:prelims}, we briefly recalled Connes' Thom isomorphism, which in our setting gives an isomorphism $K_{*+d}(C(Y))\cong K_*(C^*(\tilde{\G}))$.
%$$K_{*+d}(A)\cong K_*(A\rtimes \R^d).$$ 
Let us explain how this arises through a $\KKK$-equivalence, by a result from \cite{skandalis:fack} (see also \cite{higkas:bc,nestmeyer:loc,val:shi} for the Dirac and Bott elements, and the Dirac-dual-Dirac method) . Denote the standard Dirac morphism on $\R^d$ by $D\in\KKK^{\Z^d}_d(C_0(\R^d),\C)$, and let $S^d$ denote  the $d$-fold suspension. 

\begin{convention*}
	We consider the algebra $S^d C(X)=C_0(X)\otimes C_0(\R^d)$ as a $\Z^d$-algebra under the \emph{diagonal} action; the action is \emph{not} trivial on the second tensor factor.

	Below, we use the notation $\jmath$ for Kasparov's descent morphism \cite[p. 172]{kas:descent}. We need not specify whether it's the full or reduced version, as it will only be used in cases where these two crossed products agree.
\end{convention*}

Let $1_{C(X)}$ denote the unit element in $\KKK^{\Z^d}(C(X),C(X))$. Applying Kasparov's descent map to
\begin{equation}
\label{eq DX}
D_X=D\hot 1_{C(X)}\in\KKK^{\Z^d}_d(S^dC(X),C(X)),
\end{equation}
we obtain a morphism $\jmath(D_X)\in \KKK(S^dC(X)\rtimes \Z^d,C^*(\G))$. Now Connes' Thom isomorphism can be expressed as the Kasparov product
\begin{equation}
\label{eq t}
t\coloneqq(M^{-1}\hatotimes_{S^dC(X)\rtimes \Z^d}\,\jmath(D_X) \,\hatotimes_{C^*(\G)}  \,m)\hatotimes-\colon K_{*+d}(C(Y))\to K_*(C^*(\tilde{\G})),
\end{equation}
where 
\begin{equation}
\label{eq M}
	M\in\KKK(S^dC(X)\rtimes \Z^d,C(Y)) 
\end{equation}
is the $\KKK$-class represented by Rieffel's imprimitivity bimodule
\cite[Corollary 1.7]{rieffel:proper} for the free and proper action of $\Z^d$ on $S^dC(X)$, and $m\in \KKK(C^*(\G),C^*(\tilde{\G}))$ is induced
by the imprimitivity bimodule of Lemma \ref{lem:morita}.

Fix an orientation-preserving homeomorphism $\R^d\cong(0,1)^d$, and let 
\begin{equation}
\label{eq f}
h\colon C_0(X\times \R^d)\to C_0(X\times (0,1)^d)	
\end{equation}
be the induced map. Let $b$ be the Bott element on $\R^d$, and let
\begin{equation}
\label{eq bX}
b_X=1_{C(X)}\hot b\in\KKK^{\Z^d}_d(C(X),S^dC(X)),
\end{equation}
so that $b_X\hot_{S^dC(X)} D_X = 1_{C(X)}$. For convenience, let us also define
\begin{equation}
\label{eq B}
B\coloneqq b_X\hot [h] \in \KKK_d^{\Z^d}(C(X),C_0(X\times (0,1)^d)).
\end{equation}
The fact that $D_X$ and $b_X$ are $\KKK$-equivalences and inverses of each other is a special case of the Dirac-dual-Dirac approach to the Baum--Connes conjecture, which is known to hold for the group $\Z^d$. For more details, in addition to \cite{higkas:bc,nestmeyer:loc,val:shi}, the reader may also see \cite[Section 3.9]{higgue:groupkth}.
\begin{lemm}\label{lem:twistunit}
The isomorphism $\phi\colon K_0(A)\to K_0(A^\Theta)$ in \eqref{eq twisted untwisted} takes $[1_{A}]$ to $[1_{A^\Theta}]$.
\end{lemm}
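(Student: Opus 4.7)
The plan is to exhibit $\phi$ as being induced by evaluation from a bundle $C^*$-algebra over $[0,1]$, inside which the units of the various twisted crossed products lift to a single element. Recall that $s\mapsto\sigma^s$ (for $s \in [0,1]$) defines a continuous path of normalised $2$-cocycles on $\Z^d$ interpolating the trivial cocycle with $\sigma$. The construction behind \cite[Theorem 4.2]{parae:twisttwo} assembles the corresponding twisted crossed products into a $C([0,1])$-algebra $\mathcal{A}$ with fibres $\mathcal{A}_s = A^{s\Theta}_\alpha$ (so $\mathcal{A}_0 = A_\alpha$ and $\mathcal{A}_1 = A^\Theta_\alpha$), for which each evaluation map $\mathrm{ev}_s\colon \mathcal{A}\to A^{s\Theta}_\alpha$ induces an isomorphism on $K$-theory. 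The isomorphism $\phi$ is then the composition $(\mathrm{ev}_1)_* \circ (\mathrm{ev}_0)_*^{-1}$.

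The key observation I would make is that the cocycle $\sigma^s$ is normalised, so it evaluates to $1$ whenever one of its arguments is $0\in\Z^d$. Consequently, the standard unital inclusion $\iota_s\colon C(X) \hookrightarrow A^{s\Theta}_\alpha$, sending $f \in C(X)$ to the $\Z^d$-indexed function supported at the neutral element with value $f$, is a $\ast$-homomorphism that does not depend on $s$. These fit together into a single unital $\ast$-homomorphism $\iota\colon C(X)\to \mathcal{A}$ satisfying $\mathrm{ev}_s\circ\iota = \iota_s$ for every $s$. In particular, the element $e \coloneqq \iota(1_{C(X)}) \in \mathcal{A}$ defines a class $[e]\in K_0(\mathcal{A})$ with $(\mathrm{ev}_s)_*[e] = [1_{A^{s\Theta}_\alpha}]$ for all $s\in[0,1]$.

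Combining these two steps, I obtain
\[
\phi([1_{A_\alpha}]) = (\mathrm{ev}_1)_*(\mathrm{ev}_0)_*^{-1}[1_{A_\alpha}] = (\mathrm{ev}_1)_*[e] = [1_{A^\Theta_\alpha}],
\]
as required. The main obstacle I anticipate is matching the abstract isomorphism $\phi$ of \cite{parae:twisttwo} with the evaluation-and-inverse-evaluation description used above: if the reference constructs $\phi$ in a more indirect manner, e.g. via a Kasparov product rather than a deformation, one would instead appeal to naturality with respect to the canonical unital inclusion $C(X)\hookrightarrow A^{s\Theta}_\alpha$, which is insensitive to the twisting and is intertwined with $\phi$ by the associated Morita-type equivalence.
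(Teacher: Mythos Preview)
Your core intuition---that the unit class is insensitive to the twist because the canonical inclusion $C(X)\hookrightarrow A^{s\Theta}_\alpha$ does not see the (normalised) cocycle---is correct and is morally why the lemma holds. However, your main argument rests on a description of Packer--Raeburn's isomorphism that does not match their actual construction, a risk you correctly flag at the end. The isomorphism of \cite[Theorem~4.2]{parae:twisttwo} is not obtained directly from evaluations on a $C([0,1])$-algebra with fibres $A^{s\Theta}_\alpha$; it instead passes through the stabilisation trick \cite[Theorem~3.4]{parae:twistone} (replacing the twisted crossed product by an untwisted one at the cost of tensoring with $\cK$), then through induction from $\Z^d$ to $\R^d$ and Connes' Thom, and finally through an identification with the section algebra of a $C^*$-bundle $E_\gamma\to\T^d$. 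So the displayed computation $\phi=(\mathrm{ev}_1)_*(\mathrm{ev}_0)_*^{-1}$ is an unverified reformulation, not the definition you are entitled to use.

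The paper's proof takes the complementary route of unwinding this chain explicitly. It tracks the class $[1_{C(X)}\otimes p]$ (for $p$ a rank-one projection in $\cK$) through stabilisation, where it is unchanged by \cite[Lemma~3.3]{parae:twistone}; through Connes' Thom, where it becomes the Bott class $b\otimes 1_{C(X)}\otimes[p]$; and then into $K_d(\Gamma(E^1))$, where it is supported over a trivialising open set. The final isomorphism $K_d(\Gamma(E^1))\cong K_d(\Gamma(E^0))$ is implemented locally by a continuous family of automorphisms of $C(X)\otimes\cK$, and these fix $[1_{C(X)}\otimes p]$ because every automorphism of $\cK$ is inner. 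This is more laborious than your proposal but engages with $\phi$ as actually constructed, so no matching step is required. Your fallback suggestion of appealing to naturality with respect to the inclusions $\iota_s$ is in the right spirit, but to make it rigorous you would still need to exhibit exactly which maps in Packer--Raeburn's chain intertwine the $\iota_s$, which in effect brings you back to the paper's strategy.
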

\begin{proof}
As explained in \cite[Section 4]{parae:twisttwo}, the isomorphism $\phi\colon K_0(A)\to K_0(A^\Theta)$ can be understood via an isomorphism of two $C^*$-bundles $E^0$ and $E^1$, which we now recall. Having done this, it will suffice to show that the unit $1_{E^0}\in K_0(\Gamma(E^0))$ is mapped to the unit $1_{E_1}\in K_1(\Gamma(E^1))$, where $\Gamma(E^i)$ is the $C^*$-algebra of sections of $E^i$, $i=0,1$.

To construct $E^i$, we first use the Packer-Raeburn stabilisation trick \cite[Theorem 3.4]{parae:twistone} to eliminate the twist associated to $\sigma$ as follows. Let $\cK$ denote the algebra of compact operators and $\alpha$ the $\Z^d$-action on $A$. For each $t\in[0,1]$, there exists an action 
$$\gamma^t=\gamma^t(\alpha,\sigma^t)\colon \Z^d\to \mathrm{Aut}(C(X)\otimes \cK(L^2(\Z^d)))$$ 
and a $C^*$-isomorphism 
\begin{equation}
\label{eq:prstab}
(C(X)\otimes\cK(L^2(\Z^d))) \rtimes_{\alpha\otimes 1,\sigma^t} \Z^d \cong (C(X)\otimes\cK(L^2(\Z^d)))\rtimes_{\gamma^t} \Z^d,
\end{equation}
where on the left-hand side the $\Z^d$-action is trivial on $\cK$, so that it is isomorphic to $C(X) \rtimes_{\alpha,\sigma^t} \Z^d\otimes\cK$. For convenience, let us set $B_X=C(X)\otimes \cK(L^2(\Z^d))$. By \cite[Lemma 3.3]{parae:twistone}, the isomorphism \eqref{eq:prstab} preserves the natural inclusion of $B_X$ into the crossed product. For each $t$, define 
$$E^t=\R^d\times_{\Z^d} B_X \to\T^d,$$
where $\Z^d$ acts on $B_X$ via $\gamma^t$. 
%Analogously, we can define for each $t\in[0,1]$ a bundle $E^t$ via the action $\gamma^t=\gamma^t(\alpha,\sigma^t$).
Thus if
$\lambda_{ij}\colon U_i\cap U_j\to \Z^d$ are the transition functions for $\R^d$ (viewed as a principal $\Z^d$-bundle over $\T^d$), then
$\gamma^t\circ\lambda_{ij}$ are the transition functions for $E^t$, where $\gamma^t=\gamma(\alpha,\sigma^t)$. The family $\{E^t\}_{t}$ fits together into a bundle 
$$e\colon E^{[0,1]}\to \T^d\times [0,1],$$ 
giving an isomorphism $E^0\xrightarrow{\cong}E^1$ of $C^*$-bundles (see \cite[Section 3, Corollary 4.6]{huse:fb}). Denote the resulting $K$-theoretic isomorphism by
\begin{gather}
\label{eq:isobundles}
\psi\colon K_d(\Gamma(E^0))\to K_d(\Gamma(E^1)).
\end{gather}

Next, to make the connection with the algebras $A$ and $A^\Theta$, we use the induction functor from $\Z^d$ to $\R^d$. For a $C^*$-algebra $P$ with a $\Z^d$-action $\mu$, this is defined by
\begin{gather}
\label{eq:tctw1}
\Ind_{\Z^d}^{\R^d}(P,\mu)= (C_0(\R^d)\otimes P) \rtimes_{\text{rt}\otimes\mu} \Z^d,
\end{gather}
where $\text{rt}$ is the action of $\Z^d$ on $C_0(\R^d)$ by right-translation. (The main result in \cite{brown:proper}, implies that this model for induction used is Morita equivalent to the more familiar one described as a generalised fixed-point algebra.) By \cite[Proposition 2.1]{parae:twistthree}, we have isomorphisms
\begin{align*}
%\label{eq:tau}
\tau_0\colon K_0(A)\cong K_d(\Ind_{\Z^d}^{\R^d}(B_X,\gamma^0))&=K_d((C_0(\R^d)\otimes B_X)\rtimes_{\text{rt}\otimes \gamma^0} \Z^d),\\
\tau_1\colon K_0(A^\Theta)\cong K_d(\Ind_{\Z^d}^{\R^d}(B_X,\gamma^1))&=K_d((C_0(\R^d)\otimes B_X)\rtimes_{\text{rt}\otimes \gamma^1} \Z^d).
\end{align*}
By \cite[Lemma 4.3]{parae:twisttwo}, we also have isomorphisms
\begin{align}
\label{eq:omega}
\omega_t\colon K_d((C_0(\R^d)\otimes B_X)\rtimes_{\text{rt}\otimes\gamma^t} \Z^d)\cong K_d(\Gamma(E^t)),
\end{align}
for each $t\in[0,1]$. For $i=0,1$, let $\rho_i=\omega_i\circ\tau_i$. Then by \cite[Section 4]{parae:twisttwo}, we have a commutative diagram
\[
\begin{tikzcd}
K_0(A)\ar{r}{\rho_0}\ar{d}{\phi} & K_d(\Gamma(E^0))\ar{d}{\psi}\\
K_0(A^\Theta)\ar{r}{\rho_1} & K_d(\Gamma(E^1)).	
\end{tikzcd}
\]

Thus it remains to show that $\psi(1_E^0)=1_E^1$. By Connes' Thom isomorphism, 
\begin{gather}
\label{eq:tctw2}
K_0(\R^d\ltimes \Ind_{\Z^d}^{\R^d}(B_X,\gamma^t))\cong K_d(\Ind_{\Z^d}^{\R^d}(B_X,\gamma^t)).
\end{gather}
The algebra $\R^d\ltimes \Ind_{\Z^d}^{\R^d}(B_X,\gamma^t)$ can be rewritten as
\begin{equation}
\label{eq K BX}
((\R^d\ltimes C_0(\R^d)) \otimes B_X) \rtimes \Z^d \cong \cK(L^2(\R^d))\otimes (B_X\rtimes_{\gamma^t}\Z^d),
\end{equation}
where the isomorphism holds because the induced action $\Z^d\to \mathrm{Aut}(\cK(L^2(\R^d)))$ has trivial Mackey obstruction \cite[Proposition D~27]{danawil:crossprod}. Let $p$ and $q$ be minimal projections in $\cK(L^2(\Z^d))$ and $\cK(L^2(\R^d))$ respectively, so that $[q]\otimes
[1_{C(X)}\otimes p]$ is the unit in $ K_0(\cK(L^2(\R^d))\otimes (B_X\rtimes_{\gamma^t}\Z^d))$ for each $t$. Eq. \eqref{eq K BX} implies that this element is mapped to $b\otimes [1_{C(X)}\otimes p]\in K_d(\Ind_{\Z^d}^{\R^d}(B_X,\gamma^t))$ under \eqref{eq:tctw2}, where $b\in K_d(C_0(\R^d))$ is the Bott element.
%
%We need to follow the element $[q]\otimes
%1_{C(X)}\otimes [p]\in K_0(\cK(L^2(\R^d))\otimes (B_X\rtimes \Z^d))$ through the following chain of isomorphisms:
%\begin{align*}
%&\,K_0(\cK(L^2(\R^d))\otimes (B_X\rtimes \Z^d))\cong K_0((((\R^d\ltimes C_0(\R^d)) \otimes B_X) \rtimes \Z^d)\\
%\cong&\,K_0(\R^d\ltimes \Ind_{\Z^d}^{\R^d}(B_X))\cong K_d(\Ind_{\Z^d}^{\R^d}(B_X))\cong\,K_d(\Gamma(E^1))\cong K_d(\Gamma(E^0)).
%\end{align*}
%In view of \eqref{eq K BX}, the isomorphism \eqref{eq:tctw2} has the effect of sending $[q]\otimes
%1_{C(X)}\otimes [p]$ to $b\otimes 1_{C(X)}\otimes [p]\in K_d(\Ind_{\Z^d}^{\R^d}(B_X))$, where $b$ is the Bott element in $K_d(C_0(\R^d))$. 
The maps $\omega_t$ in \eqref{eq:omega} then send $b\otimes[1_{C(X)}\otimes
p]$ to $1_{E^t}\in K_d(\Gamma(E^t))$. 

We can realize $1_{E^0}$ and $1_{E^1}$ explicitly as follows. Take an open subset $U\subseteq \T^d$ that locally trivializes both $E^0$ and $E^1$ and for which there exists a local trivialization $h\colon U\times [0,1] \times B_X \xrightarrow{\cong} e^{-1} (U\times [0,1])$ of $E^{[0,1]}$. 
%assume $U\cong \R^d$, since 
Let us write $b$ as $[b_U+e_{11}] - [e_{11}]$, with $b_U$ supported inside $\pi^{-1}(U)$ where $\pi\colon\R^d\to\T^d$ is the canonical quotient map, and $e_{11}$ is the standard matrix unit rank-one projection. By the preceding discussion, $1_{E^0}$ and $1_{E^1}$ are represented (omitting $e_{11}$) respectively by 
\begin{align*}
b_U(\cdot)\cdot h(\cdot\,,0, 1_{C(X)}\otimes p)\quad\textnormal{ and }\quad
b_U(\cdot)\cdot h(\cdot\,,1, 1_{C(X)}\otimes p).
\end{align*}
By construction of the bundle isomorphism $E^0\cong E^1$ (see the proof of \cite[Section 3, Theorem 4.3]{huse:fb}), $\psi$ takes one to the other. This completes the proof.
%By construction of the isomorphism $E^0\cong E^1$ (see the proof of \cite[Section 3, Theorem 4.3]{huse:fb}), it is clear that $u\colon \Gamma(E^0) \to \Gamma(E^1)$ sends $1_{E^0}$ to $1_{E^1}$. 
\end{proof}

\subsection{Identification of unit (untwisted case)}
\label{subsec untwisted}
In this section we identify the unit inside the mapping torus under the isomorphism $K_0(A)\cong K^{d}(Y)$.
By Lemma \ref{lem:twistunit}, it suffices to consider $A$ instead of $A^\Theta$.

We begin by observing that the mapping torus $Y$ can be identified with a quotient of the cylinder $X\times [0,1]^d$. Let $Y^0=X\times (0,1)^d$, viewed as a common subset of $Y_\alpha$ and $Y_\beta$. 
%The subspace $Y^0$ plays a fundamental role in the structure of the mapping torus because both
%$Y_\alpha$ and $Y_\beta$ contain a copy of $Y^0$, and the identification between one and the other copy is independent of the
%action. Moreover, 
Then $Y^0$ is a ``basic block'' in the structure of the mapping tori, in the sense that any difference in structure between $Y_\alpha$ and $Y_\beta$ must lie away from $Y^0$ and arise only from the way in which the boundary components of $X\times [0,1]^d$ are glued. Considering the map \eqref{eq f}, we can speak of the Bott element $b_0\in K^d(Y^0)$.

\begin{defi}\label{def:basic}
Let $\iota\colon C_0(Y^0)\to C(Y)$ denote the open inclusion. Consider the following (not necessarily commutative) diagram:
\begin{equation}
\label{eq basic diagram}
	\xymatrix{K^d(Y^0) \ar[r]^-{\iota_{\beta *}} \ar[dr]_-{\iota_{\alpha *}} & K^d(Y_\beta) \ar[d]^-{F^*} \\
	& K^d(Y_\alpha).}
\end{equation}
We say that a map $F\colon Y_\alpha \to Y_\beta$ is \emph{basic} if $F^* \iota_{\beta *}(b_0)=\iota_{\alpha *}(b_0)$.
\end{defi}

The next lemma provides a useful factorisation of $\iota$ in terms of $\KKK$-elements.
\begin{lemm} 
\label{lem:fac}
Let $\lambda\colon C_0(X\times (0,1)^d)\to C_0(X\times \R^d)$ be the inclusion given by extending functions by zero on the complement of $X\times (0,1)^d$ in $X\times \R^d$. Let $M$ be as in \eqref{eq M}. Then
\begin{gather*}
[\iota] = [\lambda]\hot_{S^dC(X)} I\hot_{S^dC(X)\rtimes \Z^d}\,M,
\end{gather*}
where the element $I\in\KKK(S^dC(X),S^dC(X)\rtimes \Z^d)
$ is induced by the standard embedding at $0\in\Z^d$, i.e., $a\mapsto a\delta_0$.
\end{lemm}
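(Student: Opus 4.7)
The plan is to evaluate the right-hand side as an explicit Kasparov cycle in $\KKK(C_0(X\times(0,1)^d),C(Y))$ and identify it with a standard cycle representing $[\iota]$.

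I would first unpack the triple product. The element $I$ is represented by the $*$-homomorphism $i\colon S^dC(X)\hookrightarrow S^dC(X)\rtimes\Z^d$ sending a function to its image at the identity of $\Z^d$, so $[\lambda]\hot I$ coincides with $i\circ\lambda$, which extends a function in $C_0(X\times(0,1)^d)$ by zero and then embeds it at $0\in\Z^d$. Taking the further product with $M$, realised by the Rieffel imprimitivity bimodule $\mathcal{E}$ (the completion of $C_c(X\times\R^d)$ under the $C(Y)$-valued inner product $\langle\xi,\eta\rangle(p(z))=\sum_{n\in\Z^d}\overline{\xi(n\cdot z)}\eta(n\cdot z)$), produces the Kasparov cycle $(\mathcal{E},\pi,0)$, where $\pi(g)\xi=\lambda(g)\xi$ is pointwise multiplication by the extension of $g$. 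Crucially, the image $\pi(g)\xi$ is always supported inside $X\times(0,1)^d$.

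The key geometric input is that $(0,1)^d$ is a fundamental domain for the $\Z^d$-translation action on $\R^d$, so its translates are pairwise disjoint and $p|_{X\times(0,1)^d}$ is a homeomorphism onto $Y^0\subset Y$. For $\xi,\eta$ supported in $X\times(0,1)^d$, only the $n=0$ term in the defining sum survives, giving
$$\langle\xi,\eta\rangle_{C(Y)}(p(z))=\overline{\xi(z)}\,\eta(z),\qquad z\in X\times(0,1)^d.$$
Push-forward along $p$ therefore induces an isomorphism of Hilbert $C(Y)$-modules between the sub-module $\mathcal{E}_0:=\overline{C_c(X\times(0,1)^d)}\subset\mathcal{E}$ and the ideal $C_0(Y^0)\subset C(Y)$; furthermore it intertwines the restriction $\pi|_{\mathcal{E}_0}$ with the action of $C_0(Y^0)$ on itself by multiplication. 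Consequently $(\mathcal{E}_0,\pi|_{\mathcal{E}_0},0)=(C_0(Y^0),\iota,0)$, which is a standard Kasparov representative of $[\iota]$.

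To finish, one argues that $(\mathcal{E},\pi,0)=(\mathcal{E}_0,\pi|_{\mathcal{E}_0},0)$ in $\KKK$. Since $\pi(g)\cdot\mathcal{E}\subset\mathcal{E}_0$ for every $g$, the orthogonal complement of $\mathcal{E}_0$ carries no $C_0(X\times(0,1)^d)$-action and should contribute only a degenerate summand. The principal technical point is to make this reduction rigorous, because $\mathcal{E}_0$ is not in general orthogonally complemented in $\mathcal{E}$---the characteristic function of $X\times(0,1)^d$ is discontinuous and fails to define an adjointable projection in $\mathcal{L}(\mathcal{E})$. One resolves this either by stabilising both cycles via Kasparov's absorption theorem and comparing representations in $\mathcal{L}(\mathcal{H}_{C(Y)})$ up to compact perturbation, or by constructing an explicit operator homotopy of cycles through a family of smooth cut-off functions exhausting $(0,1)^d$.
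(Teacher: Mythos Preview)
Your approach is correct and arrives at the same Kasparov cycle $(\mathcal{E},\pi,0)$ as the paper, with the same underlying geometric observation that $(0,1)^d$ is a fundamental domain. The difference lies in how the identification with $[\iota]$ is carried out. Rather than isolating the submodule $\mathcal{E}_0$ and confronting the complementation problem, the paper exploits the \emph{two-sided} structure of the Rieffel bimodule: it writes $I(\lambda(f))$ as a \emph{left} inner product $(\bar f\,|\,\zeta)$, with $\zeta$ supported in $X\times(0,1)^d$ and equal to $1$ on $\mathrm{supp}(f)$, and then observes that the corresponding \emph{right} inner product $\langle\bar f,\zeta\rangle$ is exactly $\iota(f)$. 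From this the paper concludes directly that $\iota=(I\circ\lambda)^*(M)$. This packaging via the imprimitivity relation $(\xi|\zeta)\cdot\eta=\xi\cdot\langle\zeta,\eta\rangle$ is slicker, since it shows in one stroke that $\pi(f)$ acts as the ``rank-one'' operator $\theta_{\bar f,\zeta}$ on $M$ while $\iota(f)$ is recovered from the same data on the $C(Y)$ side; one does not need to produce an adjointable projection onto $\mathcal{E}_0$. That said, the paper's final ``hence'' is itself rather compressed, and your explicit acknowledgement of the non-complementation issue---together with the proposed resolutions via stabilisation or a cut-off homotopy---is a more honest account of what remains to be checked. Either route closes the argument; the paper's is shorter, yours is more transparent about the $\KKK$-theoretic bookkeeping.
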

\begin{proof}
The $C^*$-correspondence $I\hot M$ is defined as a completion of $C_c(X\times \R^d)$. The unital $C^*$-algebra $C(Y)$ acts from
the right as bounded continuous functions via pullback along the quotient map $p : X\times \R^d \to Y$. The full
$C(Y)$-valued inner product on $C_c(X\times \R^d)$ is defined as 
\begin{equation*}%\label{eq:innprodL}
\bra{\xi}\ket{\zeta}(y) =\sum_{p(x) = y} \bar{\xi}(x)\zeta(x),
\end{equation*}
where $\xi, \zeta$ are in $C_0(X\times \R^d)$ and $y \in Y$. Define the bimodule underlying $M$ to be the completion of
$C_c(X\times\R^d)$ with respect to the induced norm. The left action of $S^dC(X)$ is given by pointwise multiplication. The full $S^dC(X) \rtimes \Z^d$-valued left inner product is defined as
\begin{equation*}%\label{eq:innprodR}
(\xi|\zeta)(\mathbf{n})=\bar{\xi}(\mathbf{n}\cdot \zeta)
\end{equation*}
with $\xi,\zeta \in C_c(X\times \R^d)$, giving a function in $C_c(\Z^d,S^dC(X))$. Now given $f\in C_c(X\times (0,1)^d)$, we
express $I(\lambda(f))$ as $(\xi|\zeta)$ (\emph{left} inner product) by choosing $\xi=\bar{f}$ and $\zeta$ such that $\xi\zeta=\xi$ (note that
$\bar{\xi}(\mathbf{n}\cdot \zeta)=0$ for any nontrivial vector $\mathbf{n}\in\Z^d$). Now notice that computing the \emph{right} inner product, we
get that $\bra{\xi}\ket{\zeta}=\iota(f)$. Having this, a routine argument shows that $\iota^*[C(Y)] \cong  (I\circ \lambda)^*[M]$ (cf. \cite[Prop.~18.3.6]{black:kth}).
\end{proof}

Let $m\in \KKK(C^*(\G),C^*(\tilde{\G}))$ denote the Morita equivalence element from Lemma \ref{lem:morita}, and let
\begin{equation}
\label{eq i}
%i\in \KKK(C(X),C^*(\G))	
i\colon C(X)\hookrightarrow C^*(\G)	
\end{equation}
be the unit space inclusion. We refer to the map
\begin{equation}
\label{eq transfer}
i^*(m)\coloneqq i^*(m)\hot - \colon K_0(C(X)) \to K_0(C^*(\tilde{\G})),
\end{equation}
as the \emph{transfer map}, since it transports projections living on the transversal of the foliation to the associated foliation $C^*$-algebra (cf.  \cite[Chapter 2, Section 8]{connes:ncg},\cite[Section 3]{putkam:gap}).

Following Connes \cite[p. 126]{connes:ncg}, 
%and using the terminology from Remark \ref{rem:hol}, 
we can give an explicit formula for the map $i^*(m)$. Let $N$ be the transversal \eqref{eq N}. Let $V=Y\times (-\epsilon,\epsilon)^d \subseteq \tilde{\G}$ be an open neighbourhood of the unit space. We may take $\epsilon$ small enough so that if $\gamma\in V$ with $s(\gamma)\in N$ and $r(\gamma)\in N$, then
$\gamma\in Y=\tilde{\G}^{(0)}$. Take $\eta\in C_c(r^{-1}(N))$ with $\mathrm{supp}(\eta)\subseteq V$ and $\int \lvert
\eta(\gamma)\rvert^2=1$, where the integral is taken over $\{\gamma\in \tilde{\G}\mid r(\gamma) =y\}$ for any fixed $y\in N$. For
instance, given a compactly supported function $f\colon (-\epsilon,\epsilon)^d\to \R$ with $\int \lvert f\rvert ^2=1$, we can set
$\eta(\gamma)=\xi(y,r)=f(r)$.

We can then give a concrete description of the element $i^*(m)([1_X])$ as
\begin{gather*}
	i^*(m)([1_X]) = [e]\in K_0(C^*(\tilde{\G})),\\
e(\gamma)  = \sum_{\substack{s(\gamma^\prime)=s(\gamma)\\ r(\gamma^\prime)\in N}}  \bar{\eta}(\gamma^\prime \gamma^{-1})\eta(\gamma^\prime).
\end{gather*}

\begin{lemm}\label{lem:trlem}
We have $i^*(m)([1_X])=[e]$.
\end{lemm}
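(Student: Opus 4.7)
The plan is to verify the formula by realising $\eta$ as a generating vector of the imprimitivity bimodule $Z = s^{-1}(N)$ from Lemma~\ref{lem:morita} and computing both groupoid-equivalence inner products explicitly. First I would view $\eta$, originally defined on $r^{-1}(N) \subseteq \tilde{\G}$, as an element of $C_c(Z)$ via the canonical groupoid inversion identifying $r^{-1}(N)$ with $s^{-1}(N) = Z$. The support condition $\mathrm{supp}(\eta)\subseteq V$ combined with the fibrewise normalisation $\int_{r^{-1}(y)}|\eta|^2 = 1$ is precisely what makes $\eta$ a ``unit frame'' for the Morita bimodule.

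Second, I would compute the left $C^*(\G)$-valued inner product. By the standard formula for a groupoid-equivalence bimodule, $_{C^*(\G)}\!\langle \eta, \eta\rangle$ evaluated at $(x, \mathbf{n}, \alpha(\mathbf{n})(x)) \in \G$ is a convolution-type integral over elements $\gamma', \gamma'' \in Z$ whose product $\gamma'(\gamma'')^{-1}$ in $\tilde{\G}$ represents the given element of $\G$ via the subgroupoid inclusion \eqref{eq subgroupoid}. The neighbourhood $V$ was chosen exactly so that any element of $V$ whose source and range both lie in $N$ belongs to the unit space $Y$. This forces the only nontrivial contributions to come from $\mathbf{n}=0$, where the convolution collapses to the fibre integral $\int |\eta|^2 = 1$ and produces the characteristic function of $X$, i.e.\ the unit $1_{C^*(\G)} = i(1_X)$.

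Third, I would invoke the standard imprimitivity-bimodule identity: if $\eta$ is a vector in an $(A,B)$-equivalence bimodule with $_A\!\langle \eta, \eta\rangle = p$ a projection in $A$, then $\langle \eta, \eta\rangle_B$ is also a projection in $B$, and the induced Morita $KK$-equivalence sends $[p] \mapsto [\langle \eta, \eta\rangle_B]$ in $K$-theory. Applied to $p = 1_{C^*(\G)}$, this yields $i^*(m)([1_X]) = [\langle \eta, \eta\rangle_{C^*(\tilde{\G})}]$. It then remains only to unwind the definition of the right $C^*(\tilde{\G})$-valued inner product; substituting the groupoid composition law and using that $\eta$ is supported on $r^{-1}(N)$ produces exactly the stated formula for $e(\gamma)$.

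The main obstacle is the second step, namely identifying $_{C^*(\G)}\!\langle \eta, \eta\rangle$ with the unit of $C^*(\G)$. This requires carefully tracking the convention for the inner products on a groupoid equivalence bimodule and checking that the smallness of $V$ eliminates contributions from any nontrivial $\Z^d$-translate, so that only the fibrewise $L^2$-norm of $\eta$ survives. Once these conventions are pinned down, the verification of the formula for $e$ is essentially bookkeeping.
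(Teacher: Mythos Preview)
Your proposal is correct and uses essentially the same ingredients as the paper, but the execution differs slightly. The paper works directly with the right Hilbert $C^*(\tilde{\G})$-module (realised on $C_c(r^{-1}(N))$), observes that $\langle\eta,\eta\rangle = e$, and then verifies the frame condition $\eta\langle\eta,f\rangle = f$ for all $f$ by an explicit convolution computation using the smallness of $V$. You instead compute the \emph{left} $C^*(\G)$-valued inner product $_{C^*(\G)}\!\langle\eta,\eta\rangle$ and show it equals $1_{C^*(\G)}$, then invoke the imprimitivity compatibility relation $_A\!\langle\xi,\zeta\rangle\cdot\omega = \xi\cdot\langle\zeta,\omega\rangle_B$ to conclude. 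These two computations are formally equivalent: your equality $_{C^*(\G)}\!\langle\eta,\eta\rangle = 1$ applied to an arbitrary $f$ via the compatibility relation \emph{is} the frame identity the paper checks. The paper's route is marginally more direct since it never needs to write down the left inner product on the equivalence bimodule, while yours has the advantage of making the role of the imprimitivity structure explicit.
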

\begin{proof}
The element $i^*(m)([1_X])$ is represented by the right Hilbert $C^*(\tilde{\G})$-module $m$, ignoring the left action. We show
	that the idempotent associated to this module is precisely $e$. To do this, we find a frame \cite[Theorem 4.1]{frank:frames}, i.e. a
	sequence $\{\xi_i\}$ such that for any element $f$ in the module, we have $f=\sum_i\xi_i\bra{\xi_i}\ket{f}$. Then $e$ should take the form $(e)_{ij}=\bra{\xi_i}\ket{\xi_j}$. 
	
	Recall that by
	definition $m$ is represented by the completion of $C_c(r^{-1}(N))$ with respect to the inner
product
\[
\bra{\xi}\ket{\zeta}(\gamma)= \sum_{\substack{s(\gamma^\prime)=s(\gamma)\\ r(\gamma^\prime)\in N}}\bar{\xi}(\gamma^\prime \gamma^{-1})\zeta(\gamma^\prime).
\]
Notice that $\bra{\eta}\ket{\eta}=e$, which suggests that $\{\eta\}$ is a frame. Indeed, for any $f\in C_c(r^{-1}(N))$, we have:
\begin{align*}
f(\gamma)&\,\,\,= \int_{\gamma_1\gamma_2=\gamma}\sum_{\substack{s(\gamma^\prime)=s(\gamma)\\ r(\gamma^\prime)\in N}} \eta(\gamma_1)\bar{\eta}(\gamma^\prime \gamma_2^{-1})f(\gamma^\prime) \\
& \underset{\gamma=\gamma^\prime}{=} \int \eta(\gamma_1)\bar{\eta}(\gamma_1)\cdot f(\gamma) = f(\gamma),
\end{align*}
where the second equality holds because $\gamma_1\gamma_2(\gamma^\prime)^{-1}$ belongs to $V$.
\end{proof}

% \begin{rema}
% 	Both $I$ and $M$ are actually given by $\R^d$-equivariant cycles, where $\R^d$ acts on
% 	$C(X)\otimes C_0(\R^d)$ by translation on the second factor (this action clearly commutes with the diagonal $\Z^d$-action). If we
% 	denote by $\jmath=\jmath_{\R^d}$ the descent morphism associated to this $\R^d$-action, we can show that $\jmath(I)=i$ and
% 	$\jmath(M)=m$. As a result, we can write the transfer map as $i_*(m)=\jmath(I) \hatotimes \jmath(M)$.
% \end{rema}

We are now ready to state and prove an alternative description of the transfer map \eqref{eq transfer} in terms of simpler and more familiar morphisms. 

\begin{prop}\label{prop:transfer}
The transfer map factors as
$$i^*(m)=B \hot_{C(X\times (0,1)^d)} [\iota] \hot _{C(Y)}\, t,$$
where $t$, $B$, and $\iota$ are as in \eqref{eq t}, \eqref{eq B}, and Definition \ref{def:basic} respectively.
In particular, the following diagram commutes:
\[
\begin{tikzcd}
K_0(C(X)) \arrow[r, "i^*(m)"] \arrow[d, "B"]      & K_0(C^*(\tilde{\G})) \ \\
K_d(C(X\times (0,1)^d)) \ar[r, "\iota_*"] & K_d(C(Y))  \arrow[u, "t"].             
\end{tikzcd}
\]
\end{prop}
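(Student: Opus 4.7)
The plan is to unfold every factor on the right-hand side via its defining Kasparov product, apply naturality of Kasparov's descent map, and then appeal to the Bott duality $b_X \hot D_X = 1_{C(X)}$ together with a small homotopy argument.

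First I would substitute $B = b_X \hot [h]$ from \eqref{eq B}, $[\iota] = [\lambda] \hot I \hot M$ from Lemma \ref{lem:fac}, and $t = M^{-1} \hot \jmath(D_X) \hot m$ from \eqref{eq t}. The pair $M \hot M^{-1}$ cancels and the right-hand side becomes
\[
B \hot [\iota] \hot t = b_X \hot [h] \hot [\lambda] \hot I \hot \jmath(D_X) \hot m.
\]
Next I would invoke the standard naturality of Kasparov descent: for any $\Z^d$-equivariant Kasparov element $y\in\KKK^{\Z^d}(A,B)$, the unit-algebra inclusions $\iota_A\colon A\to A\rtimes\Z^d$ and $\iota_B\colon B\to B\rtimes\Z^d$ satisfy $[\iota_A]\hot\jmath(y)=y\hot[\iota_B]$. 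Applied with $y=D_X$, this rewrites $I\hot\jmath(D_X)$ as $D_X\hot[i]$, where $[i]$ denotes the class of the unit-space inclusion from \eqref{eq i}.

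The remaining step is to argue that $[h]\hot[\lambda]$ is the identity in $\KKK(C_0(X\times\R^d),C_0(X\times\R^d))$. Since $\lambda\circ h$ acts as the identity on the $C(X)$-tensor factor, this reduces to the analogous statement on $C_0(\R^d)$. Passing to one-point compactifications, $\lambda\circ h$ corresponds to a pointed self-map of $S^d$ which restricts to the chosen homeomorphism $(0,1)^d\cong\R^d$ on the ``inside'' and collapses the complement to the basepoint at infinity; this is a degree-one map, hence pointed-homotopic to the identity. Promoting the homotopy to a homotopy of $\KKK$-cycles yields the required equality. Combining the two ingredients with $b_X\hot D_X=1_{C(X)}$ gives
\[
B\hot[\iota]\hot t = b_X\hot D_X\hot[i]\hot m = [i]\hot m = i^*(m),
\]
and the commutative diagram displayed in the statement is just the image of this identity under $K_0$.

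The main obstacle I anticipate is the identification $[h]\hot[\lambda]=1$: neither $h$ nor $\lambda$ is $\Z^d$-equivariant with respect to the translation action, so the argument must be performed after forgetting equivariance, and one should verify that the pointed homotopy $S^d\times[0,1]\to S^d$ lifts to a genuine operator homotopy of $\KKK$-cycles (after tensoring with $1_{C(X)}$). Once that is in place, every other step is a formal manipulation in the Kasparov category.
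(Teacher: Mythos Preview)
Your argument is correct and follows the same strategy as the paper: expand all three factors, cancel $M\hot M^{-1}$, dispose of $[h]\hot[\lambda]$, and then reduce the remaining product via a compatibility between the unit-space inclusion and Kasparov descent. The only difference is cosmetic: the paper applies the naturality relation to $b_X$ (showing $b_X\hot I=[i]\hot\jmath(b_X)$ by direct inspection of the Kasparov cycle) and then cancels $\jmath(b_X)\hot\jmath(D_X)$, whereas you apply it to $D_X$ (rewriting $I\hot\jmath(D_X)=D_X\hot[i]$) and cancel $b_X\hot D_X$ first; these are two instances of the same general fact. Your degree-one homotopy argument for $[h]\hot[\lambda]=1$ is also what the paper has in mind when it calls this step ``easy''.
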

\begin{proof}
Expanding definitions and using Lemma \ref{lem:fac}, we see that the composition $B \hot [\iota] \hot t$ is given by
\begin{equation}\label{eq:mcomp}
b_X\hot [h] \hot [\lambda] \hot I \hot M \hot M^{-1}\hot\jmath(D_X)\hot m,
\end{equation}
where $b_X$ is the Bott element \eqref{eq bX}, $h$ is the homeomorphism \eqref{eq f}, $\lambda$ is the open inclusion $C_0(X\times (0,1)^d)\to C_0(X\times \R^d)$, and $I\in\KKK(S^dC(X),S^dC(X)\rtimes \Z^d)$ is induced by the standard embedding at $0\in\Z^d$ ($a\mapsto a\delta_0$).
As $\lambda\circ h$ is properly homotopic to the identity, we have $[h] \hot [\lambda]=1_{\R^d}$, so Eq. \eqref{eq:mcomp} equals
\[
b_X \hot I \hot \jmath(D_X)\hot m.
\]
Let $[i]\in\KKK(C(X), C^*(\G))$ be the element corresponding to the inclusion \eqref{eq i}. We see it suffices to show that $b_X \hot I = [i] \hot \jmath(b_X)$. Write $b_X=[\pi,H,F]$. Then 
\[
[i] \hot \jmath(b_X)=[\pi \otimes 1, H\otimes_{C(X)} C(X)\rtimes \Z^d, F\otimes 1],
\]
which by definition of the Kasparov product equals $b_X\hot I$.
\end{proof}

\begin{rema}
The combination of Proposition \ref{prop:transfer} and Lemma \ref{lem:trlem} above provides an alternative proof of \cite[Proposition 3.2]{putkam:gap}.
\end{rema}

We are now ready to prove the main results of this section. We will consider maps $F\colon Y_\alpha \to Y_\beta$ between mapping tori that satisfy at least one of the following properties:
\begin{itemize}[$\bullet$]
	\item $F$ is basic in the sense of Definition \ref{def:basic};
	\item $F$ is an isomorphism of bundles over $\T^d$ with fibre $X$; or
	\item $F$ is an isomorphism of foliated spaces.
\end{itemize}

Let us recall that a foliated space structure on $Y_\alpha$ is roughly given by a (countable) atlas of open charts $\{(U_i,\psi_i)\}$ of
$Y_\alpha$ such that $\psi_i \colon U_i \to X\times \R^d$ is a homeomorphism and $\psi_j\circ
\psi_i^{-1}(x,\mathbf{r})=(\Psi^1(x),\Psi^2(x,\mathbf{r}))$ with $\Psi^2(x,\cdot)$ a diffeomorphism for any $x$ (analogously for $Y_\beta$). For more details on the definition, we refer to \cite[Chapter 2]{mooscho:globanalysis}. 

Note that $Y^0\subseteq Y_\alpha$ is a possible choice of
chart $(U,\psi)$, where the map $\psi$ is essentially given by fixing a smooth map $(0,1)^d\cong \R^d$. By contrast, the change of coordinates for the fibre bundle takes the form $\psi_j\circ \psi_i^{-1}(x,\mathbf{r})=(\Psi^1(x,\mathbf{r}),\Psi^2(\mathbf{r}))$, so that the fibre is allowed to transform depending on the basepoint.

Define $\Phi\colon K_0(A) \to K^d(Y)$ to be the composition
\begin{equation}\label{eq:phidef}
\xymatrixcolsep{3pc}\xymatrix{K_0(A) \ar[r]^-{m} & K_0(C^*(\tilde{\G})) \ar[r]^-{t^{-1}} & K^d(Y)},
%K_0(A)\xrightarrow{m}K_0(C^*(\tilde{\G}))\xrightarrow{t} K^d(Y)
\end{equation}
where $m$ is the Morita equivalence from Lemma \ref{lem:morita}, and $t$ is as in \eqref{eq t}.%Define $\Phi_\alpha\colon K_0(A_\alpha) \to K^d(Y_\alpha)$ to be the composition
%\begin{equation}\label{eq:phidef}
%\xymatrixcolsep{3pc}\xymatrix{K_0(A_\alpha) \ar[r]^-{m_\alpha)_*} & K_0(C^*(\tilde{\G}_\alpha)) \ar[r]^-{(t_\alpha)_*} & K^d(Y_\alpha)},
%\end{equation}
%where $m_\alpha$ and $t_\alpha$ are as in Lemma \ref{lem:morita} and \eqref{eq t} respectively. Define $\Phi_\beta$ analogously.

\begin{prop}\label{prop:presordunit}
Suppose the map $F\colon Y_\alpha\to Y_\beta$ is basic and induces an isomorphism on $K$-theory. Then the isomorphism $\Phi_\alpha^{-1}\circ F^*\circ \Phi_\beta\colon K_0(A_\beta)\to K_0(A_\alpha)$ preserves the order unit. In particular, this holds if $F$ is a basic homotopy equivalence.
\end{prop}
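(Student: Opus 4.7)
My plan is to compute $\Phi([1_A])$ explicitly via Proposition~\ref{prop:transfer} and then apply the basic hypothesis directly. Since $A_\alpha$ is unital with unit coming from $1_X$, the order unit satisfies $[1_{A_\alpha}] = i_*([1_X])$, where $i\colon C(X)\hookrightarrow A_\alpha$ is the unit-space inclusion of~\eqref{eq i}. Unwinding the definition of $\Phi_\alpha$ in~\eqref{eq:phidef} gives $\Phi_\alpha([1_{A_\alpha}]) = t_\alpha^{-1}\bigl((i^*m_\alpha)([1_X])\bigr)$, and Proposition~\ref{prop:transfer} rewrites this as $\iota_{\alpha*}(B([1_X])) \in K^d(Y_\alpha)$. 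The same computation gives $\Phi_\beta([1_{A_\beta}]) = \iota_{\beta*}(B([1_X]))$.

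The crucial point is that the class $B([1_X]) \in K^d(Y^0)$ is built only from $[1_X] \in K^0(X)$ and the Bott element on $\R^d$; it lives on the basic block $Y^0 = X\times(0,1)^d$ and makes no reference to either $\Z^d$-action. Hence it is literally the \emph{same} element of $K^d(Y^0)$ regardless of whether $Y^0$ is subsequently embedded into $Y_\alpha$ or $Y_\beta$. Applying the basic hypothesis on $F$ (Definition~\ref{def:basic}) to this common element yields
\[
F^*\bigl(\Phi_\beta([1_{A_\beta}])\bigr) = F^*\bigl(\iota_{\beta*}(B([1_X]))\bigr) = \iota_{\alpha*}(B([1_X])) = \Phi_\alpha([1_{A_\alpha}]),
\]
which is equivalent to the preservation of the order unit by $\Phi_\alpha^{-1}\circ F^*\circ \Phi_\beta$.

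For the final ``in particular'' clause, I would separately argue that every homotopy equivalence $F\colon Y_\alpha\to Y_\beta$ is basic in the $K$-theoretic sense of Definition~\ref{def:basic}. My plan is to characterize $\iota_{\alpha*}(B([1_X]))\in K^d(Y_\alpha)$ intrinsically as a homotopy invariant of $Y_\alpha$---most naturally, as the image of $[1_{Y_\alpha}]\in K^0(Y_\alpha)$ under a Thom-type isomorphism $K^0(Y_\alpha)\cong K^d(Y_\alpha)$ associated to the foliation bundle $FY_\alpha$, which is trivial (since the flow~\eqref{eq Rd action} gives $d$ linearly independent leafwise sections) and hence canonically $K$-oriented. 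If the same identification holds on the $\beta$ side, then any orientation-preserving homotopy equivalence must carry $[1_{Y_\beta}]$ to $[1_{Y_\alpha}]$ and therefore match the corresponding classes via $F^*$, so the $K$-theoretic basic diagram commutes on the element $B([1_X])$ that we actually need. I expect this identification to be the main obstacle: it requires fitting the transfer map of Proposition~\ref{prop:transfer} together with the Thom construction on the foliated space into a commutative diagram, and this is where care is needed regarding orientations and the interplay between the Bott class on $\R^d$ and the leafwise structure of~$Y$.
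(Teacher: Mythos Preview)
Your main argument is correct and is essentially the paper's proof written out in words: the paper encodes exactly the same computation in a single commutative diagram built from $i_{\alpha*},i_{\beta*},B,\iota_{\alpha*},\iota_{\beta*},m_\alpha,m_\beta,t_\alpha^{-1},t_\beta^{-1}$ and the basic triangle, and then says ``a straightforward diagram chase'' gives the conclusion. Your use of Proposition~\ref{prop:transfer} to identify $\Phi([1_A])$ with $\iota_*(B([1_X]))$ is precisely that chase made explicit.

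Where you go astray is in your reading of the ``in particular'' clause. The paper's proof ends immediately after the diagram chase; it offers no separate argument that an arbitrary homotopy equivalence is basic, and indeed the paper never asserts this. Throughout the rest of the paper (e.g.\ Theorem~\ref{thm:lift}, Proposition~\ref{prop:either}) the basic property is carried as an explicit hypothesis or established only under additional structure (orientation-preserving foliated or fibre-bundle isomorphisms). The intended meaning of ``in particular, this holds if $F$ is a homotopy equivalence'' is simply that a homotopy equivalence automatically satisfies the second hypothesis (it induces an isomorphism on $K$-theory), so that for a \emph{basic} homotopy equivalence the conclusion follows. Your proposed programme---identifying $\iota_*(B([1_X]))$ with the Thom image of $[1_Y]$ and arguing that any homotopy equivalence matches these---is therefore not needed here, and you correctly flag that it would require nontrivial further work on orientations and the leafwise Thom construction; the paper does not undertake that work.
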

\begin{proof}
First note that $[1_{A_\beta}]=i_{\alpha*}[1_{C(X)}]$ and $[1_{A_\alpha}]=i_{\beta*}[1_{C(X)}]$, where $i_{\alpha*}$ and $i_{\beta*}$ are the maps on $K$-theory induced by unit space inclusions \eqref{eq i}. These maps fit into a diagram:
\[
\begin{tikzcd}
K_0(C^*(\tilde{\G}_\alpha)) \arrow[r, "t_\alpha^{-1}"]                      & K^d(Y_\alpha)                     &              \\
K_0(A_\alpha) \arrow[u, "m_\alpha"]                      & K^d(Y^0) \arrow[u, "\iota_\alpha*"] \arrow[r, "\iota_\beta*"] & K^d(Y_\beta) \arrow[lu, "F^*"'] \\
K_0(C(X)) \arrow[r, "i_\beta*"] \arrow[u, "i_\alpha*"] \arrow[ru, "B"] & K_0(A_\beta) \arrow[r, "m_\beta"]           & K_0(C^*(\tilde{\G}_\beta)) \arrow[u, "t_\beta^{-1}"],
\end{tikzcd}
\]
(recall that $B$ is induced by the Bott element, see Eq. \eqref{eq B}).
Using Proposition \ref{prop:transfer} and the assumption that $F$ is basic, a straightforward diagram chase shows that $\Phi_\alpha^{-1}\circ F^*\circ \Phi_\beta$ maps $[1_{A_\beta}]$ to $[1_{A_\alpha}]$. 
\end{proof}

The next proposition deals with the two remaining cases.

\begin{prop}\label{prop:either}
	Suppose $F$ is an orientation-preserving isomorphism of either the fibre bundle structure or the foliated space structure. Then $F$ is basic.
\end{prop}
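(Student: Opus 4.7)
The plan is to reduce basicness to a simpler verification on the image of the Bott map. Since $B\colon K_0(C(X))\to K^d(Y^0)$ is a KK-equivalence, $F$ is basic if and only if $F^*\circ \iota_{\beta*}\circ B = \iota_{\alpha*}\circ B$ as maps $K_0(C(X))\to K^d(Y_\alpha)$. By Proposition \ref{prop:transfer}, each side coincides with $t^{-1}\circ m\circ i_*$, and by Lemma \ref{lem:trlem} this is concretely represented by a projection in $C^*(\tilde{\G})$ supported in an arbitrarily small tubular neighborhood of the canonical transversal $N=[X\times\{0\}]$, twisted by the data $\xi\in K_0(C(X))$. Thus the task becomes: show that the transfer class associated to $N_\beta\subset Y_\beta$ pulls back under $F$ to the transfer class associated to $N_\alpha\subset Y_\alpha$.

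In the fibre bundle case, $F$ covers an orientation-preserving base homeomorphism $\bar F\colon \T^d\to \T^d$, so $F^{-1}(N_\beta)$ equals the fibre of $\pi_\alpha$ over $\bar F^{-1}(0)\in\T^d$. Using a path in $\T^d$ joining $0$ to $\bar F^{-1}(0)$, I would produce a continuous one-parameter family of fibres with associated tubular neighborhoods, yielding a homotopy of projections in $C^*(\tilde{\G}_\alpha)$ that witnesses the equality of transversal classes. The effect of $F$ on the twisting parameter $\xi$ is mediated by the fibre map, which (by the bundle-isomorphism condition) intertwines the $\Z^d$-actions on $X$; since $i_{\alpha*}$ factors through the $\Z^d$-coinvariants, this twisting is absorbed and one obtains the desired equality at the level of $K^d(Y_\alpha)$.

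The foliated case is more subtle. Here $F$ sends leaves to leaves, so $F^{-1}(N_\beta)$ is still a transversal to the foliation of $Y_\alpha$. In a local foliation chart where $F$ takes the form $(x,\mathbf{r})\mapsto (\Psi^1(x),\Psi^2(x,\mathbf{r}))$, the image of the standard local transversal $\{\mathbf{r}=0\}$ is the graph of $x\mapsto\Psi^2(x,0)$, which is locally joined to the standard transversal via the straight-line leaf deformation $t\mapsto(\Psi^1(x),t\Psi^2(x,0))$. The hard part, and the main obstacle, will be assembling these local leafwise deformations into a coherent global isotopy through transversals: this requires patching on foliation-chart overlaps (using that the transition functions respect the transversal–leaf factorization), the contractibility of each leaf (diffeomorphic to $\R^d$), and the orientation-preserving hypothesis for a consistent choice of sign. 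Once the global isotopy is in place, the corresponding projections become homotopic in $C^*(\tilde{\G}_\alpha)$, giving equality of the transfer classes and hence basicness.
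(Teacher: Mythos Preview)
Your approach differs substantially from the paper's and has genuine gaps. The paper never leaves $K^d(Y)$: since the Bott representative $b(x,\mathbf{r})=b(\mathbf{r})$ ignores the $X$-coordinate, one computes $F^*b_\beta$ directly. In the fibre-bundle case the base component $F^2$ is independent of $x$, so $F^*b_\beta(x,\mathbf{r})=b(F^2(\mathbf{r}))$ and orientation-preservation gives $[F^*b_\beta]=[b_\alpha]$ at once. In the foliated case $F^*b_\beta(x,\mathbf{r})=b(F^2(x,\mathbf{r}))$; for each fixed $x$ the map $F^2(x,\cdot)$ is orientation-preserving, hence $b\circ F^2(x,\cdot)$ is homotopic to $b$, and these pointwise homotopies assemble continuously over $X$.

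Your route instead transports the problem into $K_0(C^*(\tilde{\G}_\alpha))$ and compares transversal projections. The step ``the transfer class for $N_\beta$ pulls back under $F$ to a transversal class in $C^*(\tilde{\G}_\alpha)$'' tacitly assumes that $t_\alpha\circ F^*\circ t_\beta^{-1}$ agrees with a map of foliation $C^*$-algebras induced by $F$. In the fibre-bundle case this is unavailable: a bundle isomorphism over $\T^d$ need not preserve leaves, so $F$ induces no map of holonomy groupoids and there is no functorial passage from $F^*$ on $K^d(Y)$ to $K_0(C^*(\tilde{\G}))$ intertwining the Connes--Thom maps. Your further claim that the fibre homeomorphism ``intertwines the $\Z^d$-actions on $X$'' is false in general --- the fibrewise map varies with the basepoint and carries no equivariance. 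In the foliated case your outline is in principle salvageable (a foliated isomorphism does induce a groupoid isomorphism, and all complete transversals give the same $K$-class), but you would still owe the naturality of $t$ under foliated isomorphisms, and the global isotopy you sketch is left unfinished. The paper's direct argument avoids all of this machinery by exploiting that the Bott element simply does not see the $X$-factor.
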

\begin{proof}
	We need to show that $F^*(b_\beta)=(b_\alpha)$, where $b_\alpha=\iota_{\alpha*}(b)$ and $b=b_0$ is the Bott element as in Definition \ref{def:basic}. Note that $b(x,\mathbf{r})=b(\mathbf{r})$, i.e. $b$ does not depend on $X$. We may suppose that the support of
	$b$ is contained in $F(Y^0_\alpha)\cap Y^0_\beta$, since $Y^0_\alpha$ and $Y^0_\beta$ are copies of the
	same space, and we may view $b$ as a formal difference $[b+e_{11}] -[e_{11}]$, where $b+e_{11}$ is a function of the Alexandroff
	compactification $(0,1)^{d \sim}$, and $e_{11}$ is the standard rank-one projection (as in the proof of Lemma \ref{lem:twistunit}). 
	Then, in the case of fibre bundles, $F^*(b_\beta)(x,\mathbf{r})= b_\beta(F(x,\mathbf{r}))=b(F^2(\mathbf{r}))$ since
	$b_\beta$ does not depend on the fibre. Since orientation is preserved, we have $F^*(b_\beta)(x,\mathbf{r})= b_\beta$. 
	
	In the case of
	foliated spaces, we have $F^*(b_\beta)(x,\mathbf{r})=b(F^2(x,\mathbf{r}))$. For any fixed $y\in X$, let us write $b_y=(b+e_{11})\circ F^2(y,\cdot)
	\in C((0,1)^{d \sim})$. We have $[b_y] -[1]=b$ in $K^0((0,1)^d)$ because $F^2(y,\cdot)$ is orientation-preserving. For any
	other $x\in X$, there is a homotopy $h_x$ with $h_x(0)=b_x$ and $h_x(1)= b_y$. When $b_x$ is sufficiently close to $b_y$,  $h_x$
	depends continuously on $b_x$ (cf. \cite[Lemma 2.2.4]{rordam:Kth}). Thus the collection of $h_x$ together yields a homotopy between $(b+e_{11})\circ F^2$ and $1_X\otimes
	b_y$. This shows that $F^*(b_\beta)=(b_\alpha)$, as claimed.
\end{proof}

%\begin{rema}
%Although it may not be apparent from the proof, the previous proposition relies on the tangential smoothness of $F$ (in the case of foliated spaces). This is because tangentially smooth maps are automatically continuous in the leaf topology. Nevertheless, it is possible to prove the statement when $F$ is only a leaf--preserving homeomorphism, by reducing it to the smooth case by means of a density argument. This can be done as in \cite[Theorem 2.15]{mooscho:globanalysis}.
%\end{rema}

Before closing this section, we consider the setting where we are given an isomorphism of the associated $C^*$-bundles,
$F\colon E_\beta \to E_\alpha$. The main tool for handling this -- namely an induction procedure -- has already been used in the proof of Lemma
\ref{lem:twistunit} when we related the $K$-theory of $A_\alpha$ to the $K$-theory of the algebra of sections of $E_\alpha$.

\begin{prop}
	Suppose given an isomorphism of bundles $F\colon E_\beta \to E_\alpha$. Then we have a $K$-theory isomorphism
	$K_*(A_\alpha)\cong K_*(A_\beta)$ preserving the order unit.
\end{prop}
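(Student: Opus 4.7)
My plan is to mimic the strategy in the proof of Lemma \ref{lem:twistunit}, which already relates $K_*(A_\alpha)$ to the $K$-theory of the section algebra $\Gamma(E_\alpha\otimes\cK)$ via the Packer-Raeburn stabilisation trick followed by Connes' Thom isomorphism. The first step is to invoke that chain of isomorphisms for both actions to obtain
$$K_0(A_\alpha) \xrightarrow{\cong} K_d(\Gamma(E_\alpha \otimes \cK)), \qquad K_0(A_\beta) \xrightarrow{\cong} K_d(\Gamma(E_\beta \otimes \cK)),$$
and to extract from that proof the fact that the order unit $[1_{A_\alpha}]$ is represented on the right-hand side by a class of the form $b\otimes 1_{C(X)}\otimes[p]$, where $b$ is the Bott element supported in a trivialising open $U\subseteq\T^d$ and $p$ is a minimal projection in $\cK$.

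The next step is to convert the bundle isomorphism $F$ into an isomorphism of section algebras. Since $F$ covers the identity of the base $\T^d$, tensoring with the identity on $\cK$ produces an isomorphism of $C^*$-bundles $E_\beta\otimes\cK\cong E_\alpha\otimes\cK$, and hence a unital $*$-isomorphism $F^*\colon\Gamma(E_\alpha\otimes\cK)\to\Gamma(E_\beta\otimes\cK)$ of the corresponding section algebras. Composing $F^*_{*}$ with the Packer-Raeburn and Connes-Thom identifications immediately gives the desired isomorphism $K_*(A_\alpha)\cong K_*(A_\beta)$.

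The only point that needs genuine verification is that this composition sends $[1_{A_\alpha}]$ to $[1_{A_\beta}]$. To check it, I would fix a common trivialising open $U\subseteq\T^d$ for both bundles (by refining the local trivialisations of $E_\alpha$ and $E_\beta$) and describe $F|_U$ in these trivialisations as a continuous family of fibrewise $*$-automorphisms $F_y\colon C(X)\to C(X)$. Each $F_y$ is unital and $F$ leaves the ``Bott direction'' in $U$ untouched because it covers the identity of $\T^d$, so one reads off
$$F^*\bigl(b\otimes 1_{C(X)}\otimes[p]\bigr) = b\otimes F_\cdot(1_{C(X)})\otimes[p]= b\otimes 1_{C(X)}\otimes[p],$$
which is precisely the representative of $[1_{A_\beta}]$ on the $\beta$-side.

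I expect the only real content to be the identification of the order unit with the local Bott-times-unit expression $b\otimes 1_{C(X)}\otimes[p]$ after passage through the Packer-Raeburn and Connes-Thom steps; this has already been done inside the proof of Lemma \ref{lem:twistunit}, so I would cite rather than redo it. Everything else is formal: a unital $*$-isomorphism of section algebras automatically preserves the class of the unit section in $K$-theory, and the Bott factor is unaffected because $F$ is fibrewise over $\T^d$. Compared with the proof of Lemma \ref{lem:twistunit}, this case is in fact slightly easier, since the automorphism relating the two bundles is given at the outset rather than produced by interpolating a twist, so no appeal to inner-ness of $\Aut(\cK)$ is needed.
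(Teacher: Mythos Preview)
Your proposal is correct and follows essentially the same route as the paper: identify $K_*(A_\alpha)$ with $K_{*+d}(\Gamma(E_\alpha))$ via the induction/stabilisation/Connes--Thom chain, transport along the section-algebra isomorphism induced by $F$, and track the order unit as a locally supported Bott-times-unit class that is fixed because $F$ acts by unital fibrewise automorphisms over a trivialising open set. The only cosmetic differences are that the paper works directly with $\Gamma(E_\alpha)$ (no extra $\cK$ factor is needed in the untwisted case) and allows $F$ to cover a nontrivial base map by taking $U=(0,1)^d\cap F^2((0,1)^d)$, whereas you assume $F$ covers the identity of $\T^d$.
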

\begin{proof}
By stabilizing $A_\alpha$, we are able to write the isomorphism
	\begin{equation}\label{eq:find}
\cK(L^2(\R^d))\otimes C(X)\rtimes_{1\otimes \alpha} \Z^d \cong (\R^d\ltimes C_0(\R^d)) \otimes C(X) \rtimes \Z^d,
\end{equation}
	where on the right-hand side we may assume that $\Z^d$ acts by translation on $C_0(\R^d)$, since the action of $\Z^d$ on $\cK(L^2(\R^d))$ is trivial (the translation action of $\Z^d$ on $C_0(\R^d)$ induces an action $\Z^d\to \mathrm{Aut}(\cK(L^2(\R^d)))$ with trivial Mackey obstruction \cite[Proposition D~27]{danawil:crossprod}).
	
	Using Connes' Thom isomorphism and the induction functor, we can rewrite the $K$-group of the $C^*$-algebra on the right of \eqref{eq:find} as $K_{*+d}(\Ind_{\Z^d}^{\R^d}(C(X)))$, where $\Ind_{\Z^d}^{\R^d}$ is as in Eq.~\eqref{eq:tctw1}. By \cite[Lemma 4.3]{parae:twisttwo}, we have
\[
	K_{*+d}(\Ind_{\Z^d}^{\R^d}(C(X)))\cong K_{*+d}(\Gamma(E_\alpha)),
\]
where on the right-hand side we have the algebra of sections of the $C^*$-bundle $E_\alpha$.

The map $F$ induces an isomorphism $\Gamma(E_\alpha)\cong \Gamma(E_\beta)$, and hence a $K$-theoretic isomorphism
$K_*(C(X)\rtimes_\alpha \Z^d) \cong K_*(C(X)\rtimes_\beta \Z^d)$. The order unit corresponds to the element $b\otimes 1_X\in
K_d(\Gamma(E_\alpha))$, where $b$ is the Bott element in $K_d(C_0(U))$, and $U\subseteq \T^d$ is a trivializing open subset homeomorphic to $\R^d$. Viewing $\T^d$ as a quotient of $[0,1]^d$, we may take $U$ to be the intersection $(0,1)^d\cap
F^2((0,1)^d)$. Then the isomorphism $F$ is implemented by a continuous map $L\colon U\to \mathrm{Aut}(C(X))$, whence the element $b\otimes 1_X$ is preserved, as is the order unit of $K_0$.
\end{proof}

\section{Index theory and trace pairings}\label{sec:index}

In this section we lay out the fundamental index-theoretic constructions that allow us to control the trace pairings on
$A_\alpha=C^*(\G_\alpha)$ and its twisted version. 

The use of index theory to compute the trace values in our setting is quite natural, since our main hypothesis (i.e. identification of the homotopy quotients) directly involves the $K$-theoretic isomorphism given by Connes' Thom map, which is represented by the class of the Dirac operator. Indeed, working in a similar context, a number of other authors \cite{benmat:magzero,benoy:gap,putkam:gap} have successfully applied the index theorem for foliated spaces, within the framework of \emph{tangential} index theory. We will adopt a similar setup, which we now recall for the reader's convenience.

%The definition of tangential (differential) operator involves a parametrisation property and a $\tilde{\G}$-invariance
%property. 
If $Y$ is a foliated space with $d$-dimensional leaves, then there is an induced foliated space
structure on the holonomy groupoid $\tilde{\G}$ whose leaves have dimension $2d$. In this sense, the source map can be viewed as
a ``submersion'', meaning that for each $y\in Y$ there exists an open neighbourhood $U\subset Y$ and open subsets
$\tilde{U}\subseteq\tilde{\G}$ and $V\subseteq \R^d$ such that $s|_{\tilde{U}}\colon \tilde{U}\to U$ is identified with the map $U\times V\to U$ via suitable coordinate charts $\psi\colon\tilde{U}\to U\times V$. We will use the notation $C^{0,\infty}$ to denote the space of tangentially smooth functions.

A tangential operator is a linear map $P\colon C^{0,\infty}_c(\tilde{\G}) \to C^{0,\infty}(\tilde{\G})$ such that there exists a family of
linear functions $\{P_y\colon C^{\infty}_c(\tilde{\G}_y)\to C^{\infty}(\tilde{\G}_y)\}_{y\in Y}$ satisfying
\begin{align*}
	(Pf)(\gamma) & =P_{s(\gamma)}(f|_{\tilde{\G}_{s(\gamma)}})(\gamma),\\
	[P_{s(\gamma_1)}f(\cdot\, \gamma_1^{-1})](\gamma) & =(P_{r(\gamma_1)}f|_{\tilde{\G}_{r(\gamma_1)}})(\gamma\gamma_1^{-1}).
\end{align*}
The second condition is the key invariance property of tangential operators. 

We are interested in tangential differential operators
whose family $\{P_y\}_{y\in Y}$ is \emph{indexed by} the source map $s\colon \tilde{\G}\to Y$, meaning that for any local
trivialisation $(U,\tilde{U},\psi)$ as above, the operator $P$ takes the form 
\begin{align*}
	P\colon C_c^{0,\infty}(U\times V)&\to C^{0,\infty}(U\times V)\\
	(Pf)(y,v)&=[P_yf(y,\cdot)](v),
\end{align*}
where $P_y$ is a classical (pseudo-)differential operator acting on $V$. The invariance property guarantees that $P$ admits a
distributional kernel $k_P$ such that the kernel associated to $P_y$ is of the form
$k_y(\gamma,\gamma^{\prime})=k_P(\gamma\gamma^{\prime -1})$ \cite[Section 5]{nwx:pseudo}. If $k_P$ belongs to $C_c^{0,\infty}(G)$, we say that $P$ is
\emph{compactly smoothing}. 

The discussion so far can be readily extended to operators acting on vector-valued functions. To further
extend to the case of sections of vector bundles, we make the following preparations. First of all, we note that $K$-theory
defined via tangentially smooth bundles coincides with the usual $K$-theory \cite[Proposition 2.16]{mooscho:globanalysis}. We lift bundles from $Y$ to
$\tilde{\G}$ by pulling back along the range map. If $E$ is a bundle over $Y$, we can find an orthogonal projection in
$M_N(C^{0,\infty}(Y))$ with image $E$ and kernel $E^\perp$. Composing with the range map, we get an element $e$ in the multiplier
algebra of $C^*(\tilde{\G})$, and a tangential differential operator acting on $E$ is defined to be of the form $P=eP^\prime e$
where $P^\prime$ is an operator on the trivial bundle.

Defining $(r^*Pf)(y)=[P_{s(\gamma)}(f\circ r)](\gamma)$ for any $\gamma$ in $\tilde{\G}^y$ gives a well-defined map (by invariance)
into operators on $Y$, preserving compactly smoothing operators, degree of pseudodifferential operators, and composition. The
principal symbol $\sigma(P)$ of $P$ is defined as the principal symbol of $r^*P$, yielding a map $\sigma(P)\colon S^*F\to
\pi^*(\mathrm{Hom}(E,E^\prime))$. Here, $F$ is the foliated tangent bundle, $S^*F$ is the cosphere bundle, and $\pi\colon
S^*F\to Y$ is the canonical projection. An operator is \emph{tangentially elliptic} if its principal symbol takes values in
$\mathrm{Iso}(E,E^\prime)$ rather than $\mathrm{Hom}(E,E^\prime)$.

\begin{rema}\label{rem:diffbundle}
	We are going to tacitly assume the existence of an isomorphism $E\cong E^\prime$ (since otherwise we wouldn't have an algebra structure for various terms of the exact sequence below). This is only a superficial obstacle, and it can be bypassed as explained on \cite[page 186]{mooscho:globanalysis}. The framework is much simplified in the concrete setting of the mapping torus relevant for this paper, so we will neglect this subtlety in the sequel.
\end{rema}
Degree-zero operators extend on a field of Hilbert spaces as $P_y\colon L^2(\tilde{\G}_y,\lambda_y,E|_{\tilde{\G}_y})\to
L^2(\tilde{\G}_y,\lambda_y,E^\prime|_{\tilde{\G}_y})$, allowing the definition of a formal adjoint and a norm. As a result, the
degree zero operators form a normed $*$-algebra which we can complete to a $C^*$-algebra, denoted by $\Psi^0(\tilde{\G},E,E^\prime)$. Under
this completion, the tangentially smooth sections $\Gamma(\tilde{\G},\mathrm{Hom}(E,E^\prime))$ form a $C^*$-ideal
$C^*_r(\tilde{\G},\mathrm{Hom}(E,E^\prime))$, which is stably isomorphic to the groupoid $C^*$-algebra $C^*(\tilde{\G})$. In
all, we have a short exact sequence
\[
	0 \longrightarrow C^*_r(\tilde{\G},\mathrm{Hom}(E,E^\prime)) \longrightarrow \Psi^0(\tilde{\G},E,E^\prime)
	\overset{\sigma}{\longrightarrow}
\Gamma(S^*F,\pi^*(\mathrm{Hom}(E,E^\prime))) \longrightarrow 0.
\]
In a standard way, we can use the associated $6$-term exact sequence in $K$-theory to define the index class $\Ind(P)$ of an elliptic operator. More precisely, if $\partial$ denotes the boundary map of this sequence, then
\begin{equation}
\label{eq index K}
\Ind(P)\coloneqq\partial([\sigma(P)])\in K_0(C^*(\tilde{\G})),
\end{equation}
where $[\sigma(P)]\in K_1(\Gamma(S^*F,\pi^*(\mathrm{Hom}(E,E^\prime))))$. 

For a differential operator $D$ of non-zero degree $m$ (which is the case for the leafwise
Dirac operator underlying the definition of $t\in\KKK_d(C(Y),C^*(\tilde{\G}))$), we take its degree-zero transform
\begin{equation}
\label{eq transform}	
	P=(1+\Delta)^{-m/2}D,
\end{equation}
where $\Delta$ is the standard (tangential) Hodge-Laplace operator \cite[p. 172]{mooscho:globanalysis}. This procedure preserves the principal symbol up to homotopy. In order to obtain a \emph{numerical} index from the tangential operator $P$, we will use (signed) \emph{invariant transverse} measures on $Y$. In our setting, such measures correspond precisely to
(signed) $\Z^d$-invariant measures on $X$ (see \cite[pp. 105-106]{mooscho:globanalysis}); see also Remark \ref{rem:coibvrema} below. We refer the reader to \cite[Chapter 4]{mooscho:globanalysis} for a more general discussion of transverse measures. Let $M(Y)$ denote the space of such measures. If $\mu\in M(Y)$ is a probability measure, then as explained in Section \ref{sec:prelims}, we have associated traces $\tilde{\tau}_\mu$ and $\tau_\mu$ on $C^*(\tilde{\G})$ and $A=C^*(\G)$ respectively.

\begin{defi}
	The \emph{analytic $\mu$-index} of $P$ is
	\[\Ind_\mu(P)\coloneqq\tilde{\tau}_\mu(\Ind(P)),\]
where $\Ind(P)$ is as in \eqref{eq index K}.
\end{defi}
Since $A$ possesses the strict comparison property, the positive cone of its $K$-theory is determined by the values of tracial states,
meaning that $x\in K_0(A)$ belongs to $K_0(A)^+$ if and only if $\tau(x)>0$ for any tracial state $\tau$ on $A$. In view of Lemma
\ref{lem:tracemeas}, any such trace $\tau$ is of the form $\tau=\tau_\mu$ for an invariant measure $\mu$. By the Morita
equivalence $\G\sim \tilde{\G}$, in particular Lemma \ref{lem:traces}, we can then replace the positive cone (as an invariant)
with the pairing $R\colon M(Y)\times K_0(C^*(\tilde{\G}))\to \R$, which takes an invariant transverse measure $\mu$ and a $K$-theory element $x$ and produces a real number $\tilde{\tau}_\mu(x)$. By virtue of the mapping torus
structure of $Y$, there is a one-to-one correspondence between $M(Y)$ and the set of $\Z^d$-invariant measures on $X$.

We will compute the invariant $R$ in two stages.
\subsection{Step 1: Identification with pairing via bundles}
We are going to use the leafwise Dirac operator whose $\KKK$-class we have defined in \eqref{eq DX}. See also \cite[Section 1.3]{benoy:gap} (referred to as ``longitudinal Dirac operator'').

First, we identify $R$ with another pairing $R^\prime$, defined as follows. 
Given a tangentially smooth
complex vector bundle $E$ on $Y$, we twist the leafwise Dirac operator $D$ by $E$ and apply the transform \eqref{eq transform} to obtain a tangentially elliptic operator $P_E\in \Psi^0(\tilde{\G},S^{\pm}\otimes E,S^{\mp}\otimes E)$.

Suppose first that $d$ is even. Define
\begin{align*}
	R^\prime\colon M(Y) \times K^0(Y) &\to \R\\
	(\mu,[E]-[F])&\mapsto \Ind_\mu(P_E)-\Ind_\mu(P_F).
\end{align*}

When $d$ is odd we can give the definition by considering
the space $Y\times \R$ foliated by $\mathrm{Leaf}(Y)\times \R$. We then have $K^d(Y)\cong K^{d-1}(Y\times \R)$, given by $[u]\mapsto [E_u]-[E^\prime_u]$, where $u$ is a
unitary and $E_u, E^\prime_u$ are vector bundles. The leafwise Dirac operator $\tilde{D} = D \boxtimes
\slashed{\partial}_\R$ is well-defined and can be twisted by $E_u$ and $E^\prime_u$. Then $R^\prime(\mu,[u])$ is defined as
$R^\prime(\mu,[E_u])=\Ind_\mu(P_{E_u})-\Ind_\mu(P_{E^\prime_u})$.
\begin{prop}\label{prop:ctdirbc}
	Up to the identification given by Connes' Thom isomorphism (indicated by $t_*$ below), the invariant $R$ equals $R^\prime$. In other words, the following
	diagram commutes:
\begin{equation*}
\begin{tikzcd}[remember picture]
	M(Y) \arrow[-,double line with arrow={-,-}]{d} & K^d(Y) \arrow[d, "t_*"] \arrow[r, "R^\prime"] & \R \arrow[-,double line with arrow={-,-}]{d} \\
	M(Y) & K_0(C^*(\tilde{\G})) \arrow[r, "R"] & \,\,\R.
\end{tikzcd}
\begin{tikzpicture}[overlay,remember picture]
	\path (\tikzcdmatrixname-1-1) to node[midway,sloped]{$\times$}
	(\tikzcdmatrixname-1-2);
	\path (\tikzcdmatrixname-2-1) to node[midway,sloped]{$\quad\times$}
	(\tikzcdmatrixname-2-2);
\end{tikzpicture}
\end{equation*}
\end{prop}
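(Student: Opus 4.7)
The strategy is to reduce the equality $R(\mu, t(x)) = R'(\mu, x)$ to the compatibility between the Kasparov product and the index of the twisted leafwise Dirac operator. Both pairings are bilinear in their arguments, so it suffices to verify the claim on a class $x = [E]$ represented by a tangentially smooth complex vector bundle on $Y$, and then extend by linearity to formal differences.

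First I would identify the KK-class $t \in \KKK_d(C(Y), C^*(\tilde{\G}))$ with the class of the degree-zero transform of the leafwise Dirac operator on $Y$. This is built into the construction \eqref{eq t}: the element $\jmath(D_X)$ is the Kasparov descent of the equivariant Dirac class $D_X = D \hot 1_{C(X)}$ on $X \times \R^d$, and tensoring with the Morita element $m$ from Lemma \ref{lem:morita} transports this to $\KKK_d(C(Y), C^*(\tilde{\G}))$. Because the leaves of $Y$ are $\R^d$-orbits with tangent bundle trivialised by the flow \eqref{eq Rd action}, and because of the groupoid isomorphism \eqref{eq iso Gtilde Gbar}, the descent-then-Morita procedure produces (up to unitary equivalence) the Hilbert $C^*(\tilde{\G})$-module of leafwise $L^2$-sections of the spinor bundle $S^\pm$, equipped with the degree-zero transform $P = (1+\Delta)^{-1/2} D$ of the leafwise Dirac.

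Next, for a tangentially smooth bundle $E$ represented by a projection $e \in M_N(C^{0,\infty}(Y))$, the Kasparov product $[E] \hot_{C(Y)} t$ is represented by compressing the above module and operator by $e$, giving precisely the twisted operator $P_E \in \Psi^0(\tilde{\G}, S^\pm \otimes E, S^\mp \otimes E)$. By definition of the boundary map in the $6$-term sequence associated to the symbol exact sequence, this Kasparov class agrees with $\Ind(P_E) \in K_0(C^*(\tilde{\G}))$ from \eqref{eq index K}. Applying the trace and invoking the definition of the analytic $\mu$-index together with the trace compatibility from Lemma \ref{lem:traces} yields
\[
R(\mu, t([E])) = \tilde{\tau}_\mu(\Ind(P_E)) = \Ind_\mu(P_E) = R'(\mu, [E]).
\]
Linearity then gives the result for arbitrary classes $[E] - [F] \in K^d(Y)$.

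The main obstacle is the first step: rigorously matching the abstract Kasparov-theoretic description of $t$ with the concrete index class of the leafwise Dirac on $Y$, including verifying that the spinor modules, Clifford structures, and orientation conventions line up so that the degree-zero transform on the foliation side corresponds exactly to the image of $D_X$ under descent. This is essentially folklore in foliation index theory (compare the discussions in \cite{connes:ncg} and \cite{mooscho:globanalysis}), and can be carried out by checking compatibility on foliation charts of the form $U \times \R^d$ where $U$ is an open subset of the transversal $N$, since in such a chart both sides reduce to the standard $\Z^d$-equivariant Dirac operator on $\R^d$.
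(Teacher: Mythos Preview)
Your proposal is correct and follows essentially the same route as the paper: both reduce to showing $t([E]) = [E]\hot t = \Ind(P_E)$ in $K_0(C^*(\tilde{\G}))$, then apply $\tilde{\tau}_\mu$. The paper's proof is terser, simply asserting that this equality follows from a direct computation of the Kasparov product (using the description of $t$ from \cite{skandalis:fack}), and additionally offers an alternative justification you do not mention: $t$ is the Dirac morphism underlying the Baum--Connes assembly map for $(\R^d, C_0(Y))$, so the identification with the analytic index is the standard index-theoretic interpretation of assembly (with a pointer to \cite[Section 1.3]{benoy:gap}). Your invocation of Lemma~\ref{lem:traces} at the end is unnecessary, since $R$ is already defined via $\tilde{\tau}_\mu$, but this does no harm.
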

\begin{proof}
	By definition of the analytic $\mu$-index, it is enough to show that for any vector bundle $E$ over $Y$, we have
	$t_*[E]=[E]\hot t = \Ind(P_E)\in K_0(C^*(\tilde{\G}))$. This is carried out in detail in \cite[Section 1.3]{benoy:gap}. 
Alternatively, comparing Eq. \eqref{eq t} with the definition of the element $t$ given in \cite{skandalis:fack}, we can deduce that $t$ is the Dirac  morphism underlying the Dirac dual-Dirac approach to the Baum--Connes conjecture with coefficients for $(\R^d,C_0(Y))$ (see also the examples section in \cite{val:shi}). Then the claim follows from the standard index-theoretic interpretation of the assembly map. 
\end{proof}

\subsection{Step 2: Identification with pairing via cohomology}\label{subsec:steptwo}
The second step to compute $R$ is to identify $R^\prime$ with another pairing 
$$C\colon M(Y)\times K^d(Y)\to \R,$$
which we now define. This requires the use of \emph{tangential} de Rham cohomology, for which details can be found in \cite[Chapter 3]{mooscho:globanalysis}. Let us remind the reader of the main points. 

By means of a leafwise differential, one constructs an analogue of the de Rham complex for manifolds, along with associated cohomology
groups $H^*_\tau(Y,\R)$. As in the case of manifolds, there is a useful identification with sheaf cohomology, obtained by considering the sheaf $\R_\tau$ of germs of real-valued continuous functions on $Y$ that are locally constant in the tangential direction. Let $r\colon \R \to \R_\tau$ be the inclusion of the sheaf of real-valued functions that are locally constant. There is an induced map $r_*\colon H^*(Y,\R)\to H^*(Y,\R_\tau)\cong H^*_\tau(Y,\R)$, which is
in general neither injective nor surjective. The \emph{tangential} Chern character can then be defined by composing $r_*$ with the
ordinary Chern character (with values in Čech cohomology, which we could take as a model for $H^*(Y,\R)$). Alternatively, one can
proceed by using tangentially smooth bundles and a leafwise Chern--Weil construction. We will denote the tangential Chern character by
\begin{equation}
\label{eq Chern tau}
\Chern_\tau\coloneqq r_*\circ\, \Chern\colon K^d(Y)\to H^{[d]}_\tau(Y,\R),
\end{equation}
where $H^{[d]}_\tau\coloneqq\oplus_{i\equiv d \text{ mod $2$}}
H^i_\tau$. As in the case of the ordinary Chern character, $\Chern_\tau$ is a natural transformation of $\Z_2$-graded functors. We refer the reader to \cite[Chapter 5]{mooscho:globanalysis} for more details on tangential characteristic classes.

In order to define the pairing $C$, the first ingredient we shall need is the notion of tangential currents. More precisely, having equipped the tangential de Rham complex with an appropriate topology, a current of degree $k$ is by definition an element in the
continuous dual of the space of $k$-differential forms. Such currents form a chain complex whose homology theory we denote by
$H^\tau_*(Y,\R)$. For each $k$, we have an isomorphism $H^k_\tau(Y,\R)^*\cong H^\tau_k(Y,\R)$ \cite[Proposition
3.35]{mooscho:globanalysis}, a fact due essentially to the exactness of the $\mathrm{Hom}$-functor. 

\begin{rema}
By convention, when we view a cohomology group as a topological vector space, we will mean its associated largest Hausdorff quotient. As long as we keep in mind that exactness may fail, this convention does not pose a problem when pairing with homology, as the pairing is well-defined at the level of this quotient.
\end{rema}

Next, we need the \emph{Ruelle--Sullivan map}, which assigns to an invariant transverse measure $\mu$ a top-degree transverse current $C_\mu$. Given a differential form $\omega$ of degree $d$, we define the pairing $C_\mu(\omega)$ as follows. By fixing an orientation on the foliation bundle $FY$, we can think of $\omega$ as a tangentially smooth $1$-density, hence it defines a \emph{tangential measure}, i.e. a measurable section $\ell \mapsto \lambda_\omega^{(\ell)}$ from the leaf space $Y/\tilde{\G}$ to the space of Radon measures
supported in a leaf. 

Now given any tangential measure $\lambda$ and invariant transverse measure $\mu$, we can define a (signed) ``global'' measure on $Y$, written as
\begin{equation*}
	\tilde{\mu}(E)=\int_E \lambda\,d\mu
\end{equation*}
for any Borel subset $E\subseteq Y$. The value of $\tilde{\mu}(E)$ is computed as follows. Let $\ell(y)$ denote the leaf containing $y\in Y$. Fix a complete Borel transversal
$N\subseteq Y$ and a Borel function $f\colon Y\to N$ satisfying $\ell(y)=\ell(f(y))$. For each $x\in N$, define a measure $\rho_x$ on $f^{-1}(x)$ as the restriction of $\lambda^{(\ell(x))}$ to $f^{-1}(x)\subseteq \ell(x)$. If $\chi_E$ denotes the indicator function of
$E$, we set
\begin{equation*}
	\tilde{\mu}(E)=\int_E \lambda\,d\mu\coloneqq\int_N\!\int_{f^{-1}(x)} \chi_E(y)\,d\rho_x(y)d\mu(x).
\end{equation*}
One verifies that this definition does not depend on the choices of $N$ and $f$ \cite[p. 90]{mooscho:globanalysis}. We can now define the value $C_\mu(\omega)$ as the total mass of $\tilde{\mu}$,
\begin{equation*}
	C_\mu(\omega)\coloneqq\int_Y\lambda_\omega\,d\mu.
\end{equation*}

\begin{rema}\label{rem:coibvrema}
This integration procedure can be understood more concretely in the case we are concerned with, where $Y$ is a mapping torus. A tangential differential $k$-form on $Y$ is a $\Z^d$-equivariant continuous map $\omega\colon X\to \Omega_b^k(\R^d)$ into the space of $k$-forms on $\R^d$ with bounded derivatives of all orders. If $\omega$ is a top-degree form, then $C_\mu(\omega)$ is simply the integral
\[
\int_X\int_{(0,1)^d}\omega(x)\,d\mu(x).
\]

Even more concretely, let us consider the case of $1$-dimensional leaves. Let $T$ be a homeomorphism of $X$, and identify $Y$ with the quotient of the cylinder $X\times [0,1]$ by the relation $(x,1)\sim(T(x),0)$. Invariant transverse measures on $Y$ correspond to $T$-invariant measures on $X$. The generic tangential $1$-form is of the form $a(x,t)dt$, with $a(x,1)=a(T(x),0)$, and it is a cocycle, since it is of top-degree. Writing $a=(\partial b/\partial t)(x,t)$, we see that
\[
\int_0^1 a(x,t)\,dt= b(T(x),0)-b(x,0),
\]
hence any tangential $1$-coboundary takes the form $(b_0(T(x))-b_0(x))dt$. Thus we have identified $H^1_\tau(Y,\R)$ with the (topological) $T$-coinvariants of $C(X,\R)$ (cf. \cite[Theorem 2]{benoy:gap}). The dual space is then given by the $T$-invariant measures, as predicted by the Riesz representation theorem.
\end{rema}

The following theorem (whose proof can be found in \cite[Theorem 4.27]{mooscho:globanalysis}) expresses the main property of the
assignment $\mu\mapsto C_\mu$ we have just described.

\begin{theo}\label{thm:riesz}
	Let $Y$ be a compact oriented foliated space with leaf dimension $d$. The Ruelle--Sullivan map induces an isomorphism of vector spaces $M(Y)\cong H^\tau_d(Y,\R)$.
\end{theo}
\begin{rema}
\label{remark orientation}
As explained above, the definition of the Ruelle--Sullivan map involves a choice of orientation of the foliation bundle $FY$. This will play a more active role when we discuss positive cones of $K$-theory.
\end{rema}

We can now define the pairing $C\colon M(Y)\times K^d(Y)\to \R$ by 
\begin{equation}
\label{eq C}
	C(\mu,[E]) = C_\mu(\Chern_\tau([E])),
\end{equation}
where the top-dimensional component of the Chern character is considered. 

In order to identify $R^\prime$ with $C$, we will use the index theorem for measured foliated spaces, which states an equality between the analytic index and the topological index of a differential elliptic operator. As such, let us review the definition of the \emph{topological $\mu$-index} of
a tangentially elliptic differential operator.

We will work with Segal's picture of topological $K$-theory \cite{segal:equivkth}, which is based on elliptic complexes of
vector bundles. The main advantage of this approach is that it allows us to easily bypass the restriction $E\cong E^\prime$ mentioned in Remark \ref{rem:diffbundle}. The even $K$-theory group $K^0(F^*Y)$ of the foliation cotangent bundle is such that each of
its elements can be represented as a complex
\begin{equation*}
	0 \longrightarrow \pi^*(E)\overset{\alpha}{\longrightarrow} \pi^*(E^\prime) \longrightarrow 0,
\end{equation*}
where $E$ and $E^\prime$ are Hermitian vector bundles over $Y$ being pulled back to $F^*Y$. Choosing $\alpha = \sigma(P)$ for a
tangentially elliptic differential operator yields a valid $K$-theory element, which is denoted by $[\sigma(P)]\in K^0(F^*Y)$.

The map $r_*$ introduced earlier (above Eq. \eqref{eq Chern tau}) respects Thom classes. Thus we obtain, for a tangentially smooth complex vector bundle $E\to Y$, the
analogue of the ordinary Thom isomorphism in cohomology $\Phi_\tau\colon H^k_\tau(Y,\R)\to H^{k+n}_{\tau c}(E,\R)$ (with values
in compactly supported cohomology of degree $k+\mathrm{rk}(E)$). The inverse map $\Phi_\tau^{-1}=\pi_!$ is essentially given by
integrating along the fibres of $E$, and as usual we can adjust for the commutativity defect between $\Chern_\tau$ and the Thom
class by introducing a suitable Todd class $\mathrm{Td}_\tau(-)$, defined to be the image under $r_*$ of the ordinary Todd class.

\begin{defi}
\label{def top index}
	The topological $\mu$-index of $P$ is 
	\begin{equation*}
		\Ind_\mu^{\text{top}}(P)= C_\mu(\epsilon \, \pi_!\Chern_\tau([\sigma(P)])\mathrm{Td}_\tau(FY_\C)),
	\end{equation*}
	where $\epsilon=(-1)^{d(d+1)/2}$ is a sign determined by the relative orientation of the tangent bundle and the base space,
	and $FY_\C$ is the complexified foliated tangent bundle.
\end{defi}

We are now ready to state the index theorem for foliated spaces \cite[Theorem 8.1]{mooscho:globanalysis}.

\begin{theo}\label{thm:indthm}
	Let $Y$ be a compact foliated space with leaves of dimension $d$ and oriented foliation bundle $FY$. Let $P$ be the bounded
	transform of a tangentially elliptic (pseudo-)differential operator $D$ and $\mu$ be an invariant transverse measure. Then we
	have an equality of indices
	\begin{equation*}
		\Ind_\mu(P)=\Ind_\mu^{\text{top}}(D).
	\end{equation*}
\end{theo}

We can now prove the identification $R^\prime = C$ that we are after.
\begin{coro}\label{coro:rprimec}
	 We have $R^\prime = C \colon M(Y)\times K^d(Y) \to \R$.
\end{coro}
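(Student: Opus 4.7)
My plan is to apply the index theorem for foliated spaces (Theorem \ref{thm:indthm}) to the twisted leafwise Dirac operator and reduce the topological side to the tangential Chern character pairing $C$. A significant simplification occurs because the foliation bundle $FY$ is trivial in our setting.

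Fix $[E]\in K^d(Y)$ and let $P_E$ be the bounded transform of $D_E$. Theorem \ref{thm:indthm} together with Definition \ref{def top index} yields
\[
R^\prime(\mu,[E])=\Ind_\mu(P_E)=\Ind_\mu^{\mathrm{top}}(D_E)=C_\mu\bigl(\epsilon\cdot\pi_!\Chern_\tau([\sigma(D_E)])\cdot\mathrm{Td}_\tau(FY_\C)\bigr).
\]
By construction, $D_E$ is $D$ compressed by the projection onto $\pi^*E$, so its symbol factors as $[\sigma(D_E)]=[\sigma(D)]\cdot\pi^*[E]$ in $K^0(F^*Y)$. Multiplicativity of $\Chern_\tau=r_*\circ\Chern$ (Subsection \ref{subsec:steptwo}) together with the projection formula for $\pi_!$ then give
\[
\pi_!\Chern_\tau([\sigma(D_E)])=\Chern_\tau([E])\cdot\pi_!\Chern_\tau([\sigma(D)]).
\]

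The key simplification is that, for the mapping torus $Y=X\times_{\Z^d}\R^d$, the foliation bundle $FY$ is the trivial rank-$d$ bundle generated by the $\R^d$-flow, whence $\mathrm{Td}_\tau(FY_\C)=1$. Moreover, $[\sigma(D)]\in K^0(F^*Y)$ is the pullback of the Bott class on $\R^d$, so $\pi_!\Chern_\tau([\sigma(D)])=1\in H^0_\tau(Y,\R)$ by the classical Bott-periodicity computation that the Chern character of the Bott class on $\R^d$ integrates to $1$, combined with naturality of $r_*$ with respect to the Thom construction. Combining everything gives
\[
R^\prime(\mu,[E])=C_\mu\bigl(\epsilon\cdot\Chern_\tau([E])\bigr)=C(\mu,[E]),
\]
the sign $\epsilon=(-1)^{d(d+1)/2}$ being absorbed into the orientation convention on $FY$ used to define the Ruelle--Sullivan current (cf.\ Remark \ref{remark orientation}). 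Taking formal differences extends the identity from classes of the form $[E]$ to all of $K^d(Y)$.

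The only real obstacle is a careful verification that the identities $\mathrm{Td}_\tau(FY_\C)=1$ and $\pi_!\Chern_\tau([\sigma(D)])=1$ transfer correctly from their elementary pointwise (leafwise) counterparts to tangential cohomology in the sense of \cite{mooscho:globanalysis}. This amounts to the naturality of $r_*$ with respect to the Thom class, which has already been invoked in Subsection \ref{subsec:steptwo} and presents no new difficulty.
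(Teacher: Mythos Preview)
Your argument is essentially the paper's own: apply Theorem \ref{thm:indthm}, observe that the Todd class is trivial because $FY$ is flat (trivial), and reduce the topological side to $\Chern_\tau(E)$. The paper packages the symbol computation slightly differently, invoking directly the classical identity $\Chern(E)=\epsilon\,\pi_!\Chern([\sigma(D_E)])$ from Dirac index theory and transporting it via $r_*$, whereas you factor $[\sigma(D_E)]=[\sigma(D)]\cdot\pi^*[E]$ and then evaluate $\pi_!\Chern_\tau([\sigma(D)])$ separately; these are two presentations of the same computation. One omission: you implicitly assume $d$ even by working with bundle classes $[E]$, while the paper treats odd $d$ separately via the suspension $Y\times\R$ and the identification $\Chern_\tau([u])=\int_\R\Chern_\tau([E_u])\,d\lambda^1$; you should add a sentence covering that case.
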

\begin{proof}
	Assume first that $d$ is even. If $E$ is a vector bundle over $X$, then it is enough to show that the cohomology class associated
	to $D_E$ (appearing in the topological index formula) is equal to $\Chern_\tau(E)$. Since the leaves of $Y$ are
	flat in our case, the Todd class (as well as its square root) is trivial. Now the cohomological equality
	\begin{equation*}
		\Chern(E)= \epsilon\, \pi_!\Chern([\sigma(D_E)]),
	\end{equation*}
	which is well-known from the index theory of Dirac operators \cite[Theorem 13.10]{lawmic:spin}, has an obvious analogue in the tangential setting that follows from the properties of the map $r_*$. In the case that $d$ is odd, the pairing $R^\prime$ is defined by
	considering the obvious foliation on $Y\times \R$. One then checks that
	$\Chern_\tau([u])=\int_\R\Chern_\tau([E_u])\,d\lambda^1$, which is in fact one way to define the odd tangential Chern character (see for instance \cite{mooscho:globanalysis}).
\end{proof}
\subsection{Isomorphism of positive cones}
We are now ready to prove the main result of this section. We make the following conventions. Suppose $f,g\colon
Y_\alpha \to Y_\beta$ are continuous leaf-preserving maps of foliated spaces. A \emph{leafwise} homotopy between $f$ and $g$ is a
leaf-preserving continuous map $h\colon Y_\alpha \times \R \to Y_\beta $ with
\begin{equation*}
	h(x,t)= \begin{cases}
		f(x) & \text{for $t\leq 0$,}\\
		g(x) & \text{for $t\geq 1$,}
	\end{cases}
\end{equation*}
(here $\mathrm{Leaf}(Y_\alpha\times \R)=\mathrm{Leaf}(Y_\alpha)\times \R$).
Leafwise homotopy is clearly an equivalence relation, and we will use the standard notion of \emph{homotopy equivalence} for
continuous leaf preserving maps according to the notion of homotopy just described.

Let $C_\alpha$ (resp. $C_\beta$) denote the pairing $C$ applied to the space $Y_\alpha$ (resp. $Y_\beta$). Recall the $C^*$-algebras $A_\alpha=C^*(\G_\alpha)$ and $A_\beta=C^*(\G_\beta)$.
% and the $K$-theory map $\Phi\colon K_0(A)\to K^d(Y)$ in \eqref{eq:phidef}.
\begin{theo}\label{thm:trposcon}
	Suppose $F\colon Y_\alpha \to Y_\beta$ is an orientation-preserving leafwise homotopy equivalence of the mapping tori. Then
	we have a commutative diagram
\begin{equation*}
\begin{tikzcd}[remember picture]
	M(Y_\alpha) \arrow[d, "F_*"] & K^d(Y_\alpha)  \arrow[r, "C_\alpha"] & \R \arrow[-,double line with arrow={-,-}]{d}\\
	M(Y_\beta) & K^d(Y_\beta) \arrow[u, "F^*"]\arrow[r, "C_\beta"] & \,\,\R,
\end{tikzcd}
\begin{tikzpicture}[overlay,remember picture]
	\path (\tikzcdmatrixname-1-1) to node[midway,sloped]{$\times$}
	(\tikzcdmatrixname-1-2);
	\path (\tikzcdmatrixname-2-1) to node[midway,sloped]{$\,\times$}
	(\tikzcdmatrixname-2-2);
\end{tikzpicture}
%\vspace*{-3ex}
\end{equation*}
where the indicated vertical maps are isomorphisms. Consequently, we have an isomorphism of positive cones 
	\begin{equation*}
		\Phi_\alpha^{-1}F^*\Phi_\beta\colon K_0(A_\beta)^+\longrightarrow
	K_0(A_\alpha)^+,
	\end{equation*}
	where $\Phi_\alpha$ and $\Phi_\beta$ are defined as in \eqref{eq:phidef}.
	\end{theo}
\begin{proof}
	First, recall that $F_*$ is defined using the isomorphism $M(Y)\cong H^\tau_d(Y,\R)$, via the covariant
	functoriality of homology. For a current $c$ on $Y_\alpha$, the pushforward current $F_*c$ is defined by
	$\omega\mapsto c(F^*\omega)$. Implicit in the definition of the Ruelle-Sullivan map is a choice of orientation on $Y$; one sees that if $F$ is orientation-preserving, then $F_*$ preserves positivity of measures. In this way, we obtain a bijection of traces on $A_\alpha$ and $A_\beta$. That the vertical maps are isomorphisms follows from homotopy invariance of (co)homology
	\cite[Corollary 3.19, Proposition 3.35]{mooscho:globanalysis}.
	Let $x\in K_0(A_\beta)^+$, and set
	$\tilde{x}=\Phi_\beta(x)$, $y=\Phi_\alpha^{-1}F^*(\tilde{x})$. By symmetry, it suffices to show that $y$ belongs to
	$K_0(A_\alpha)^+$. This means that for any (positive) $\eta\in M(Y_\alpha)$, we need to show that $\tau_\eta(y)>0$. Set
$\mu=F_*\eta$. The situation can be summarised by the following commutative diagram:
\begin{equation}
\label{eq main diag}
	\,\,\,\,\,\,\,\,\,\begin{tikzcd}[column sep = huge]
		\R \arrow[-,double line with arrow={-,-}]{d} & K_0(C^*(\G_\alpha)) \arrow[l, "\tau_\eta"'] \arrow[d, "m_\alpha"'] &
		K_0(C^*({\G}_\beta)) \arrow[l, "\Phi_\alpha^{-1}F^*\Phi_\beta"'] \arrow[r, "\tau_\mu"] \arrow[d, "m_\beta"] & \R
		\arrow[-,double
		line with arrow={-,-}]{d} \\
		\R \arrow[-,double line with arrow={-,-}]{dd} & K_0(C^*(\tilde{\G}_\alpha)) \arrow[l, "\tilde{\tau}_\eta"'] \arrow[d,
		"t^{-1}_\alpha"']
		& K_0(C^*(\tilde{\G}_\beta)) \arrow[r, "\tilde{\tau}_\mu"] \arrow[d, "t^{-1}_\beta"] & \R \arrow[-,double
		line with arrow={-,-}]{dd} \\
		 & K^d(Y_\alpha) \arrow[d, "\Chern_\tau"']  & K^d(Y_\beta) \arrow[l, "F^*"'] \arrow[d, "\Chern_\tau"] & \\
		\R                                & H_\tau^{[d]}(Y_\alpha,\R) \arrow[l, "C_\eta"']           & H_\tau^{[d]}(Y_\beta,\R) \arrow[l,
		"F^*"'] \arrow[r, "C_{\mu}"]           & \,\,\R.
	\end{tikzcd}
	\vspace*{2ex}
\end{equation}
Commutativity of the left and right sides comes from Lemma \ref{lem:traces} and the index theorem for foliated spaces, Theorem \ref{thm:indthm}, together with the discussion preceding Definition \ref{def top index}. Commutativity of the top and bottom central squares follows from the definition of $\Phi$ and naturality of $\Chern_\tau$.
%	The commutativity of the centraeq main diagl section is what we discuss next. 
	By the commutativity of the left side, we have
$\tau_\eta(y)=C_\eta(\Chern_\tau(F^*\tilde{x}))$. Naturality of the Chern character implies that
\begin{equation*}\tau_\eta(y)=C_\eta(F^*(\Chern_\tau(\tilde{x})))=C_\mu(\Chern_\tau(\tilde{x})).
\end{equation*}
	By the commutativity of the right side,
	$C_\mu(\Chern_\tau(\tilde{x}))$ is equal to $\tau_\mu(x)$, which is a positive number. This completes the proof.
\end{proof}

\subsection{The twisted case}\label{subsec:twist}

The case of twisted crossed products is handled by modifying the topological side of the index formula in Theorem \ref{thm:indthm}. The details of this change are worked out in \cite[Section 3]{benmat:magzero}. As before, let us denote by $\Theta$ the $(d\times d)$ skew-symmetric matrix (with real entries) determining the twisting cocycle. 

Let $\eta$ be the $1$-form satisfying $B=d\eta$ for the closed $2$-form defined by $\Theta$ as explained below \eqref{eq:covariance}. Following the pattern in Proposition \ref{prop:ctdirbc}, we now consider the Dirac operator on $\R^d$ and perform the following twisting procedure to arrive at a suitable leafwise Dirac operator. Let $\nabla=d+i\eta$ be the connection on the trivial line bundle on $\R^d$, and $\nabla_E$ the lift to $X\times \R^d$ of a connection on a complex vector bundle $E\to Y$. Consider the twisted leafwise Dirac operator
\[
D_E=D\otimes \nabla\otimes \nabla_E\colon L^2(X\times \R^d,S^+\otimes E)\to L^2(X\times \R^d,S^-\otimes E).
\]
The analogue of the Wassermann idempotent (see \cite[above Lemma 2.5]{conmosc:nov} and \cite[Section 3.3]{benmat:magzero}) yields an element $e(D_E)\in M_2(C^*(\tilde{\G},\Theta))\otimes\cK$. Defining $P_E$ to be the degree-zero transform of $D_E$, we define the analytic twisted $\mu$-index to be 
\[
\Ind_\mu(P_E)=\tilde{\tau}_\mu([e(D_E)]-[\mathrm{diag}(0,1)]),
\]
and the associated pairing $R^\prime$ sending $\mu$ and $[E]$ to $\Ind_\mu(P_E)$.

After these adjustments, let us turn to the topological pairing $C$ from \eqref{eq C}. We focus on the case of $d$ even, since the case of $d$ odd can be reduced to this by following the considerations made in the paragraph above Proposition \ref{prop:ctdirbc}. First of all, recall that the determinant of a skew-symmetric matrix can always be written as the square of a polynomial in the matrix entries. This is a polynomial with integer coefficients (dependent only on the size of the matrix) whose value, when applied to the coefficients of a skew-symmetric matrix, is called the \emph{Pfaffian} of that matrix. The Pfaffian is nonvanishing only for $(2n \times 2n)$ skew-symmetric matrices, in which case it is a polynomial of degree $n$. 

If $I$ is a subset of $\{1,\dots,d\}$ of even cardinality, denote by $\Theta_I$ the matrix obtained from $\Theta$ by considering only rows and columns indexed in $I$, and write $\mathrm{Pf}(\Theta_I)$ for the resulting Pfaffian value. Denote also by $dx_I$ the differential form of degree $\lvert I \rvert$ on the torus $\mathbb{T}^d$, after it has been lifted to $Y$. The pairing $C$ is defined by sending $(\mu,[E])$ to the following sum:
\[
\sum_I \mathrm{Pf}(\Theta_I) \cdot C_\mu(dx_I\wedge \Chern_\tau([E])).
\]

The measured \emph{twisted} foliated index theorem, combined with an argument analogous to Corollary \ref{coro:rprimec} for simplifying the topological index formula, then gives us the identification $R^\prime= C$ that we are after. See \cite[Theorem 3.2]{benmat:magzero} for the proof of the measured twisted foliated index theorem. To summarise, we have:	

\begin{theo}\label{thm:cctwist}
We have $R^\prime = C \colon M(Y)\times K^d(Y) \to \R$.
\end{theo}

\section{Main results and applications}
\label{sec:results}
In this section we assemble the results from the previous sections and draw the main conclusions of the paper. We also present some applications of the techniques we have developed. 

\subsection{Geometric construction of the Elliott invariant}
\label{subsec:geometric Elliott}
Recall the $K$-theory map $\iota_*\colon K^d(Y^0)\to K^d(Y)$ induced by the open inclusion of the fundamental domain into the mapping torus, namely $\iota\colon X\times (0,1)^d\hookrightarrow Y$. Let $T$ be the natural pairing $H^\tau_d(Y,\R)\times K^d(Y)\to \R$, defined through the pairing $C$ from \eqref{eq C}, combined with Theorem \ref{thm:riesz}.

We define the geometric Elliott invariant as the tuple
\[
\mathrm{GEll}(A^\Theta) = (K^0(Y),\iota_*,K^1(Y),H^\tau_d(Y,\R),T).
\]

\begin{rema}
It is worth pointing out that while $\mathrm{GEll}$ is designed to be a ``classifier'' of $C^*$-algebras, its definition involves the underlying geometric structure from which the $C^*$-algebra in question arises. In particular, having included de Rham homology (rather than normalised positive currents only) in its definition, $\mathrm{GEll}$ does not correspond precisely to the standard Elliott invariant. Instead, it should be thought of as an invariant of a $C^*$-algebra for a given geometric setting. 

In a related spirit: if we assume $X$ has a smooth structure, then there exist examples where $A_\alpha\cong A_\beta$, but $C^\infty(X)\rtimes_\alpha \Z^d$ is not isomorphic to $C^\infty(X)\rtimes_\beta \Z^d$ \cite{hl:smoothsph,hl:smoothman}. In view of this, it could be interesting in the future to introduce an element of cyclic (co)homology to supplement $\mathrm{GEll}$.
\end{rema}

An isomorphism of such invariants consists of isomorphisms between the corresponding $K$-theory and homology groups, making the triangle \eqref{eq basic diagram} commute and preserving the pairing $T$, along with the Choquet simplex structure of positive currents.

\begin{theo}\label{thm:classres}
%The $C^*$-algebras of the form $A^\Theta$ are classified by the geometric Elliott invariant. 
If $\alpha,\beta \colon \Z^d\to \mathrm{Homeo}(X)$ are two free and minimal actions, then $\mathrm{GEll}(A_\alpha^\Theta)\cong \mathrm{GEll}(A^\Theta_\beta)$ implies $A_\alpha^\Theta\cong A^\Theta_\beta$ as $C^*$-algebras.
\end{theo}

\begin{proof}
% By the main result in \cite{tww:quasidiag}, every trace of $A^\Theta$ is quasidiagonal, hence \cite[Theorem B]{cetww:nucdim} implies that $A^\Theta$ has decomposition rank at most $1$. By \cite[Theorem 1.3]{win:classprod} and our hypothesis on trace separation, ordered $K$-theory suffices for classification. 
As pointed out previously, $A^\Theta$ belongs to the UCT-class because its associated groupoid is amenable \cite[Lemma 10.6]{Tu99} (this also follows from the strong Baum--Connes conjecture \cite{nestmeyer:loc, val:kthpgrp}). Amenability also implies the associated algebra is nuclear \cite{lal:nuc}. In Section \ref{sec:prelims} we have explained that our assumption on $X$ having finite covering dimension leads to $\mathcal{Z}$-stability.
Now the claim that $\mathrm{GEll}(A_\alpha^\Theta)\cong \mathrm{GEll}(A^\Theta_\beta)$ implies $A_\alpha^\Theta\cong A^\Theta_\beta$ follows from the classification statement for the usual Elliott invariant (see Theorem \ref{thm:classification} and the references given there), coupled with Proposition \ref{prop:tcob}, Lemma \ref{lem:twistunit}, Proposition \ref{prop:presordunit}, Corollary \ref{coro:rprimec}, and Theorem \ref{thm:cctwist}.
\end{proof}

If the $K$-theory of $A^\Theta$ separates traces (e.g., if $A^\Theta$ is a crossed product of the Cantor set), which is the main case of interest for this paper, we need only consider (tangential) de Rham homology as a vector space. This data is enough to recover ordered $K$-theory. This invariant is enough for classification thanks to the main result in \cite{tww:quasidiag}, \cite[Theorem B]{cetww:nucdim} (which implies $A^\Theta$ has decomposition rank at most $1$), and \cite[Theorem 1.3]{win:classprod} (based on \cite{lin:ctoprkzero,lin:asy,linniu:lift,winter:loc}).

\subsection{Noncommutative rigidity of mapping tori}
\label{subsec:rigidity}
We now prove the main theorem on noncommutative rigidity for mapping tori. Broadly speaking, the classification of $C^*$-algebras can be described as the problem of lifting (invertible) maps of Elliott invariants to $\ast$-isomorphisms of $C^*$-algebras. On the other hand, in geometry and topology, it is customary to investigate whether or not a certain homotopy equivalence of spaces can be lifted to a homeomorphism. For foliated spaces whose classical leaf space is singular, one can approach the problem of topological rigidity from a $C^*$-algebraic perspective,
%. In light of the geometric analogue of the Elliott invariant $\mathrm{GEll}$ constructed in this paper, one is led to consider the ``noncommuative'' analogue of this question,
 where instead of an isomorphism of foliated spaces, one seeks an isomorphism 
%not for a lift in the category of foliations, but rather 
of ``noncommutative foliated spaces''.

In the case of the foliated space $Y$ we have been studying, the associated ``noncommutative space'' is the foliation $C^*$-algebra $C^*(\tilde{\G})$, or $C^*(\G)$ if we allow for Morita equivalence. In this context, we have the following rigidity result:

\begin{theo}\label{thm:lift}
Any basic leafwise homotopy equivalence $Y_\alpha\cong Y_\beta$ can be lifted to $*$-isomorphisms
\[
C^*(\G_\alpha) \cong C^*(\G_\beta), \qquad C^*(\tilde{\G}_\alpha)\otimes\cK\cong C^*(\tilde{\G}_\beta)\otimes\cK
\]
of the associated foliation $C^*$-algebras.
\end{theo}
\begin{rema}
The isomorphism in Theorem \ref{thm:lift} is a ``lift'' of the given homotopy equivalence in the sense that both induce the same isomorphism $\mathrm{GEll}(C^*(\G_\alpha))\cong \mathrm{GEll}(C^*(\G_\beta))$.
\end{rema}

\begin{proof}
Let $F\colon Y_\alpha\to Y_\beta$ denote the homotopy equivalence. First we show that $F_*\colon  H^\tau_d(Y_\alpha,\R)\to H^\tau_d(Y_\beta,\R)$ induces an isomorphism of the trace simplices of the crossed products. Recall for a current $c$, the element $F_*c$ is defined as $F_*c(\omega)=c(F^*\omega)$ using the pullback of (tangential) differential forms. The trace simplex is identified with the compact convex subspace of invariant (probability) measures on $X$, viewed in the weak-$\ast$ topology of linear functionals on $C(X)$. Since the measures are invariant, it is clear this topology is determined by pointwise convergence on the coinvariants $C(X)_\alpha$, i.e. $\mu_n\to \mu$ if and only if $\mu_n([f])\to \mu([f])$ for any class in $[f]\in C(X)_\alpha$. By \cite[Theorem 2]{benoy:gap} (see also \cite[pp.~ 106-108]{mooscho:globanalysis}), there is an isomorphism $\int_\ell\colon H_\tau^d(Y,\R)\to C(X)_\alpha$ given by integration along the leaves, hence the relevant topology on de Rham homology is determined as $c_n\to c$ if and only if $c_n(\omega)\to c(\omega)$ for any $\omega\in H^d_\tau(Y,\R)$ (recall we have $c=C_\mu$ for some $\mu$ thanks to the Riesz representation theorem). Then $F_*$ is a continuous linear map in this topology, inducing a continuous bijection of transverse measures $F_*\colon M(Y_\alpha)\to M(Y_\beta)$ (cf. \cite[Proposition 4.29]{mooscho:globanalysis}). Let us check that its restriction to those currents corresponding to normalised measures (hence tracial states) is well-defined. Suppose that $\int_\ell\omega_\beta=1$ identically as a function of $Y_\beta$. Then $\omega$ is in the same tangential cohomology class as the form $\eta_\beta(x,\textbf{t})=dt^1\wedge\cdots\wedge dt^d$. Now observe that $\eta_\beta$ is (the top-degree part of) the image of $1_{C(X)}$ along the vertical map $\Chern_\tau \circ\, t^{-1}_\beta \circ m_\beta$ in diagram \eqref{eq main diag}. By naturality of $\Chern_\tau$ and the fact that $F$ is basic, $F^*\omega_\beta$ is cohomologous to $\eta_\alpha$, hence $\int_{Y}F^*\omega_\beta\,d\mu=\int_{Y}\eta_\alpha\,d\mu=1$ for any transverse probability measure $\mu$. Because of the homotopy invariance of cohomology, $F_*$ induces a continuous bijection of the trace simplices, hence a homeomorphism since they are compact and Hausdorff. Now the result follows from a combination of Theorem \ref{thm:trposcon} and Theorem \ref{thm:classres}.
\end{proof}

\subsection{Generalisations}\label{subsec:gen}

We have so far considered the case of $\Z^d$ as a discrete subgroup of $\R^d$. As we now show, most of our constructions and arguments carry over to the broader setting of discrete cocompact subgroups of simply connected solvable Lie groups.

Let $H$ be a discrete cocompact subgroup of a simply connected solvable Lie group $G$. It is well-known that $G$ is diffeomorphic to $\R^d$ \cite[Theorem 3.18.11]{var:lie}. Since $H$ acts freely and properly on $G$ (by left translation), we can take $EH=G$, i.e. $G$ is a model for the classifying space of $H$-bundles. Let $X$ be a space equipped with a free and minimal $H$-action, with homotopy quotient $Y=X\times_H G$.

Since solvable groups (along with their closed subgroups) are amenable \cite[Chapter 4]{tulliocoor:auto}, we have that $A=C(X)\rtimes H\cong C(X)\rtimes_r H$ is nuclear \cite{lal:nuc}, simple, unital, and belongs to the UCT-class (see \cite[Lemma 10.6]{Tu99} or \cite[Proposition 9.5]{nestmeyer:loc}). The groupoids $\G=H\ltimes X$ and $\tilde{\G}=G\ltimes Y$ are Morita equivalent by the same ``reduction-to-transversal'' argument that underlies Lemma \ref{lem:morita} (see \cite[Examples 2.6 \& 2.7]{murewi:morita} and \cite[Section 3]{put:spiel} for details). Interpreting $Y$ as the induction space $Y=\Ind_H^G(X)$, the analytic counterpart of this Morita equivalence is the stable isomorphism
\[
C(X)\rtimes H \otimes \cK \cong C_0(\Ind_H^G(X))\rtimes G \otimes \cK
\]
obtained via the standard Mackey-Green-Rieffel machine \cite[Section 3 \& Appendix~B]{ekqr:cat}.

As $G$ is contractible, $\tilde{\G}$ is the holonomy groupoid associated to the foliation $Y$ \cite[p.~44]{mooscho:globanalysis}. The leaves in $Y$ correspond to the $G$-orbits. Since $Y$ is a fibre bundle over the compact space $G/H$ with compact fibres (hence compact), the foliated index theorem holds for $(X,H,G)$ just as it did for $(X,\Z^d,\R^d)$.

Since $G$ is diffeomorphic to $\R^d$, we also have Bott periodicity. This allows us to characterise the ``inclusion of the fundamental domain'' in a way analogous to
%(for the $H$-action on $G$) can still be characterised as in 
Lemma \ref{lem:fac}. In other words, we can map $[1]\in K_0(C(X))$ via the composition of maps 
\begin{equation}
%\label{eq:chain}
\iota^G\colon K_0(C(X))\cong K_d(C_0(X\times G)) \to K_d(C_0(X\times G)\rtimes H) \cong K_d(C(Y)),
\end{equation}
where the first is defined by Bott periodicity, the second is induced by inclusion at $1\in H$, and the third is Rieffel's Morita equivalence for the free and proper $H$-action on $X\times G$. In this more general setting, on the basis of Definition \ref{def:basic}, we define a map $F\colon Y_\alpha\to Y_\beta$ to be basic by considering the following (not necessarily commutative) triangle,
% where the mapping in \eqref{eq:chain} appears twice (once for $\alpha$ and once for $\beta$).
\begin{equation*}
%\label{eq basic diagram}
	\xymatrix{K_0(C(X)) \ar[r]^-{\iota^G_{\beta}} \ar[dr]_-{\iota^G_{\alpha}} & K_d(C(Y_\beta)) \ar[d]^-{F^*} \\
	& K_d(C(Y_\alpha)),}
\end{equation*}
and requiring $F^* \iota^G_\beta[1] = \iota^G_\alpha[1]$.

Since $G$ is a simply connected solvable Lie group, any crossed product involving $G$ is isomorphic to an iterated crossed by $\R$ \cite[Examples 10.1.1~(c)]{black:kth}. This is a generalisation of the fact that group extensions give rise to twisted crossed product $C^*$-algebras of the form $C^*(N)\rtimes_\sigma Q$ (and if $Q=\R$ then we can ``untwist''). This means that we have a corresponding version of Connes' Thom isomorphism: for any separable $G$-$C^*$-algebra $B$, there is a $\KKK$-equivalence between $B\rtimes G$ and $S^d B$ \cite{skandalis:fack}.

With this in mind, we have an isomorphism 
$$K_*(C(X)\rtimes H)\cong K^{*+d}(Y).$$ 
The main arguments we relied on, which were based on Morita equivalence and the foliation index theorem, also carry over to this more general setting. In particular, we have direct analogues of Proposition \ref{prop:presordunit} and Corollary \ref{coro:rprimec}, and this allows us to construct the geometric Elliott invariant $\mathrm{GEll}$ in this greater generality. 

We may summarise this discussion as follows.

\begin{theo}\label{thm:classgeneral}
Let $H$ and $G$ be as above. Let $\alpha,\beta \colon H\to \mathrm{Homeo}(X)$ be two free and minimal actions such that the corresponding crossed products absorb the Jiang-Su algebra $\mathcal{Z}$. Then $\mathrm{GEll}(A_\alpha)\cong\mathrm{GEll}(A_\beta)$ implies $A_\alpha\cong A_\beta$ as $C^*$-algebras.
\end{theo}

For example, in the case that $(H,G)$ is a pair where $G$ is a nilpotent Lie group and $H$ is a finitely
generated, infinite, nilpotent subgroup, Theorem \ref{thm:classgeneral} applies by virtue of \cite[Corollary 7.6]{gwz:rokhlin} and the proof of $\mathcal{Z}$-stability as a consequence of finite nuclear dimension. A specific example is given by $(H,G)=(H_3(\Z),H_3(\R))$, the pair consisting of the discrete and continuous Heisenberg groups.

On the other hand, there exist many pairs $(H,G)$ where $G$ is simply connected and non-nilpotent. Under the Bianchi classification of solvable
non-nilpotent Lie groups, the group $6_0$ (the isometry group of $\mathrm{Solv}$ geometry as in Thurston's classification of model
geometries) has cocompact lattices that correspond
to compact Riemannian $3$-manifolds whose universal cover is isometric to $\mathrm{Solv}$ geometry \cite[Section 5]{bock:solv}. Similarly, the group $7_0$ (a subgroup of the full isometry group of Euclidean geometry) admits as cocompact lattices the $3$-dimensional crystallographic groups.

\subsection{Further applications}
\label{subsec:app}
We give two more applications of the theory developed in this paper. The first gives a sufficient condition under which the $C^*$-algebra $A$ is classified by a simpler invariant, namely ordered $K$-theory.

Recall the tangential Chern character $\Chern_\tau$ from \eqref{eq Chern tau}. Note that in general, rationalising this map does not lead to an isomorphism.

\begin{prop}
\label{prop K thy sep traces}
Suppose the map 
\begin{equation}\label{eq:rchcar}
\Chern_\tau\otimes 1 \colon K^d(Y)\otimes \R \to H_\tau^{[d]}(Y,\R)
\end{equation}
associated to the tangential Chern character is surjective. Then the $K$-theory group $K_0(A)$ separates traces.
\end{prop}
\begin{proof}
Let $\tau_\mu,\tau_\eta$ be two distinct traces not distinguished by $K_0(A)$. With notation as in diagram \eqref{eq main diag} in the proof of Theorem \ref{thm:trposcon}, the map $(m\circ t^{-1} \circ \Chern_\tau)\otimes \,\mathrm{id}_\R$ is surjective by hypothesis. Hence the index theorem (i.e., the commutativity of both sides of diagram \eqref{eq main diag}) implies that $C_\mu=C_\eta$ as elements of $H^\tau_d(Y,\R)$. Now 
%the Riesz representation 
Theorem \ref{thm:riesz} implies that $\mu=\eta$, which is a contradiction.
\end{proof}

Let us make two remarks on the hypothesis in Proposition \ref{prop K thy sep traces}. The first is that in order to obtain the conclusion, we only need the map in \eqref{eq:rchcar} to be a surjection onto top-degree cohomology. Second, since the \emph{ordinary} Chern character $\Chern$ is an isomorphism after tensorization by $\R$, it follows from the discussion in Section \ref{subsec:steptwo}
%$\Chern \otimes 1 \colon K^d(Y)\otimes \R \to H^{[d]}(Y,\R)$
that the hypothesis is equivalent to surjectivity of the map $r_*\colon H^d(Y,\R)\to H_\tau^d(Y,\R)$ discussed there.

\begin{rema}
When $X$ is a Cantor set, the map $r_*\colon H^d(Y,\R)\to H_\tau^d(Y,\R)$ is identified with the map between coinvariants $C^\infty(X,\R)_\alpha \to C(X,\R)_\alpha$ whose image is dense (see \cite[Corollary 4.2]{ros:gaplab}; here ``smooth'' is understood as locally constant). If in addition there is a unique $\alpha$-invariant probability measure on $X$ (the uniquely ergodic case), then $r_*$ is a surjection (see \cite[Remark 5.4]{ros:gaplab}).
\end{rema}

\begin{theo}\label{thm:class}
$C^*$-algebras of the form $A=C(X)\rtimes \Z^d$ are classified by the ordered $K$-theory invariant
\[
(K_0(A),K_0^+(A),[1_{A}],K_1(A))
\]
whenever the map in Eq. \eqref{eq:rchcar} is surjective.
\end{theo}
\begin{proof}
This follows from \cite[Theorem 1.3]{win:classprod} (which is based on \cite{lin:ctoprkzero,lin:asy,linniu:lift,winter:loc}) together with Proposition \ref{prop K thy sep traces}.
\end{proof}

Now we apply Theorem \ref{thm:classres} to the gap-labelling problem for Cantor minimal systems \cite{bel:hull}, in particular to clarify the scope of applicability of existing results in the literature.

First let us provide some context for the problem and its relation to noncommutative geometry. The term ``gap-labelling'' comes from solid-state physics, in particular the study of spectral gaps of Hamiltonian operators describing the motion of conduction electrons inside a given disordered material (e.g. a quasi-crystal). The ``labels'' are the trace values associated to projections onto spectral values and have a physically meaningful interpretation as values of the \emph{integrated density of states}, which, roughly speaking, measure the number of available energy levels per unit volume. From a physical viewpoint, formulating the problem using traces and $K$-theory is useful because it implies that the gap labels are invariant under norm-resolvent perturbation of the Hamiltonian, which in turn yields \emph{sum} and \emph{conservation rules} as the dynamics are perturbed. Roughly speaking, these rules underlie formulas for transitions between energy levels that regulate absorption and emission of electromagnetic radiation.

This mathematical framework and its physical ramifications are broadly described as ``noncommutative Bloch theory'', a program initiated by Bellissard \cite{bel:gap} and which has the gap-labelling conjecture as a fundamental statement.

Modulo a few simplifications and identifications, a $C^*$-algebra of the type studied in this paper is proposed as the \emph{$C^*$-algebra of observables} for the physical system under consideration. Due to the disordered nature of the material, and its mathematical modeling through tiling systems \cite{kell:put}, the Cantor set appears in the form of the space $X$ in this paper, and we are led to consider the crossed product $C(X)\rtimes \Z^d$ associated to a free and minimal action. The addition of a constant magnetic field leads to the \emph{magnetic} gap-labelling problem. This corresponds to taking a twisted crossed product with respect to a group $2$-cocycle, as discussed in Section \ref{sec:prelims} of this paper. 

Let us focus first on the untwisted case. Fix a $\Z^d$-invariant probability measure $\mu$ on the Cantor set $X$ (often assumed to be ergodic in physical situations). We obtain a dual tracial state $\tau_\mu$ on the crossed product algebra $A=C(X)\rtimes \Z^d$. The problem consists of proving the following equality between subsets of the real numbers:
\begin{equation}\label{eq:gap}
\tau_\mu (K_0(A)) = \mu_*(K_0(C(X))).
\end{equation}

It is routine to compute $K_0(C(X))\cong C(X,\Z)$ (while the odd $K$-group is trivial), so the right-hand side of \eqref{eq:gap} is also denoted by $\Z[\mu]$ -- the group of integer linear combinations of measures of clopen subsets of $X$. Following \cite{benmat:mag}, let us consider the case where the mapping torus associated to $(X,\Z^d)$ is a \emph{principal solenoidal torus}. In \cite{benmat:mag}, the authors were able to prove \eqref{eq:gap} (in the magnetic case) using index theory on foliated spaces with the favourable topology of solenoidal tori. 

A space $Y$ is a principal solenoidal torus if it is an inverse limit of tori $Y_j=\T^d$, where the connecting maps $f_{j+1}\colon Y_{j+1}\to Y_j$ are Galois covers of degree greater than one (for example, $f_j(z_1,\dots,z_d)=(z_1^{p_1},\dots,z_d^{p_d})$, where $p_i\in\N$, $p_i>1$). Recall that $f_j$ is called Galois (or regular or normal) if the group $\Aut(f_j)$ of deck transformations acts transitively on each fibre. 

Note that $\Aut(f_j)$ always acts freely, hence $f_j$ is a principal $G$-bundle where $G=\Aut(f_j)$ is considered as a discrete group. Let $p_j\colon X_j\to X_1=\T^d$ be the canonical map. Then $p_j$ is a (finite) Galois cover, and in particular the group $\Aut(p_j)$ is isomorphic to a quotient of the fundamental group $\pi_1(Y_1)\cong \Z^d$. Let $\Aut(p_j)=\Z^d/\Gamma_j$. The groups $\{G_j=\Z^d/\Gamma_j\}_j$ form an inverse system whose inverse limit we denote by $\Omega$.  Note that $\Omega \to Y \to Y_1$ is a principal $\Omega$-bundle.

One may consider the following problem: \emph{characterise those Cantor minimal systems whose associated homotopy quotient is a principal solenoidal torus}. This question helps clarify the range of applicability of the main result in \cite{benmat:mag}. Since the gap-labelling conjecture is expressed in terms of $C^*$-algebras, the hypothesis on the principal solenoidal torus may appear to be somewhat unnatural, since it involves the homotopy quotient $Y$, which is only indirectly related to the problem. The results of the present paper are useful in precisely this situation, as they provide a way to translate between properties of the homotopy quotient and properties of the crossed product.

\begin{prop}\label{prop:exgap}
Let $(X,\Z^d,\alpha)$ be a Cantor system as above. Suppose $Y_\alpha$ is isomorphic to a principal solenoidal torus (as a fibre bundle over the torus). Then the $C^*$-algebra $A=C(X)\rtimes \Z^d$ is isomorphic to a $\Z^d$-odometer system crossed product.
\end{prop}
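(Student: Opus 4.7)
The plan is to build an explicit $\Z^d$-odometer $(\Omega,\Z^d,\beta)$ whose mapping torus is bundle-isomorphic to $Y_\alpha$, and then to apply Theorem \ref{thm:classres}. By assumption we can write $Y_\alpha = \varprojlim Y_j$ with $Y_j = \T^d$ and Galois connecting maps $f_{j+1}\colon Y_{j+1}\to Y_j$ of degree at least two; set $G_j = \Aut(p_j) = \Z^d/\Gamma_j$, take $\Omega = \varprojlim G_j$, and let $\beta\colon \Z^d \to \mathrm{Homeo}(\Omega)$ be the translation action induced by the diagonal inclusion $\Z^d \hookrightarrow \Omega$. Freeness of $\beta$ reduces to $\bigcap_j \Gamma_j = 0$, which holds because the fibres of $Y_\alpha \to \T^d$ are totally disconnected, while minimality follows from the density of $\Z^d$ inside $\Omega$. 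This identifies $(\Omega,\Z^d,\beta)$ as a bona fide $\Z^d$-odometer.

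Next I would identify the mapping torus $Y_\beta = \Omega \times_{\Z^d} \R^d$ with $Y_\alpha$ as a bundle over $\T^d$. Because $(-)\times_{\Z^d}\R^d$ commutes with cofiltered limits of finite $\Z^d$-sets, one has
\[
Y_\beta \;\cong\; \varprojlim\bigl((\Z^d/\Gamma_j)\times_{\Z^d}\R^d\bigr) \;\cong\; \varprojlim(\R^d/\Gamma_j) \;\cong\; \varprojlim Y_j \;\cong\; Y_\alpha,
\]
with each identification commuting with projection to $\T^d = Y_1$. This produces a fibre bundle isomorphism $F\colon Y_\alpha \xrightarrow{\sim} Y_\beta$, whose restriction to fibres gives a homeomorphism $X \cong \Omega$ identifying the two underlying Cantor spaces.

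The final step is to match geometric Elliott invariants. By Proposition \ref{prop:either} the bundle isomorphism $F$ is basic, and Proposition \ref{prop:presordunit} then yields an order-unit-preserving isomorphism $F^*\colon K_0(A_\beta) \to K_0(A_\alpha)$. Being a homeomorphism, $F$ induces matching isomorphisms on $K^*(Y)$ and on $H^\tau_d(Y,\R)$ compatible with the pairing $T$ by naturality along diagram \eqref{eq main diag}. Since $X$ is a Cantor space, the positive cone on $K_0$ is determined by traces (Phillips' theorem, as recalled after the Assumption in Section \ref{sec:prelims}), and $F_*$ sets up the required bijection of invariant transverse measures via Ruelle--Sullivan. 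These data assemble into an isomorphism $\mathrm{GEll}(A_\alpha) \cong \mathrm{GEll}(A_\beta)$, so Theorem \ref{thm:classres} delivers $A_\alpha \cong C(\Omega)\rtimes\Z^d$, which is a $\Z^d$-odometer crossed product by construction. The main subtlety lies in matching both sides of the geometric invariant across the initially distinct Cantor spaces $X$ and $\Omega$; the identification produced by $F$ in the second paragraph is what allows Proposition \ref{prop:presordunit} to apply, and orientation-compatibility of $F$ is automatic because the standard orientation on $\R^d$ descends consistently to tangential volume forms on both mapping tori.
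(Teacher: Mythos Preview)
Your proof is correct and follows essentially the same route as the paper's: construct the odometer $(\Omega,\Z^d,\beta)$, identify its mapping torus $Y_\beta$ with the given solenoidal torus $Y_\alpha$ as a fibre bundle over $\T^d$, invoke Proposition \ref{prop:either} to conclude that the bundle isomorphism is basic, and then feed this into the classification machinery. The only difference is packaging: the paper cites Theorem \ref{thm:lift} directly, whereas you unpack that theorem into its constituents (Proposition \ref{prop:presordunit}, the naturality of diagram \eqref{eq main diag}, and Theorem \ref{thm:classres}). Your explicit verification that $\beta$ is free and minimal, and your inverse-limit computation of $Y_\beta$, are things the paper leaves implicit in the text preceding the proposition.

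One point worth making explicit, which both you and the paper elide: in order to apply Theorem \ref{thm:lift} (or equivalently to pull back tangential forms and push forward transverse measures along diagram \eqref{eq main diag}), the map $F$ must be \emph{leaf}-preserving, not merely fibre-preserving. A generic bundle isomorphism over $\T^d$ need not respect the foliation. In the present situation this is automatic, because the fibre $X$ is totally disconnected and hence the leaves of $Y_\alpha$ and $Y_\beta$ coincide with their path components; any homeomorphism therefore preserves leaves. Your last sentence about orientation-compatibility is correct once this is observed, since $F$ then covers the identity on $\T^d$ leafwise and restricts to a translation on each leaf $\cong\R^d$.
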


Note that Proposition \ref{prop:exgap} does not characterise the dynamical system \emph{per se}, but rather the $C^*$-algebra it generates. It provides a partial answer to \cite[Problem 1]{benmat:mag}.

Given a principal solenoidal torus $X$, there is a natural action $\beta\colon\Z^d \to \mathrm{Homeo}(\Omega)$ given by coset translation on each group $G_j$, and the resulting system $(\Omega,\Z^d,\beta)$ is called a \emph{$\Z^d$-odometer} \cite{gps:zdodom}. The $C^*$-algebra $C(\Omega)\rtimes \Z^d$ belongs to a more general class of $C^*$-algebras known as \emph{generalised Bunce-Deddens algebras} \cite{orf:genbd}, and in particular it is a classifiable $C^*$-algebra of real rank zero, stable rank one, with comparability of projections and a unique trace. 

\begin{proof}
By construction, $Y_\beta$ is isomorphic to $Y_\alpha$. The claim then follows directly from Proposition \ref{prop:either} and Theorem \ref{thm:lift}, noting that separation of traces is automatic for a Cantor set.
\end{proof}

This result could be viewed as clarifying the scope of applicability of the main theorem in \cite{benmat:mag}. Having characterised which crossed products are involved, it is easy to prove that the associated systems of the form $(\Omega,\Z^d,\beta)$ satisfy the gap-labelling conjecture by a direct argument. This is most likely well-known to experts, but we provide a brief argument here for completeness.

\begin{prop}\label{eq:concgap}
Equation \eqref{eq:gap} holds for the odometer system $(\Omega,\Z^d,\beta)$.
\end{prop}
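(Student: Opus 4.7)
The plan is to prove \eqref{eq:gap} for the $\Z^d$-odometer $(\Omega, \Z^d, \beta)$ by establishing both inclusions; the non-trivial direction is $\tau_\mu(K_0(A)) \subseteq \mu_*(K_0(C(\Omega)))$. The forward inclusion is immediate: for any projection $p \in M_k(C(\Omega)) \subseteq M_k(A)$, Lemma \ref{lem:tracemeas} yields $\tau_\mu([p]) = \int_\Omega \mathrm{Tr}(p)\, d\mu = \mu_*([p])$, so $\mu_*(K_0(C(\Omega)))$ sits inside $\tau_\mu(K_0(A))$.

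For the reverse inclusion, I would exploit the inverse limit structure. Writing $\Omega = \varprojlim G_j$ with $G_j = \Z^d/\Gamma_j$ and $\Gamma_{j+1} \subseteq \Gamma_j$ yields a direct limit $A = \varinjlim A_j$, where $A_j = C(G_j) \rtimes \Z^d$; continuity of $K_0$ under inductive limits reduces the task to controlling $\tau_\mu(K_0(A_j))$ level by level. Since $\Z^d$ acts transitively on the finite set $G_j$ with stabiliser $\Gamma_j \cong \Z^d$, Green's imprimitivity theorem identifies
\[
A_j \cong \cK(\ell^2(G_j)) \otimes C^*(\Gamma_j) \cong M_{N_j}(C(\T^d)), \qquad N_j = \lvert G_j \rvert.
\]

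The key computation is then the range of $\tau_\mu|_{A_j}$ on $K_0(A_j)$. Since $\mu$ restricts to the uniform measure $1/N_j$ on $G_j$, formula \eqref{eq:dualtrace} shows that under the above identification $\tau_\mu|_{A_j}$ corresponds to the tracial state $\tfrac{1}{N_j}\mathrm{Tr}_{N_j} \otimes \tau_{\mathrm{Haar}}$ on $M_{N_j}(C(\T^d))$. For any projection $p$ in the stabilisation $M_{kN_j}(C(\T^d))$, the rank function $z \mapsto \mathrm{Tr}(p(z))$ is locally constant, hence constant by connectedness of $\T^d$, so the trace evaluates to $r/N_j$ for some integer $r$. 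Therefore $\tau_\mu(K_0(A_j)) = \tfrac{1}{N_j}\Z$.

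Passing to the direct limit,
\[
\tau_\mu(K_0(A)) = \bigcup_j \tau_\mu(K_0(A_j)) = \bigcup_j \tfrac{1}{N_j}\Z = \Z[\mu] = \mu_*(K_0(C(\Omega))),
\]
where the last equality holds because $K_0(C(\Omega))$ is generated by characteristic functions of basic clopen cylinders $\{\omega_j = a\}$, each of $\mu$-measure $1/N_j$, and the divisibility $N_j \mid N_{j+1}$ makes the chain of ranges exhaust $\Z[\mu]$. The one point needing care is the compatibility of $\tau_\mu$ with the connecting maps $A_j \hookrightarrow A_{j+1}$, but this is immediate since $\tau_\mu$ is computed by integrating the restriction to $\Omega \subseteq C^*(\G)^{(0)}$ on both sides. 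I do not expect any deep obstacle, as the explicit profinite description of the odometer makes $K$-theory and trace values entirely transparent.
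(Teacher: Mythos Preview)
Your proposal is correct and follows essentially the same route as the paper: both arguments use the inductive limit decomposition $A=\varinjlim A_j$ with $A_j=C(G_j)\rtimes\Z^d$, invoke Green's imprimitivity/Morita equivalence to identify $A_j$ with (matrices over) $C^*(\Gamma_j)\cong C(\T^d)$, observe that the dual trace corresponds to the canonical trace scaled by $1/N_j$, and conclude via integrality of the latter. Your write-up is somewhat more explicit (e.g.\ the connectedness argument for integrality and the identification of $\Z[\mu]$), but the strategy is the same.
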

\begin{proof}
Since $\beta$ is defined as the ``limit'' of an action on each group $G_j$ (viewed as a discrete space), the crossed product $C^*$-algebra $C(\Omega)\rtimes \Z^d$ is easily seen to be isomorphic to an inductive limit of algebras $A_j= C(G_j)\rtimes \Z^d \cong C^*(\Z^d/\Gamma_j)\rtimes \Z^d$. We deduce a standard Morita equivalence between $A_j$ and $C^*(\Gamma_j)$, and it is routine to check that this Morita equivalence transports the dual trace on $A_j$ to the canonical tracial state on $C^*(\Gamma_j)\cong C(\mathbb{T}^d)$, scaled by a factor of $1/[\Z^d\colon\Gamma_j]$. It is well-known that this state takes integer values. 
As the Haar measure on $\Omega$ is induced by the unique invariant measures $\mu_j$ on each $G_j$, the equality \eqref{eq:gap} follows.
\end{proof}

Note that the integrality aspect of the previous proof holds as long as each $\Gamma_j$ is a torsion-free group satisfying the Baum--Connes conjecture (see for example \cite{valjens:misb}). Hence the equality \eqref{eq:gap} holds for generalised Bunce-Deddens algebras beyond the odometer type.

The presence of a non-zero magnetic field is accompanied by the introduction of a $\Z^d$-cocycle determined by a skew-symmetric matrix $\Theta$, as discussed in Section \ref{sec:prelims}. Then  Proposition \ref{prop:exgap} still holds, replacing the odometer $C^*$-algebras with a twisted analogue. More precisely, the $C^*$-algebra $A^\Theta$ is isomorphic to an inductive limit of algebras $A_j$, each of which is Morita equivalent to $C^*_\Theta(\Gamma_j)$, i.e. a higher dimensional noncommutative torus.

In order to discuss the twisted analogue of Proposition \ref{eq:concgap}, one needs to modify the right-hand side of \eqref{eq:gap} by introducing the Pfaffian of $\Theta$ (see \cite{benmat:magzero} for more details). This should come as no surprise in view of the twisted foliated index theorem discussed in Subsection \ref{subsec:twist}. The proof of Proposition \ref{prop:exgap} in the twisted setting proceeds by replacing integrality of the standard trace on $C^*(\Gamma_j)$ with the computation of trace values on noncommutative tori in \cite{ell:proj}. 

\medskip

Having mentioned noncommutative tori, we conclude this paper with a simple result concerning a type of topological rigidity of the \emph{Kronecker foliation}. Let $\Z$ act on $X=S^1$ by an irrational rotation of angle $2\pi\theta$, where $\theta\in [0,1]$. The crossed product $C(X)\rtimes \Z$ is then a noncommutative torus, also known as the irrational rotation algebra $A_\theta$. This algebra is Morita equivalent to the Kronecker foliation $C^*$-algebra, namely the holonomy groupoid $C^*$-algebra associated to the foliation on the torus $\mathbb{T}^2$ whose foliation bundle is
\[
F\mathbb{T}^2 = \{ v\in T\mathbb{T}^2 \mid \omega(v) = 0\},
\]
where $\omega=a_1dx-a_2dy$ is a smooth $1$-form with $a_1/a_2 = \tan(2\pi\theta)$. The isomorphism class of $A_\theta$ is well-understood \cite{rieffel:nc}, and in particular $A_\theta\cong A_\zeta$ if and only if $\theta=\zeta$, or $\theta=1-\zeta$. Each leaf sits densely in the torus and corresponds to an ``irrational flow''. 

In this case, the mapping torus $Y$ associated to $(X,\Z)$, viewed as a foliated space, is isomorphic to the $2$-torus $(\mathbb{T}^2,\theta)$ foliated by the irrational flow. Then an application of Theorem \ref{thm:lift} gives the following:

\begin{coro}
There is no basic leafwise homotopy equivalence between $(\mathbb{T}^2,\theta)$ and $(\mathbb{T}^2,\zeta)$ unless $\theta=\zeta$ or $\theta=1-\zeta$.
\end{coro}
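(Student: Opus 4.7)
The plan is to apply Theorem \ref{thm:lift} directly to the situation of the Kronecker foliation and then invoke Rieffel's classification of irrational rotation algebras. The setup is that the mapping torus $Y_\theta$ associated to the free and minimal action of $\Z$ on $S^1$ by rotation of angle $2\pi\theta$ is precisely the foliated torus $(\mathbb{T}^2,\theta)$, and the associated crossed product $C^*$-algebra $C^*(\G_\theta) = C(S^1)\rtimes_\theta \Z$ is the irrational rotation algebra $A_\theta$. Since $S^1$ has covering dimension $1$, the standing finite-dimensionality assumption is satisfied, so Theorem \ref{thm:lift} applies.

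The argument then proceeds by contrapositive. Suppose $F\colon(\mathbb{T}^2,\theta)\to(\mathbb{T}^2,\zeta)$ is a basic leafwise homotopy equivalence. By Theorem \ref{thm:lift}, $F$ lifts to a $*$-isomorphism of the transformation groupoid $C^*$-algebras
\[
C^*(\G_\theta)\cong C^*(\G_\zeta),
\]
which is precisely an isomorphism $A_\theta\cong A_\zeta$ of irrational rotation algebras. By Rieffel's classification of noncommutative tori \cite{rieffel:nc}, the latter forces $\theta=\zeta$ or $\theta=1-\zeta$, contradicting the hypothesis that neither equality holds. Hence no such $F$ can exist.

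The main (and essentially only) conceptual point to verify is the identification $Y_\theta\cong(\mathbb{T}^2,\theta)$ as foliated spaces, so that a basic leafwise homotopy equivalence in the sense of the text agrees with one in the geometric sense of the Kronecker foliation; this has already been noted in the paragraph preceding the corollary. All remaining ingredients — freeness and minimality of the irrational rotation, the coincidence of holonomy and transformation groupoids in this contractible-leaf setting (Subsection \ref{subsec:basic}), and the invocation of Rieffel's theorem — are standard. There is no real obstacle: the entire content of the corollary is to observe that the noncommutative rigidity of Theorem \ref{thm:lift} specialises in the simplest possible foliation-theoretic example to a classical algebraic classification.
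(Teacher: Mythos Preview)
Your proposal is correct and follows precisely the approach intended by the paper: apply Theorem \ref{thm:lift} to lift a basic leafwise homotopy equivalence to a $*$-isomorphism $A_\theta\cong A_\zeta$, and then invoke Rieffel's classification \cite{rieffel:nc} to force $\theta=\zeta$ or $\theta=1-\zeta$. The paper leaves the proof implicit in the sentence ``Then an application of Theorem \ref{thm:lift} gives the following'', having already recorded both the identification $Y_\theta\cong(\mathbb{T}^2,\theta)$ and Rieffel's result in the preceding paragraph; you have simply made these steps explicit.
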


%*****************************************************************
% Ending
%******************************************************************
\addtocontents{toc}{\SkipTocEntry}\section*{Declarations}
\addtocontents{toc}{\SkipTocEntry}\subsection*{Funding}
The second author acknowledges support by: Foreign Young Talents' grant QN2021137002L (National Natural Science Foundation of China), CREST Grant Number JPMJCR19T2 (Japan Science and Technology Agency of the Ministry of Education, Culture, Sports, Science and Technology), Marie Skłodowska-Curie Individual Fellowship (Horizon Europe, European Commission, Project No.~101063362).
\addtocontents{toc}{\SkipTocEntry}\subsection*{Data availability statement}
This manuscript has no associated data.
\addtocontents{toc}{\SkipTocEntry}\subsection*{Competing interests}
All authors certify that they have no affiliations with or involvement in any organization or entity with any financial interest or non-financial interest in the subject matter or materials discussed in this manuscript.

\backmatter

%\input{BeginEnd/Bibliography}
%*******************************************************
% Bibliography
%*******************************************************
%\nocite{*}
%\printbibliography
\bibliographystyle{amsplain-init-nodash}
\bibliography{BibliographyBST}   % biblatex database
\end{document}